\theoremstyle{plain}
\newtheorem{thm}{Theorem}[section]
\newtheorem{cor}[thm]{Corollary}
\newtheorem{lem}[thm]{Lemma}
\newtheorem{prop}[thm]{Proposition}
\theoremstyle{definition}
\newtheorem{defn}[thm]{Definition}
\theoremstyle{remark}
\newtheorem{obs}[thm]{Remark}
\numberwithin{equation}{section}
\newcommand{\average}{{\mathchoice {\kern1ex\vcenter{\hrule height.4pt
width 6pt depth0pt} \kern-9.7pt} {\kern1ex\vcenter{\hrule
height.4pt width 4.3pt depth0pt} \kern-7pt} {} {} }}
\newcommand{\R}{\mathbb R}
\newcommand{\Z}{\mathbb Z}
\newcommand{\p}{\partial}
\begin{document}

\title[Optimal regularity for supercritical parabolic obstacle problems]{Optimal regularity for supercritical\\ parabolic obstacle problems}
\author{Xavier Ros-Oton}
\address{ICREA, Pg.\ Llu\'is Companys 23, 08010 Barcelona, Spain \&\newline\indent
Universitat de Barcelona, Departament de Matem\`atiques i Inform\`atica, Gran Via de les Corts Catalanes 585, 08007 Barcelona, Spain.}
\email{xros@icrea.cat}
\author{Dami\`a Torres-Latorre}
\address{Universitat de Barcelona, Departament de Matem\`atiques i Inform\`atica, Gran Via de les Corts Catalanes 585, 08007 Barcelona, Spain.}
\email{\tt dtorres-latorre@ub.edu}

\begin{abstract}
We study the obstacle problem for parabolic operators of the type $\p_t + L$,
where $L$ is an elliptic integro-differential operator of order $2s$, such as $(-\Delta)^s$, in the supercritical regime $s \in (0,\frac{1}{2})$. The best result in this context was due to Caffarelli and Figalli, who established the $C^{1,s}_x$ regularity of solutions for the case $L = (-\Delta)^s$, the same regularity as in the elliptic setting.

Here we prove for the first time that solutions are actually \textit{more} regular than in the elliptic case. More precisely, we show that they are $C^{1,1}$ in space and time, and that this is optimal. We also deduce the $C^{1,\alpha}$ regularity of the free boundary. Moreover, at all free boundary points $(x_0,t_0)$, we establish the following expansion:
$$(u - \varphi)(x_0+x,t_0+t) = c_0(t - a\cdot x)_+^2 + O(t^{2+\alpha}+|x|^{2+\alpha}),$$
with $c_0 > 0$, $\alpha > 0$ and $a \in \R^n$.
\end{abstract}

\thanks{XR and DT have received funding from the European Research Council (ERC) under the Grant Agreement No 801867.}
\subjclass{35R35, 35B65}
\keywords{Obstacle problem, fractional Laplacian, free boundary.}
\maketitle

\section{Introduction}\label{sect:intro}
\addtocontents{toc}{\protect\setcounter{tocdepth}{1}}

The aim of this paper is to study the parabolic obstacle problem
\begin{equation}\label{eq:prev_problem}
\left\{
\begin{array}{rclll}
     \min\{\p_tu+Lu, u-\varphi\} & = & 0 & \text{in} & \R^n \times (0,T)\\
     u(\cdot,0) & = & \varphi & \text{in} & \R^n,
\end{array}\right.
\end{equation}
for nonlocal operators of the form \begin{equation}\label{eq:operator}
    Lu(x) = \int_{\R^n}\big(u(x) - u(x+y)\big)K(y)\mathrm{d}y.
\end{equation}
The kernel $K$ is even and satisfies the uniform ellipticity condition
\begin{equation}\label{eq:operator_elliptic}
    \lambda|y|^{-n-2s} \leq K(y) \leq \Lambda|y|^{-n-2s}, \quad K(y) = K(-y),
\end{equation}
for some $0 < \lambda \leq \Lambda$ and $s \in (0,1)$. We define the contact set $\{u = \varphi\}$ and the free boundary $\p\{u > \varphi\}$.

We are mostly interested on studying the \emph{supercritical} case, $s \in (0,\frac{1}{2})$, in which the higher order term is the time derivative instead of the diffussion term. This will give rise to a somewhat unusual approach to the problem, as well as some surprising results.


Nonlocal operators arise naturally when one considers jump-diffussion processes. One of the most classical motivations is the modelling of stock prices, because the nonlocality takes into account the possible large fluctuations of the market. In the trading of options on financial markets, the valuation of American options is an optimal stopping problem. Thus, when the underlying asset price follows a jump-diffussion process, we are led naturally to the parabolic obstacle problem (\ref{eq:prev_problem}); see \cite{CT04, CF13} for details. These models were first introduced in the 1970s by Nobel prize winner R. Merton \cite{Mer76}, and have been used for many years \cite{Sch03,CT04,OS07}.

\subsection{The elliptic case}
From the mathematical point of view, elliptic and parabolic equations involving jump-diffussion operators have been an active and successful field of research in the past two decades, coming from PDE and from Probability.

The first nonlocal operator of this type to be studied was the fractional Laplacian,
$$(-\Delta)^su(x) = c_{n,s}\int_{\R^n}\frac{u(x)-u(x+y)}{|y|^{n+2s}}\mathrm{d}y,$$
and problems involving it can be treated as lower-dimensional problems for local operators via the Caffarelli-Silvestre extension\footnote{Actually, the paper \cite{CS07} was motivated by the study of the fractional obstacle problem in \cite{CSS08,Sil07}.} \cite{CS07}.


The elliptic obstacle problem,
$$\min\{Lu, u - \varphi\} = 0 \quad \text{in} \quad \Omega,$$
was studied for the case of $L = (-\Delta)^s$ by Caffarelli, Salsa and Silvestre using the extension and local arguments in \cite{CSS08}. Using a new Almgren-type monotonicity formula, they established the optimal $C^{1,s}$ regularity of solutions. Furthermore, they proved the following dichotomy at the free boundary points:

\begin{itemize}
    \item Either $x_0$ is a \textit{regular} free boundary point, and
    $$cr^{1+s} \leq \sup\limits_{B_r(x_0)}(u - \varphi) \leq Cr^{1+s} \quad \forall r \in (0,r_0),$$
    where $c > 0$.
    \item Or, if $x_0$ is not regular, it is called \textit{singular} and then
    $$0 \leq \sup\limits_{B_r(x_0)}(u-\varphi) \leq Cr^2 \quad \forall r \in (0,r_0).$$
\end{itemize}
Moreover, they also proved that the regular points are an open subset of the free boundary and that they are locally a $C^{1,\alpha}$ manifold.

It is important to notice that, in contrast with the classical case $s = 1$, there is no nondegeneracy property of the solutions, i.e. at singular points we may have $\sup\limits_{B_r(x_0)}(u-\varphi) \asymp r^k$ with $k \gg 1$.

The regularity of the free boundary and related questions have been widely investigated in the recent years by several authors. See \cite{GP09,BFR18b,FS18,CSV20,FJ21,SY21} for more information on the singular points, \cite{KPS15,DS16,JN17,KRS19} for higher regularity of the free boundaries, \cite{CRS17,AR20} for more general elliptic operators and \cite{PP15,GPPS17,FR18,Kuk21} for operators with drift.


\subsection{The parabolic case}
Much less is known about the parabolic case (\ref{eq:prev_problem}). Notice that the problem now depends strongly on the value of $s$: in the subcritical case $s \in (\frac{1}{2},1)$, the higher order term is the nonlocal operator, in the critical case $s = \frac{1}{2}$, both $\p_t$ and $L$ are of order one, and in the supercritical case $s \in (0,\frac{1}{2})$, the higher order term is the time derivative.

The first result in this direction was the regularity of the solutions in the case $L = (-\Delta)^s$ due to Caffarelli and Figalli \cite{CF13}, where they established the $C^{1,s}$ regularity in $x$ for all $s \in (0,1)$, and conjectured it to be optimal. They also established the $C^{1,\beta}$ regularity in $t$, with $\beta = \frac{1-s}{2s} - 0^+$ when $s \geq 1/3$, and that $u_t$ is log-Lipschitz in $t$ when $s < 1/3$. 
Their proof uses crucially the extension problem for the fractional Laplacian and the $C^{1,s}_x$ regularity is established via a new monotonicity formula for such problem.

Then, the regularity of the free boundary near regular points was established in the subcritical case, $s \in (\frac{1}{2},1)$, by Barrios, Figalli and the first author in \cite{BFR18}, where they establish a dichotomy for the free boundary points completely analogous to the elliptic case (in particular, $C^{1,s}_x$ regularity is optimal). One the main difficulties in \cite{BFR18} was to establish a classification of blow-ups in a context where Almgren-type monotonicity formulas are not available.

More recently, Borrin and Marcon established the quasi-optimal regularity of solutions for the subcritical case, $s \in (\frac{1}{2},1)$, for a more general equation allowing lower order terms \cite{BM21}.

Despite these developments, in the supercritical case $s \in (0,\frac{1}{2})$ the only known result was the regularity of the solutions for the fractional Laplacian proved in \cite{CF13}. Quite surprisingly, we prove here that this was not optimal, and that solutions are $C^{1,1}$ in $x$ and $t$.

\subsection{Main results}

Our main results are the following. We first establish the optimal regularity of the solutions.
\begin{thm}\label{thm:1.1}
Let $s \in (0,\frac{1}{2})$, and let $u$ be the solution of (\ref{eq:prev_problem}) with $L$ an operator satisfying (\ref{eq:operator}) and (\ref{eq:operator_elliptic}), and $\varphi \in C^{2,1}_c(\R^n)$.

Then, $u$ is Lipschitz in $\R^n\times[0,T]$ and
$$u \in C^{1,1}(\R^n\times(0,T]),$$
i.e., the solution $u$ is globally\footnote{Here we mean that for all $t_0 > 0$, $u \in C^{1,1}(\R^n\times[t_0,T])$.} $C^{1,1}$ in $x$ and $t$.
\end{thm}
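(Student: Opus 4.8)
The plan is to exploit the fact that in the supercritical regime $s \in (0,\frac12)$ the time derivative is the leading-order term, so that for each fixed $x$ the function $t \mapsto u(x,t)$ behaves like a solution of a one-dimensional (in $t$) obstacle-type inequality, while the operator $L$ acts as a bounded perturbation once we have enough a priori regularity. First I would record the standard facts: by comparison and a barrier argument $u$ is globally Lipschitz in $\R^n \times [0,T]$, $u \ge \varphi$, $u_t \ge 0$ (the solution is monotone in time, which follows from $\partial_t u + Lu \ge 0$, $u(\cdot,0) = \varphi$ and the fact that $\varphi$ can be taken as a subsolution after subtracting a suitable term, or from the optimal-stopping interpretation), and $\partial_t u + Lu = 0$ in the open set $\{u > \varphi\}$ with $\partial_t u + Lu \le \|L\varphi\|_\infty$ everywhere in the sense of the obstacle problem. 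The key structural observation is that $v := u - \varphi \ge 0$ satisfies $\partial_t v + Lv = -L\varphi =: f \in L^\infty$ in $\{v > 0\}$ and $v = v_t = 0$ on $\{v = 0\}$ (once we know $v_t$ exists there), so $v$ is a nonnegative function touching zero.

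The heart of the argument is a bootstrap in which one upgrades the known $C^{1,s}_x$ regularity of \cite{CF13} to $C^{1,1}$ using the smallness of $2s$. I would proceed by a contradiction/blow-up scheme at free boundary points, or more directly by an iteration on dyadic scales: suppose at a free boundary point $(x_0,t_0)$ we know $v(x_0 + x, t_0 + t) \le C(|x|^2 + |t|)^{1}$-type control at scale $r$; rescale $v_r(x,t) = v(x_0 + rx, t_0 + \sigma(r) t)/\rho(r)$ with the parabolic scaling $\sigma(r) = r^{2s}$ dictated by $\partial_t + (-\Delta)^s$-homogeneity (adapted to the general kernel via the ellipticity bounds), and show the rescalings converge to a global solution of the homogeneous problem $\partial_t w + Lw \le 0$, $w \ge 0$, $w\, (\partial_t w + Lw) = 0$, with controlled growth. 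Because $2s < 1$, the time variable dominates: the natural homogeneity forces blow-up profiles of the form $c_0(t - a\cdot x)_+^2$ — a half-parabola in the single variable $t - a\cdot x$ — and one classifies these using a Liouville-type / monotonicity argument (here I would look for a frequency or a Weiss-type energy adapted to the anisotropic scaling, or failing that, exploit convexity in $t$: since $v_t \ge 0$ and $v_{tt}$ satisfies a good equation away from the contact set, one shows $v$ is convex in $t$ along the relevant directions). The $C^{1,1}$ bound then follows from the quadratic growth $\sup_{Q_r(x_0,t_0)} v \le Cr^2$ at every free boundary point plus interior elliptic–parabolic estimates in $\{v > 0\}$, exactly as quadratic detachment yields $C^{1,1}$ in the classical obstacle problem.

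The main obstacle I anticipate is the absence of an Almgren-type monotonicity formula in this nonlocal parabolic setting — the same difficulty encountered in \cite{BFR18} — which forces one to classify blow-ups "by hand." Concretely, the hard step is showing that the rescaled solutions do not degenerate (a nondegeneracy statement at regular points) and that every admissible blow-up is exactly $c_0(t - a\cdot x)_+^2$, ruling out, e.g., profiles depending genuinely on $|x|$ at leading order; this is where I expect to need the supercriticality $s < \frac12$ in an essential way, via the heuristic that the $2s$-order operator $L$ cannot produce growth slower than order $2$ in $t$ once $v$ and $v_t$ vanish on the contact set. A secondary technical point is propagating regularity across the (a priori rough) free boundary: one needs enough uniform control to pass to the limit in the blow-up, which likely requires first establishing an intermediate $C^{1,\gamma}_{x,t}$ estimate for some $\gamma < 1$ — improving on the $C^{1,\beta}_t$ with $\beta = \frac{1-s}{2s}$ (only $\ge 1$ when $s \le \frac13$) and the log-Lipschitz statement of \cite{CF13} — perhaps by differentiating the equation in time and running a Schauder-type iteration for $v_t$, which solves a nonlocal parabolic equation with obstacle-type constraints.
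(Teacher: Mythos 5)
Your plan hinges on a blow-up at free boundary points with parabolic scaling and on classifying the blow-ups via a Weiss- or frequency-type monotonicity ``adapted to the anisotropic scaling,'' and this is precisely the step that is not available and that you leave as a hope rather than an argument. No such monotonicity formula is known for $\p_t+L$ with general kernels (this was already the obstruction in \cite{BFR18}), and in the supercritical regime the mismatch is structural: the expected profile $c_0(t-a\cdot x)_+^2$ is homogeneous under the isotropic scaling $(rx,rt)$, not under the parabolic scaling $(rx,r^{2s}t)$ you propose, so your rescalings cannot converge to that profile (under parabolic rescaling the spatial dependence degenerates, since $a\cdot x$ is of size $r^{1-2s}$ relative to $t$), while under isotropic scaling the operator becomes $\p_t+r^{1-2s}L$ with a right-hand side of order $r^{-1}L\varphi$, so there is no clean limiting free boundary problem either. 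This is why the paper avoids blow-ups altogether and works only with barriers, comparison and the supercritical scaling: semiconvexity (Proposition \ref{prop:semiconvex}), the identity $u_t=(Lu)^-$ and $C^1$ regularity (Propositions \ref{prop:t_C1alpha} and \ref{prop:grad_cont}), the directional monotonicity $|\nabla v|\leq Cv_t$ which makes the free boundary a Lipschitz graph in $t$ (Proposition \ref{prop:FB_es_mou}), the Hopf and anti-Hopf bounds $c_0t\leq v_t(x_0,t_0+t)\leq Mt$ built from explicit cone sub/supersolutions --- this is where $s<\frac12$ enters, since $L$ of a Lipschitz cone function is bounded (Propositions \ref{prop:hopf} and \ref{prop:anti-hopf}) --- and finally a bound on second incremental quotients in the directions $\nu=e_{n+1}+\eta e$ via an $L^1$--$L^\infty$ estimate for global subsolutions (Proposition \ref{prop:C11}).

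Moreover, even granting your quadratic growth $\sup_{Q_r}v\leq Cr^2$ at every free boundary point, your closing step ``quadratic detachment plus interior estimates yields $C^{1,1}$, exactly as in the classical obstacle problem'' fails for an operator of order $2s<1$ with merely measurable kernel: interior estimates for $\p_t+L$ gain only $2s-\varepsilon$ derivatives in $x$ (Proposition \ref{prop:interior_alphabeta}), so rescaling at points of $\{v>0\}$ can never produce an $L^\infty$ bound on $D^2_xu$; in the classical case one exploits that the rescaled functions are caloric and hence have interior $C^{1,1}$ bounds, and there is no analogue here. The paper's substitute is exactly the combination of semiconvexity from below with the anti-Hopf upper bound $v_t\leq Mt$, which controls the incremental quotients of $\p_\nu v$ at contact points, together with Lemma \ref{lem:CD16} to convert the resulting global subsolution bound into the two-sided estimate on $D^2_xu$, $\p_t\nabla u$ and $\p_{tt}u$; it is this mechanism, not interior regularity, that closes the proof. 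A smaller point: your suggested ``convexity in $t$ along the relevant directions'' is neither established nor needed --- only semiconvexity, inherited from the penalized problem, is available.
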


It is important to notice that because of the initial condition in (\ref{eq:prev_problem}), the solution $u(x,t)$ can never be a solution of the elliptic problem; this is why solutions might be more regular than in the elliptic case. Notice also, though, that our solution $u$ to (\ref{eq:prev_problem}) always converges as $T \rightarrow \infty$ to a solution to the elliptic problem. For this reason, we cannot expect to get a uniform $C^{1,1}$ bound in $\R^n\times(0,\infty)$.

Our proof is completely different from \cite{CF13}, and actually it is mainly based on barriers, comparison principles, and the supercritical scaling of the equation. 
In particular, we do not use any monotonicity formula.
This allows us not only to get the optimal $C^{1,1}$ regularity for the fractional Laplacian but also to extend the result to general integro-differential operators.

Then, we prove the global $C^{1,\alpha}$ regularity of the free boundary.
\begin{thm}\label{thm:1.2}
Let $s \in (0,\frac{1}{2})$, and let $u$ be the solution of (\ref{eq:prev_problem}) with $L$ an operator satisfying (\ref{eq:operator}) and (\ref{eq:operator_elliptic}), and $\varphi \in C^{2,1}_c(\R^n)$. Then,
\begin{itemize}
    \item The free boundary $\p\{u > \varphi\}$ is a $C^{1,\alpha}$ graph in the $t$ direction,
    $$\p\{u > \varphi\} = \{t = \Gamma(x)\}$$
    with $\Gamma \in C^{1,\alpha}$ and $\alpha > 0$.
    \item If $(x_0,t_0)$ is any free boundary point, the solution admits an expansion
    \begin{equation}\label{eq:expansion}
    (u-\varphi)(x_0+x,t_0+t) = c_0(t-a\cdot x)_+^2 + O(t^{2+\alpha}+|x|^{2+\alpha}),
    \end{equation}
    where $c_0 > 0$, $\alpha > 0$ and $a \in \R^n$.
\end{itemize}
\end{thm}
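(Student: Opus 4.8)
The plan is to build entirely on the $C^{1,1}$ estimate of Theorem~\ref{thm:1.1}, using only three soft tools — time-monotonicity of $u$, a non-degeneracy estimate, and a barrier-driven blow-up classification — and no monotonicity formula. First I would observe that $\p_t u\geq 0$: since $u(\cdot,0)=\varphi\leq u(\cdot,h)$ for every $h>0$ and $u(\cdot,\cdot+h)$, $u$ solve the same problem~\eqref{eq:prev_problem}, the comparison principle for the parabolic obstacle problem gives $u(x,t+h)\geq u(x,t)$. Hence, for each $x$, the contact set is a time-interval $[0,\Gamma(x)]$, so $\p\{u>\varphi\}=\{t=\Gamma(x)\}$ for a function $\Gamma$ which is both lower and upper semicontinuous, hence continuous — the graph structure in the $t$-direction thus comes for free, in contrast with the elliptic case. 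Moreover $w:=\p_t u\geq 0$ solves $\p_t w+Lw=0$ in the open set $\{u>\varphi\}$ and vanishes on $\{u=\varphi\}$; since, by the remark following Theorem~\ref{thm:1.1}, $u(\cdot,t)$ is never a solution of the elliptic problem for $t\in(0,T]$, the strong maximum principle forces $w>0$ throughout $\{u>\varphi\}$, so $\{w>0\}=\{u>\varphi\}$.

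Next I would prove the non-degeneracy $\sup_{B_r(x_0)\times(t_0,t_0+r)}(u-\varphi)\geq c\,r^2$ at every free boundary point $(x_0,t_0)$, with $c>0$ uniform. Writing $v:=u-\varphi\geq 0$, one has $\p_t v+Lv=-L\varphi$ in $\{v>0\}$; since $u\in C^{1,1}$, the quantity $\p_t u+Lu$ is continuous and vanishes on $\overline{\{u>\varphi\}}\ni(x_0,t_0)$, and combining this with $\p_t u(x_0,t_0)=0$ (because $u(x_0,\cdot)\equiv\varphi(x_0)$ on $[0,t_0]$) and the strictly negative nonlocal tail $Lv(x_0,t_0)=-\int_{\R^n}v(x_0+y,t_0)K(y)\,dy<0$ yields $L\varphi(x_0)=-Lv(x_0,t_0)>0$. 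Feeding this lower bound into the equation for $v$ and comparing with an explicit quadratic subsolution gives the stated estimate; what matters is that it is \emph{uniform}, so there are no singular free boundary points — exactly the mechanism behind the gain over the elliptic regularity — and, with the $C^{1,1}$ upper bound, $v$ grows precisely quadratically off the free boundary, which transfers to a linear non-degeneracy for $w$.

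I would then classify blow-ups via $w_r(x,t):=r^{-1}w(x_0+rx,t_0+rt)$. These are uniformly Lipschitz and non-degenerate, so along subsequences $w_r\to w_0$ locally uniformly with $\{w_r>0\}\to\{w_0>0\}$ in Hausdorff distance. The rescaled equation $\p_t w_r+r^{1-2s}Lw_r=0$ in $\{w_r>0\}$ has, since $s<\tfrac12$, a \emph{lower-order} nonlocal term: $r^{1-2s}Lw_r(x,t)\to-\ell$, a negative constant (equal to $Lw(x_0,t_0)$ for $L=(-\Delta)^s$, with $\ell=\int_{\R^n}w(x_0+y,t_0)K(y)\,dy>0$). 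So $w_0\geq 0$ solves $\p_t w_0=\ell$ in the supergraph $\{w_0>0\}$, is continuous and $1$-homogeneous, whence $w_0=\ell(t-a\cdot x)_+$ for some $a\in\R^n$ and $v_0=\tfrac{\ell}{2}(t-a\cdot x)_+^2$ is the blow-up of $v$. Ruling out $1$-homogeneous but nonlinear profiles is the delicate point: I would first use barriers $C(t-a\cdot x)_\pm^2$ to trap the contact set and show $\Gamma$ is locally Lipschitz, so at a.e.\ free boundary point the blow-up is a genuine half-space solution. A standard improvement-of-flatness iteration — the contact set being confined at dyadic scales to $\{t-a\cdot x\leq\eps(|x|+|t|)\}$, with $\eps$ decaying geometrically by compactness plus uniqueness of the blow-up (equivalently a boundary Harnack principle for $w$ in $\{t>\Gamma(x)\}$) — then yields $\Gamma\in C^{1,\alpha}$, and~\eqref{eq:expansion} follows by writing $v(x_0+x,t_0+t)=c_0(t-\nabla\Gamma(x_0)\cdot x)_+^2+E(x,t)$ with $E=O(|x|^{2+\alpha}+t^{2+\alpha})$ estimated from the $C^{1,\alpha}$ modulus of $\Gamma$, the $C^{1,1}$ bound on $v$, and the blow-up rate ($a=\nabla\Gamma(x_0)$, $c_0>0$).

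I expect the main obstacle to be precisely this quantitative control in the last step. Without an Almgren- or Weiss-type monotonicity formula — which the supercritical structure does not seem to supply — the geometric decay of the flatness has to be extracted from compactness and carefully tuned barriers alone, while tracking the nonlocal tail, whose lower-order role in the blow-up rests on $s<\tfrac12$ (and, were one to blow up $v$ at scale $r^2$ instead, on the cancellation of the constant nonlocal part against $-L\varphi(x_0)/r$). Making this robust for the whole ellipticity class~\eqref{eq:operator_elliptic}, where the rescaled kernels vary within the class and need not converge, adds a further layer of difficulty.
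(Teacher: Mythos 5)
Your plan hinges on a blow-up classification plus an improvement-of-flatness iteration, and that is precisely the step you leave unproven; it is a genuine gap, not a routine verification. You never establish that blow-ups of $w=\p_t u$ are homogeneous (no Almgren- or Weiss-type monotonicity formula is available here, as you yourself note), you do not exclude $1$-homogeneous cone profiles such as $w_0=\ell(t-|x|)_+$ (which satisfies $\p_t w_0=\ell$ in $\{w_0>0\}$, is continuous, nonnegative and $1$-homogeneous), and the geometric decay of flatness or the boundary Harnack inequality that would upgrade Lipschitz to $C^{1,\alpha}$ is only named, not proved; moreover the theorem asserts the expansion at \emph{every} free boundary point, so a statement at a.e.\ point would not suffice. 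There are also gaps upstream. The claim that continuity of $\Gamma$ ``comes for free'' from time-monotonicity is not correct: monotonicity plus closedness of the contact set only give upper semicontinuity of $\Gamma$; lower semicontinuity (and the Lipschitz bound you need later to say the blow-up sees a half-space) is exactly the content of Proposition \ref{prop:FB_es_mou} ($|\nabla v|\le C v_t$), whose proof requires the positivity Lemma \ref{lem:nondeg} together with delicate barriers far from the support of $\varphi$. Similarly, your nondegeneracy step (``comparing with an explicit quadratic subsolution'' after showing $L\varphi(x_0)>0$) needs a region contained in $\{v>0\}$ in which to run the comparison, i.e.\ it already presupposes quantitative control of the free boundary in the $t$ direction; this is the chicken-and-egg issue that your sketch does not resolve (in the paper, the Hopf estimate of Proposition \ref{prop:hopf} compares $v_t$, not $v$, with a subsolution in a forward cone whose existence is guaranteed by Proposition \ref{prop:FB_es_mou}).

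It is worth stressing that the paper's proof bypasses blow-ups entirely, and you actually have its key ingredient in hand. Since $2s<1$ and $\nabla_{x,t}u$ is Lipschitz by Theorem \ref{thm:1.1}, the operator $L$ maps these Lipschitz derivatives into $C^{1-2s}$ (the same computation as in Proposition \ref{prop:t_C1alpha}); differentiating $u_t=-Lu$ in $\{u>\varphi\}$ then gives that all second derivatives involving at least one $t$ are $C^{1-2s}$ up to the free boundary (Lemma \ref{lem:C2-2s}). Your observation that $-Lw(x_0,t_0)=\int_{\R^n}w(x_0+y,t_0)K(y)\,\mathrm{d}y>0$ is exactly the statement $u_{tt}>0$ at the free boundary (made uniform in Proposition \ref{prop:hopf}); combining it with the $C^{\alpha}$ regularity of $\nabla_{x,t}u_t$ up to the boundary, the free boundary is the zero level set of $u_t$ with $u_{tt}\ge c_0>0$ there, hence a $C^{1,\alpha}$ graph in $t$ with $\alpha=1-2s$ by an implicit-function-theorem argument (Theorem \ref{thm:FB_C1alpha}), and the expansion \eqref{eq:expansion} follows by integrating $u_{tt}$ twice in time from $\Gamma(x_0+x)$, using only the $C^{1,\alpha}$ modulus of $\Gamma$ and of $u_t$ (Corollary \ref{cor:expansion}). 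In short, the route you propose runs into exactly the difficulties the paper avoids by trading blow-up analysis for this supercritical bootstrap.
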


To have that \textit{all} free boundary points have the same expansion is a very uncommon result in the context of obstacle problems, and it contrasts notably with the parabolic subcritical and the elliptic obstacle problems. 
Moreover, the blow-up techniques that are always used to study free boundaries appeared ineffective here.
Our proof of Theorem \ref{thm:1.2} uses Theorem \ref{thm:1.1} and the fact that $L$ has order $2s < 1$ to gain further regularity instead.

This global regularity result allows us to define regular and singular points \textit{a posteriori} in a very simple way: we say that a free boundary point $(x_0,t_0)$ is regular if the vector $a$ in the expansion (\ref{eq:expansion}) is not zero, and is singular if $a = 0$. 

Finally, as a consequence of Theorem \ref{thm:1.2}, we deduce that the free boundary is $C^{1,\alpha}$ in the $x$ direction near regular points, and that singular points are in some sense scarce.

\begin{thm}\label{thm:1.3}
Let $s \in (0,\frac{1}{2})$, and let $u$ be the solution of (\ref{eq:prev_problem}) with $L$ an operator satisfying (\ref{eq:operator}) and (\ref{eq:operator_elliptic}), and $\varphi \in C^{2,1}_c(\R^n)$. Then,
\begin{itemize}
    \item The set of regular free boundary points is an open subset of $\p\{u > \varphi\}$.
    \item If $(x_0,t_0)$ is a regular free boundary point, the free boundary $\p\{u > \varphi\}$ is locally a $C^{1,\alpha}$ graph in the $x_i$ direction for some $i \in \{1,\ldots,n\}$,
    $$\p\{u > \varphi\}\cap B_r(x_0,t_0) = \{x_i = F(x_1,\ldots,x_{i-1},x_{i+1},\ldots,x_n,t)\},$$
    with $F \in C^{1,\alpha}$, $\alpha > 0$ and $r > 0$.
    \item Let $\Sigma_t$ be the set of singular free boundary points $(x_0,t_0)$ with $t_0 = t$. Then,
    $$\mathcal{H}^{n-1}(\Sigma_t) = 0 \quad \text{for almost every} \quad t \in (0,T).$$
\end{itemize}
\end{thm}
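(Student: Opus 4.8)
The plan is to derive Theorem~\ref{thm:1.3} as a corollary of the pointwise expansion~\eqref{eq:expansion} from Theorem~\ref{thm:1.2}, using only soft arguments: continuity/compactness, the implicit function theorem, and a covering/measure-theoretic estimate. The three bullets are essentially independent, so I would treat them one at a time.

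\emph{Openness of the regular set.} The key point is that the coefficients $c_0$, $a$ and the unit normal direction in~\eqref{eq:expansion} depend continuously on the free boundary point. First I would show, by a compactness argument on rescalings $(u-\varphi)(x_0+rx, t_0+r t)/r^2$, that these blow-up limits $c_0(x_0,t_0)(t - a(x_0,t_0)\cdot x)_+^2$ are unique (already given by the theorem) and that the map $(x_0,t_0)\mapsto (c_0,a)$ is continuous on the free boundary: if $(x_k,t_k)\to(x_0,t_0)$ are free boundary points, the uniform $C^{1,1}$ bound from Theorem~\ref{thm:1.1} gives precompactness of the rescalings, and any subsequential limit must be the expansion at $(x_0,t_0)$ by stability of the error term $O(t^{2+\alpha}+|x|^{2+\alpha})$ (one needs the error to be locally uniform, which should follow from the proof of Theorem~\ref{thm:1.2}, or can be extracted a~posteriori). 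Since ``regular'' means $a\neq 0$ and $a$ is continuous, the regular set is open in $\p\{u>\varphi\}$.

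\emph{The free boundary is a $C^{1,\alpha}$ graph in a spatial direction near regular points.} At a regular point $(x_0,t_0)$ we have $a\neq 0$; pick $i$ with $a_i\neq 0$. By Theorem~\ref{thm:1.2} the free boundary is already globally $\{t=\Gamma(x)\}$ with $\Gamma\in C^{1,\alpha}$, and the expansion~\eqref{eq:expansion} identifies the tangent plane of this graph at $x_0$ as $\{t = a\cdot(x-x_0)\}$ (matching the characteristic plane $t=a\cdot x$ of the parabola), hence $\nabla\Gamma(x_0)=a$. Since $\partial_{x_i}\Gamma(x_0)=a_i\neq 0$ and $\Gamma\in C^{1,\alpha}$, the implicit function theorem (in its $C^{1,\alpha}$ form) applied to $t=\Gamma(x)$ lets us solve locally for $x_i$ as a $C^{1,\alpha}$ function $F$ of the remaining variables and $t$. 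That is exactly the claimed parametrization on a small ball $B_r(x_0,t_0)$.

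\emph{Almost every time-slice has no singular points, in the sense $\mathcal H^{n-1}(\Sigma_t)=0$.} Here I would argue via the graph $\Gamma\in C^{1,\alpha}$. A free boundary point $(x_0,\Gamma(x_0))$ is singular iff $a(x_0)=\nabla\Gamma(x_0)=0$; thus the singular set is $\{(x,\Gamma(x)): x\in Z\}$ where $Z=\{\nabla\Gamma=0\}$ is the critical set of $\Gamma$. Over $Z$, the graph map $x\mapsto(x,\Gamma(x))$ has $\mathcal H^{n-1}(\{(x,\Gamma(x)):x\in Z\cap A\})$ comparable to $\mathcal H^{n-1}(Z\cap A)$ up to bounded Jacobian factors, but more usefully $\Sigma_t$ sits inside the level set $\{\Gamma=t\}\cap Z$. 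By the coarea-type fact for $C^1$ functions (or just Sard-type reasoning combined with the area formula applied to $\Gamma$), the set of values $t$ for which $\{\Gamma = t\}$ meets the critical set $Z$ in a set of positive $\mathcal H^{n-1}$ measure has Lebesgue measure zero; more precisely $\int \mathcal H^{n-1}\big(\{\Gamma=t\}\cap Z\big)\,dt = \int_Z |\nabla\Gamma|\,dx = 0$ by the coarea formula, so $\mathcal H^{n-1}(\Sigma_t)=\mathcal H^{n-1}(\{\Gamma=t\}\cap Z)=0$ for a.e.\ $t$.

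\emph{Main obstacle.} The crux is the first bullet: upgrading the pointwise expansion~\eqref{eq:expansion} to a \emph{locally uniform} one, equivalently establishing continuity of $(c_0,a)$ along the free boundary. The error term $O(t^{2+\alpha}+|x|^{2+\alpha})$ as stated is pointwise at a fixed $(x_0,t_0)$, and without a uniform modulus one cannot run the compactness argument cleanly; one must revisit the proof of Theorem~\ref{thm:1.2} to check its estimates are stable under moving the base point (which they should be, since the argument is barrier-based and translation-covariant), or alternatively derive continuity of $a=\nabla\Gamma$ directly from $\Gamma\in C^{1,\alpha}$, which in fact makes bullet one almost immediate once one knows $\nabla\Gamma(x_0)=a(x_0)$. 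The remaining steps are standard once this is in place.
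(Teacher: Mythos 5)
Your proposal is essentially correct, and for two of the three bullets it coincides with the paper's argument; the main caveat is the one you flagged yourself. For the openness of the regular set, the paper does not run any blow-up/compactness argument and therefore never needs a locally uniform version of the error in \eqref{eq:expansion}: it proves (Proposition \ref{prop:regular_open}) that a point is regular if and only if $\nabla u_t(x_0,t_0)\neq 0$, and openness then follows because $u_t\in C^{1,\alpha}$ up to the free boundary (Lemma \ref{lem:C2-2s}) and $u_{tt}>0$ there (Proposition \ref{prop:hopf}), so $\nabla u_t$ (equivalently $\nabla\Gamma=-\nabla v_t/v_{tt}$, and $a=\nabla\Gamma(x_0)$ as recorded in Corollary \ref{cor:expansion}) is continuous along $\p\{u>\varphi\}$. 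This is exactly your ``alternative'' route; the only thing it needs that your primary route does not supply is the identification $a=\nabla\Gamma(x_0)$, which is a one-line consequence of the expansion together with $c_0>0$ and $\Gamma\in C^1$ (test the expansion at points $t\approx\nabla\Gamma(x_0)\cdot x$ on the contact side). So I would drop the compactness argument altogether: as written it has the genuine uniformity gap you describe, and it buys nothing over the soft argument. Your second bullet (implicit function theorem applied to $t=\Gamma(x)$ at a point with $\p_{x_i}\Gamma(x_0)=a_i\neq 0$) is verbatim the paper's Proposition \ref{prop:FB_regular_pts}.

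For the third bullet your route is genuinely different from the paper's and is valid. The paper invokes the geometric measure theory lemma of \cite{FRS20} (Lemma \ref{lem:haussdorf_dim}), using that at singular points $\nabla\Gamma(x_0)=0$ and $\Gamma\in C^{1,\alpha}$, so $\Gamma(x)\le t_0+C|x-x_0|^{1+\alpha}$; this yields the stronger bound $\operatorname{dim}_{\mathcal H}\Sigma_t\le n-1-\alpha$ for a.e.\ $t$, of which $\mathcal H^{n-1}(\Sigma_t)=0$ is a particular case. Your coarea argument, $\int \mathcal H^{n-1}\big(\{\Gamma=t\}\cap\{\nabla\Gamma=0\}\big)\,\mathrm{d}t=\int_{\{\nabla\Gamma=0\}}|\nabla\Gamma|=0$, only uses that $\Gamma$ is (locally) $C^1$, is more elementary, and gives exactly the statement of Theorem \ref{thm:1.3}; what it does not give is the dimensional improvement $n-1-\alpha$, which is the extra content of the paper's Proposition \ref{prop:singular_dim}. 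One small bookkeeping point: $\Gamma$ is only controlled on slabs $\R^n\times[t_1,t_2]\subset\R^n\times(0,T)$, so run the coarea argument on a countable exhaustion by such slabs.
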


This problem is very different than the rest of elliptic and parabolic free boundary problems. Notice how Theorem \ref{thm:1.2} establishes a regularity result common to regular and singular free boundary points, which deeply contrasts with how these problems were approached until now. Besides, the fact that the free boundary is globally a $C^{1,\alpha}$ graph in the $t$ direction could also be true in the classical ($s = 1$) case, but is not known in the latter setting.

\begin{obs}
There is more literature available for the related (but not equivalent) obstacle problem with operator $(\p_t-\Delta)^s$. It appears when one considers the parabolic thin obstacle problem ($s = \frac{1}{2}$) or the parabolic thin obstacle problem with a weight. In this setting, the diffussion term is always the highest order term and thus the scaling is always subcritical. For more information on the topic, see \cite{AC10,BSZ17,DGPT17,ACM18,Shi20} and references therein.
\end{obs}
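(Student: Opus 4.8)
The final statement is a \emph{Remark} whose content is bibliographic: it records that the obstacle problem for the operator $(\p_t-\Delta)^s$ has its own literature, that this operator shows up in the parabolic thin obstacle problem and its weighted variants, and that the resulting problem is related to but \emph{not} equivalent to \eqref{eq:prev_problem}. As such there is no theorem to prove here; the ``proof'' amounts to verifying that the cited works indeed cover the asserted scope and to spelling out the one structural reason the two problems differ. So the plan is simply to document, not to argue.

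Concretely, I would proceed in two short steps. First, I would check that each reference treats the stated setting: \cite{AC10} for the parabolic Signorini ($s=\tfrac12$) problem via the parabolic Caffarelli--Silvestre extension; \cite{BSZ17}, \cite{DGPT17}, \cite{ACM18}, \cite{Shi20} for the weighted parabolic thin obstacle problem and for general fractional powers of the heat operator, including the higher-order $(\p_t-\Delta)^s$ obstacle problem. This is a matter of confirming that the reader is pointed to the right places and that no paper in the list is miscited. Second, I would record the scaling observation that justifies the phrase ``not equivalent'': the operator $(\p_t-\Delta)^s$ couples space and time inside a single fractional power, so under the parabolic rescaling $(x,t)\mapsto(rx,r^2t)$ the diffusion is the top-order contribution for every $s\in(0,1)$; the operator $\p_t+L$ in \eqref{eq:prev_problem}, by contrast, rescales so that $\p_t$ dominates precisely when $s<\tfrac12$. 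Hence the supercritical phenomenon exploited throughout this paper has no counterpart for $(\p_t-\Delta)^s$, which explains both the relation and the inequivalence.

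There is no genuine obstacle in this step: the remark is a signpost rather than a claim, and the only care required is editorial, namely making sure the attributions are accurate and that the scaling sentence is stated correctly. Accordingly I would keep the justification to the two bullet points above and not attempt anything more elaborate.
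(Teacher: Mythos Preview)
Your assessment is correct: the statement is a bibliographic remark, and the paper gives no proof for it whatsoever---it stands as an unproved observation pointing the reader to related literature. Your proposal to verify the citations and spell out the scaling distinction is entirely appropriate (and indeed more explicit than the paper itself), so nothing further is required.
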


\subsection{Plan of the paper}
The paper is organized as follows.

In Section \ref{sect:preliminaries} we prove a comparison principle and the semiconvexity of solutions. Then, in Section \ref{sect:C1} we prove that the solutions to (\ref{eq:prev_problem}) are $C^1$, and in Section \ref{sect:C11}, we show that the optimal regularity is $C^{1,1}$. Finally, Section \ref{sect:FB} is devoted to proving the $C^{1,\alpha}$ regularity of the free boundary and Theorem \ref{thm:1.3}.

Besides, we include some technical tools in two appendices. Appendix \ref{sect:appendix_heat} includes several regularity and growth estimates for the linear nonlocal parabolic equation, and Appendix \ref{sect:appendix_penaliz} is a discussion about the penalized obstacle problem.

\section{Preliminaries and semiconvexity}\label{sect:preliminaries}
In this Section we give some basic definitions and prove some basic results that will be used later on.

Given any solution $u$ of (\ref{eq:prev_problem}), we define
$$v(x,t) = u(x,t) - \varphi(x).$$
Notice that $\p_t u = \p_t v$. Let $B_r(x_0)$ be the ball of radius $r$ and center $x_0$ in $\R^n$, and let $Q_r(x_0,t_0)$ be the following parabolic cylinders:
$$Q_r(x_0,t_0) = B_r(x_0) \times (t_0 - r^{2s}, t_0 + r^{2s})$$
When the balls or cylinders are centered at the origin we will just write $B_r := B_r(0)$ and $Q_r := Q_r(0,0)$.

We will denote $\nabla := \nabla_x$, and we will write $\nabla_{x,t}$ when we refer to the gradient in all variables.

We will also define the following weighted $L^1$ norm:
$$\|u\|_{L^1_s} = \|u\|_{L^1_s(\R^n)} := \int_{\R^n}\frac{|u(x)|}{1+|x|^{n+2s}}\mathrm{d}x$$
and the corresponding weighted Lebesgue space
$$L^1_s(\R^n) := \{f : \R^n \to \R, f\text{ measurable}, \|f\|_{L^1_s} < +\infty\}.$$

\subsection{Basic tools}

We recall some standard tools for elliptic and parabolic PDE that are useful to deal with problem (\ref{eq:prev_problem}). Let us start with the comparison principle.
\begin{thm}\label{thm:comparison_obstacle}
Let $L$ be a nonlocal operator satisfying (\ref{eq:operator}) and (\ref{eq:operator_elliptic}), let $\varphi$ and $\psi$ be uniformly Lipschitz and bounded, and let $u$ and $v$ be the solutions of the following parabolic problems:
$$\left\{
\begin{array}{rclll}
     \min\{\p_tu+Lu, u-\varphi\} & = & 0 & \text{in} & \R^n \times (0,T)\\
     u(\cdot,0) & = & \varphi & \text{in} & \R^n,
\end{array}\right.$$
$$\left\{
\begin{array}{rclll}
     \min\{\p_tv+Lv, v-\psi\} & = & 0 & \text{in} & \R^n \times (0,T)\\
     v(\cdot,0) & = & \psi & \text{in} & \R^n.
\end{array}\right.$$
Assume additionally that $\varphi \leq \psi$. Then, $u \leq v$ in $\R^n\times(0,T)$.
\end{thm}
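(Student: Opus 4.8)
The plan is to prove the comparison principle for the obstacle problem by reducing it to a comparison principle for the linear (obstacle-free) equation, which we may assume is already available from the standard theory of nonlocal parabolic equations (or can be proven by the same argument in a simpler setting). The key structural fact is that $u$ can be characterized as the smallest supersolution of $\partial_t w + Lw \ge 0$ lying above $\varphi$, or equivalently via the penalization scheme discussed in Appendix~\ref{sect:appendix_penaliz}; either viewpoint reduces the problem to a maximum-principle-type argument.

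\textbf{Step 1: Set-up and the penalized problem.} First I would introduce, for $\varepsilon > 0$, a smooth nondecreasing penalization $\beta_\varepsilon : \R \to \R$ with $\beta_\varepsilon(r) = 0$ for $r \ge \varepsilon$, $\beta_\varepsilon(r) \to -\infty$ as $r \to -\infty$ in an appropriate controlled way, and consider the solutions $u_\varepsilon$, $v_\varepsilon$ of $\partial_t w + Lw = \beta_\varepsilon(w - \varphi)$ (resp.\ with $\psi$) and the corresponding initial data. Standard parabolic theory gives $u_\varepsilon \to u$ and $v_\varepsilon \to v$ locally uniformly as $\varepsilon \to 0$, so it suffices to prove $u_\varepsilon \le v_\varepsilon$ for each fixed $\varepsilon$.

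\textbf{Step 2: Comparison for the penalized equation.} Let $w = u_\varepsilon - v_\varepsilon$ and suppose, for contradiction, that $\sup w > 0$. Since $\varphi \le \psi$ and $\beta_\varepsilon$ is nondecreasing, one has the pointwise inequality $\beta_\varepsilon(u_\varepsilon - \varphi) - \beta_\varepsilon(v_\varepsilon - \psi) \le \beta_\varepsilon(u_\varepsilon - \varphi) - \beta_\varepsilon(v_\varepsilon - \varphi)$, which has the same sign as $w$; hence at an interior positive maximum of $w$ the right-hand side difference is $\le 0$. At such a point $(x_0,t_0)$ one has $\partial_t w \ge 0$ and $Lw(x_0) \ge 0$ (the latter because $w(x_0) - w(x_0 + y) \ge 0$ for all $y$ and $K \ge 0$), so $\partial_t w + Lw \ge 0 > $ (the penalization difference), a contradiction. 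The only subtlety is that the supremum of $w$ may fail to be attained because the domain is unbounded and non-compact in $t$ up to $T$: this is handled by the standard device of perturbing with $\delta(1 + |x|^2)^{s}$-type barriers (using that $L$ applied to such a weight is bounded, cf.\ Appendix~\ref{sect:appendix_heat}) plus a term linear in $t$, forcing a genuine interior maximum, then letting $\delta \to 0$; at $t = 0$ we use $u_\varepsilon(\cdot,0) = \varphi \le \psi = v_\varepsilon(\cdot,0)$.

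\textbf{Step 3: Pass to the limit.} Having $u_\varepsilon \le v_\varepsilon$ for all $\varepsilon$, let $\varepsilon \to 0$ to conclude $u \le v$ in $\R^n \times (0,T)$. The main obstacle is Step~2, specifically the non-compactness in space and the lack of a parabolic boundary on the top: the right choice of barrier must be compatible with the growth allowed by the weighted $L^1_s$ framework so that $L$ of the barrier is controlled, and one must ensure the perturbation does not destroy the sign of the penalization difference — choosing the spatial barrier to be added to $u_\varepsilon$ (not subtracted) and small handles this. An alternative, if one prefers to avoid penalization, is to work directly with the variational characterization of $u$ as the least supersolution above $\varphi$: then $\min\{v, \text{(any supersolution above }\varphi)\}$ is itself a supersolution above $\varphi$ by the same maximum-principle computation, and minimality of $u$ gives $u \le v$; the technical difficulties concerning unboundedness are essentially the same.
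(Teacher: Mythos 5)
Your overall strategy is the same as the paper's: approximate both obstacle problems by penalized equations, prove a comparison principle for the penalized problems via a maximum-principle argument with a spatial barrier of the type $\delta(1+|x|)^{s}$ plus a term forcing the extremum away from $t=T$, and then pass to the limit using the convergence of the penalization. This is precisely the combination of Lemma \ref{lem:penalization} and Lemma \ref{lem:comparison_beta} in the paper, whose proof uses the perturbation $\delta/(T-t)+\delta(1+|x|)^{s}+\delta M$. So the architecture is fine; the problem is the execution of Step 2, where the signs do not work out.

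Concretely: with your convention ($\beta_\varepsilon$ nondecreasing, $\beta_\varepsilon\equiv 0$ on $[\varepsilon,\infty)$, $\beta_\varepsilon\to-\infty$ at $-\infty$, placed on the right-hand side of $\partial_t w+Lw=\beta_\varepsilon(w-\varphi)$), the penalty pushes the solution \emph{down} where it dips below $\varphi$ and gives $\partial_t w+Lw\le 0$ in the limit, so these approximations do not converge to $\min\{\partial_t u+Lu,\,u-\varphi\}=0$; you need either the paper's positive, \emph{decreasing} penalization $\beta_\varepsilon(z)=e^{-z/\varepsilon}$ on the right-hand side, or your nondecreasing one added on the left. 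More importantly, the key inequality in Step 2 is reversed: since $\varphi\le\psi$ and $\beta_\varepsilon$ is nondecreasing, $\beta_\varepsilon(v_\varepsilon-\psi)\le\beta_\varepsilon(v_\varepsilon-\varphi)$, hence $\beta_\varepsilon(u_\varepsilon-\varphi)-\beta_\varepsilon(v_\varepsilon-\psi)\ge\beta_\varepsilon(u_\varepsilon-\varphi)-\beta_\varepsilon(v_\varepsilon-\varphi)$, and at a positive maximum of $w=u_\varepsilon-v_\varepsilon$ this last quantity is itself $\ge 0$ by monotonicity; so there is no contradiction with $\partial_t w\ge 0$, $Lw(x_0)\ge 0$, and the argument does not close. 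With the correct (decreasing) penalization the two monotonicity steps do give $\partial_t w+Lw\le 0$ at the maximum, but this clashes only \emph{weakly} with $\partial_t w+Lw\ge 0$; your claimed strict inequality ``$\partial_t w+Lw\ge 0>$ penalization difference'' is unjustified. This is exactly why the paper's Lemma \ref{lem:comparison_beta} runs a quantitative argument: at the perturbed extremum one obtains $\beta_\varepsilon(v-\varphi)-\beta_\varepsilon(u-\varphi)\le C\delta$, hence $v-u\ge -C'\delta$ there, which contradicts the choice of the constant $M$ in the barrier. Some such quantitative step (or another source of strictness) is needed to finish your Step 2; note also that the paper takes initial data $\varphi+\sqrt{\varepsilon}$ for the penalized problem, a minor but convenient difference.
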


To prove it, we use the penalization method. This approximation technique is based in considering the solutions to the obstacle problem as the limit of the solutions to the following parabolic problem
\begin{equation}\label{eq:penalized}
    \left\{\begin{array}{rclll}
    \p_tu^\varepsilon + Lu^\varepsilon & = & \beta_\varepsilon(u^\varepsilon - \varphi) & \text{in} & \R^n\times(0,T)\\
    u^\varepsilon(\cdot,0) & = & \varphi + \sqrt{\varepsilon},
    \end{array}\right.
\end{equation}
where $\beta_\varepsilon(z) = e^{-z/\varepsilon}$.

\begin{lem}\label{lem:penalization}
Let $L$ be an operator satisfying (\ref{eq:operator}) and (\ref{eq:operator_elliptic}), let $\varphi \in C^{2,1}_c(\R^n)$ and let $u^\varepsilon$ be the solution of (\ref{eq:penalized}).

Then, $u^\varepsilon \rightarrow u^0$ as $\varepsilon \rightarrow 0$ locally uniformly, where $u^0$ is the solution of (\ref{eq:prev_problem}).
\end{lem}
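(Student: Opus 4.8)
The plan is to establish the convergence through three ingredients: uniform estimates for the penalized solutions $u^\varepsilon$, a compactness argument producing a limit, and an identification of the limit as the solution of the obstacle problem via a stability/viscosity argument. First I would record the basic a priori bounds. Testing the penalized equation and using that $\beta_\varepsilon \geq 0$ together with $\varphi \in C^{2,1}_c$, one gets that $u^\varepsilon$ is bounded in $L^\infty(\R^n\times(0,T))$, uniformly in $\varepsilon$; here a convenient upper barrier is $\varphi + \sqrt\varepsilon + Ct$ for a suitable constant $C$ depending on $\|L\varphi\|_{L^\infty}$, since along such a barrier the penalization term $\beta_\varepsilon$ is harmless (it is bounded by $1$), while a lower barrier comes from comparison with the solution of the linear equation $\p_t w + Lw = 0$ with the same initial data. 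From the equation $\p_t u^\varepsilon + Lu^\varepsilon = \beta_\varepsilon(u^\varepsilon-\varphi) \in [0,1]$ and the interior parabolic regularity estimates for the linear nonlocal operator (as collected in Appendix \ref{sect:appendix_heat}), one then obtains uniform-in-$\varepsilon$ interior $C^\gamma_{x,t}$ bounds for $u^\varepsilon$ on compact subsets of $\R^n\times(0,T]$, and in fact up to $t=0$ using the smoothness of $\varphi$ and of the initial datum $\varphi+\sqrt\varepsilon$.

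Next, by Arzelà–Ascoli and a diagonal argument, along a subsequence $u^{\varepsilon_k} \to u^0$ locally uniformly in $\R^n\times[0,T)$, with $u^0$ continuous and $u^0(\cdot,0)=\varphi$. I would then show $u^0 \geq \varphi$: since $\beta_{\varepsilon}(z) = e^{-z/\varepsilon} \to 0$ locally uniformly for $z>0$ and blows up for $z<0$, if $u^0(x_0,t_0) < \varphi(x_0)$ at some point then $\beta_\varepsilon(u^\varepsilon-\varphi)$ would be enormous near $(x_0,t_0)$, forcing $\p_t u^\varepsilon + Lu^\varepsilon$ to be large and contradicting the uniform bounds (this is cleanest phrased as: the right-hand side is bounded, hence $u^\varepsilon-\varphi \geq -C\varepsilon\log(1/\varepsilon) \to 0$). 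Then I would check that $u^0$ satisfies $\min\{\p_t u^0 + Lu^0,\, u^0-\varphi\}=0$ in the viscosity sense: at a point where $u^0 > \varphi$ strictly, $\beta_\varepsilon(u^\varepsilon-\varphi)\to 0$ uniformly nearby, so passing to the limit in $\p_t u^\varepsilon + Lu^\varepsilon=\beta_\varepsilon(u^\varepsilon-\varphi)$ gives $\p_t u^0 + Lu^0 = 0$ there; on the contact set we automatically have $u^0-\varphi=0$, while the inequality $\p_t u^0 + Lu^0 \geq 0$ everywhere follows from $\beta_\varepsilon \geq 0$ and stability of viscosity supersolutions under uniform limits. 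This identifies $u^0$ as \emph{a} solution of (\ref{eq:prev_problem}); uniqueness of such solutions (which can be proven by the comparison principle, Theorem \ref{thm:comparison_obstacle}, whose proof in turn is built on this penalization scheme — so one must be careful to order the arguments, establishing comparison for the penalized problems first and deducing uniqueness of $u^0$ from the uniqueness of each $u^\varepsilon$ plus the convergence) then forces the whole family $u^\varepsilon$ to converge, not merely a subsequence.

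The main obstacle I expect is twofold. The logical one is avoiding circularity with Theorem \ref{thm:comparison_obstacle}: comparison for the penalized PDE (\ref{eq:penalized}) is a clean consequence of the maximum principle for $\p_t + L$ applied to $u_1^\varepsilon - u_2^\varepsilon$ together with the monotonicity of $z\mapsto -\beta_\varepsilon(z)$, and this must be set up independently, with the obstacle-problem comparison then recovered in the limit. The analytic one is the boundary behavior near $t=0$: since the initial data $\varphi+\sqrt\varepsilon$ depends on $\varepsilon$ and is merely $C^{2,1}$ (not compatible with $Lu=0$), one needs uniform modulus-of-continuity estimates up to $t=0$, which I would obtain by sandwiching $u^\varepsilon$ between translates-in-time of explicit barriers of the form $\varphi + \sqrt\varepsilon \pm C(t + t^{\min(1,2s)})$ or similar, exploiting that $L\varphi$ is bounded; this is where the regularity hypothesis $\varphi\in C^{2,1}_c$ is genuinely used. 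Once these uniform estimates are in hand, the passage to the limit is routine.
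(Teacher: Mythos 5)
Your outline follows essentially the same route as the paper's proof: comparison for the penalized equation first (the paper's Lemma \ref{lem:comparison_beta}), a two-sided $L^\infty$ sandwich $\varphi-o(1)\le u^\varepsilon\le\varphi+\sqrt{\varepsilon}+Ct$ (Lemmas \ref{lem:sub_penaliz} and \ref{lem:super_penaliz}), uniform interior H\"older estimates plus Arzel\`a--Ascoli and a diagonal argument, identification of the limit as a viscosity solution by splitting $\{u^0>\varphi\}$ from the contact set, and uniqueness to upgrade subsequential to full convergence. Your upper barrier is fine and even slightly more direct than the paper's derivation.

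The one step whose justification, as written, does not work is the key lower bound. You assert ``the right-hand side is bounded, hence $u^\varepsilon-\varphi\ge -C\varepsilon\log(1/\varepsilon)$,'' justified by saying that otherwise $\p_t u^\varepsilon+Lu^\varepsilon$ would be large, ``contradicting the uniform bounds.'' A pointwise large value of $\p_t u^\varepsilon+Lu^\varepsilon$ does not contradict an $L^\infty$ bound on $u^\varepsilon$; the boundedness of $\beta_\varepsilon(u^\varepsilon-\varphi)$ is precisely what must be proved, and it is the quantitative heart of the lemma. The paper proves it (Lemma \ref{lem:sub_penaliz}) by evaluating the equation at an approximate minimum of $u^\varepsilon-\varphi$, after adding the corrections $\delta/(T-t)+\delta(1+|x|)^s$ so that the minimum exists and is interior; there $\p_t(u^\varepsilon-\varphi)$ and $L(u^\varepsilon-\varphi)$ have favorable signs, giving $\beta_\varepsilon(u^\varepsilon-\varphi)\le\|L\varphi\|_{L^\infty}+C\delta$ and hence the stated lower bound. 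Your argument needs to be replaced by (or completed to) this minimum-point computation; note that your separate lower barrier $p_t*(\varphi+\sqrt\varepsilon)$ only yields a crude $L^\infty$ bound and does not give $u^0\ge\varphi$. Two further, more minor points: when passing to the limit in the viscosity sense for a nonlocal operator you must control the tails, which the paper does by proving convergence in $L^\infty([0,T-\kappa]\to L^1_s)$ before invoking the stability theorem of \cite{CD14} (with your uniform $L^\infty$ bounds this is easy, but it should be said); and in the last step, uniqueness of each $u^\varepsilon$ plus convergence along subsequences does not by itself force the whole family to converge --- one needs uniqueness for the obstacle problem (\ref{eq:prev_problem}) itself (or an independent argument that all subsequential limits coincide), which is what the paper invokes in its final step, while correctly ordering the penalized comparison principle before Theorem \ref{thm:comparison_obstacle}, exactly as you anticipate.
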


We give the proof in Appendix \ref{sect:appendix_penaliz}. Using this technique, we can now proceed.

\begin{proof}[Proof of Theorem \ref{thm:comparison_obstacle}]
It suffices to write $u$ and $v$ as the limits of the penalized versions of the respective problems, and then apply Lemma \ref{lem:comparison_beta}.
\end{proof}

The following observation is based in the strong maximum principle and will be important in our discussion.

\begin{lem}\label{lem:derivada_positiva}
Let $u$ be a solution of (\ref{eq:prev_problem}) with $L$ an operator satisfying (\ref{eq:operator}) and (\ref{eq:operator_elliptic}), and $\varphi \in C^{0,1}_c(\R^n)$. Then,
$$u_t > 0 \quad \text{in} \quad \{u > \varphi\}.$$
\end{lem}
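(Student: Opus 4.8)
The plan is to derive the strict positivity of $u_t$ in the non-contact set $\{u>\varphi\}$ from the strong maximum principle applied to the (approximating) penalized problems, using the initial condition to get a strict sign at time $t=0^+$. First I would note that $w := u_t$ formally solves the linearized equation $\p_t w + L w = 0$ in the open set $\{u>\varphi\}$, since there $u$ solves the linear nonlocal heat equation $\p_t u + Lu = 0$. So $w$ is caloric (for $\p_t + L$) in $\{u > \varphi\}$, and the strong maximum principle for nonlocal parabolic equations will forbid an interior minimum unless $w\equiv 0$ on the relevant "parabolic component", provided we can rule out the value $0$ being attained. The two things to be careful about are: (a) justifying that $u_t$ is an admissible (e.g. continuous, or viscosity) subsolution/supersolution of the linearized equation — here the semiconvexity of solutions proved earlier in this Section, together with $u_t>0$ at $t=0$ coming from $u(\cdot,0)=\varphi$ but $u\geq\varphi$, gives enough; and (b) the nonlocal strong maximum principle needs a sign on $w$ in \emph{all} of $\R^n$ (not just in $\{u>\varphi\}$) to conclude, since $L$ sees the whole space.

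To handle (b) I would work with the penalized solutions $u^\varepsilon$ of \eqref{eq:penalized} and first show $\p_t u^\varepsilon > 0$ everywhere in $\R^n\times(0,T)$, then pass to the limit. Differentiating \eqref{eq:penalized} in $t$, the function $w^\varepsilon := \p_t u^\varepsilon$ solves the \emph{linear} nonlocal parabolic equation
\begin{equation*}
\p_t w^\varepsilon + L w^\varepsilon = \beta_\varepsilon'(u^\varepsilon-\varphi)\, w^\varepsilon \quad\text{in } \R^n\times(0,T),
\end{equation*}
with initial datum $w^\varepsilon(\cdot,0) = \p_t u^\varepsilon(\cdot,0) = -L(\varphi+\sqrt\varepsilon) + \beta_\varepsilon(\sqrt\varepsilon) = -L\varphi + e^{-1/\sqrt\varepsilon}$. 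Since $\beta_\varepsilon' \le 0$, the zeroth-order coefficient is $\le 0$, so the maximum principle applies cleanly. The issue is that $w^\varepsilon(\cdot,0) = -L\varphi + e^{-1/\sqrt\varepsilon}$ need not be positive (indeed $L\varphi$ can have either sign). This is the main obstacle. To get around it, one uses instead that $v^\varepsilon := u^\varepsilon - \varphi$ starts at the constant $\sqrt\varepsilon > 0$, so on the contact-set side $u^\varepsilon - \varphi$ is tiny, $\beta_\varepsilon(u^\varepsilon-\varphi)\approx 1$, which forces $\p_t u^\varepsilon = -Lu^\varepsilon + \beta_\varepsilon(v^\varepsilon)$ to be of order $1$ and positive there; while in the region where $v^\varepsilon$ is already of order $1$, one argues $u^\varepsilon$ has strictly increased off the obstacle, and propagates positivity by the strong maximum principle for $w^\varepsilon$. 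Concretely, I expect the clean route is: show $v^\varepsilon \ge \sqrt\varepsilon\, e^{-t/\varepsilon}\cdot(\text{something})$ fails, so instead show directly that $v^\varepsilon$ is \emph{increasing} in $t$ by comparing $u^\varepsilon(\cdot,t+h)$ with $u^\varepsilon(\cdot,t)$: the time-translate $\tilde u(x,t) := u^\varepsilon(x,t+h)$ solves the same penalized PDE with initial datum $u^\varepsilon(\cdot,h) \ge \varphi + \sqrt\varepsilon = u^\varepsilon(\cdot,0)$ (using $u^\varepsilon \ge \varphi+\sqrt\varepsilon$ at later times, which itself follows from a barrier/comparison), hence by the comparison principle for the linear penalized equation (Lemma referenced as \texttt{lem:comparison\_beta}) $\tilde u \ge u^\varepsilon$, i.e. $u^\varepsilon(\cdot,t+h)\ge u^\varepsilon(\cdot,t)$, so $\p_t u^\varepsilon \ge 0$ after letting $h\to 0$.

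Having $\p_t u^\varepsilon \ge 0$ everywhere (hence $w^\varepsilon \ge 0$ solving the linear equation with nonpositive zeroth-order term), the strong maximum principle for the nonlocal parabolic operator says: if $w^\varepsilon(x_1,t_1) = 0$ at some interior point, then $w^\varepsilon \equiv 0$ for all $x\in\R^n$ and all $t\le t_1$; but $w^\varepsilon \not\equiv 0$ because $u^\varepsilon$ is strictly increasing somewhere (e.g. its initial datum is not a stationary solution of the penalized equation — $\p_t u^\varepsilon(\cdot,0) = -L\varphi + e^{-1/\sqrt\varepsilon}$ is not identically zero). Therefore $w^\varepsilon = \p_t u^\varepsilon > 0$ in all of $\R^n\times(0,T)$. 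Finally I pass to the limit $\varepsilon\to 0$: by Lemma \ref{lem:penalization}, $u^\varepsilon \to u$ locally uniformly, so $u_t \ge 0$ everywhere; then in the open set $\{u>\varphi\}$, $u$ solves $\p_t u + Lu = 0$, so $w = u_t$ solves $\p_t w + Lw = 0$ there, $w\ge 0$, and $w$ is not identically zero near any free boundary point (else $u$ would be time-independent near a point where it detaches from $\varphi$, contradicting that at an earlier time it equalled $\varphi$ there while $u > \varphi$ now). Applying the strong maximum principle once more, now within the set $\{u>\varphi\}$ but using $w\ge 0$ globally in $\R^n$ from the previous step, forces $w = u_t > 0$ in $\{u>\varphi\}$, which is the claim. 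The main obstacle, to reiterate, is obtaining the \emph{global} sign $u_t\ge 0$ on all of $\R^n$ — not just in the non-contact set — since the nonlocal operator $L$ requires this for the strong maximum principle to bite; this is exactly what the penalization and the monotone-in-time comparison argument above are designed to supply.
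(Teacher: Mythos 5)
You correctly identified the crux of the lemma (one needs the sign $u_t\ge 0$ on \emph{all} of $\R^n$, not just in $\{u>\varphi\}$, for the nonlocal strong maximum principle to apply), and your final step --- strong maximum principle for $w=u_t$ in $\{u>\varphi\}$ plus the observation that $w\equiv 0$ on a component would force $u(x,t)=u(x,0)=\varphi(x)$ along a vertical segment --- is essentially the paper's argument. However, the way you obtain $u_t\ge 0$ is broken. You try to prove $\p_t u^\varepsilon\ge 0$ for the penalized solutions of \eqref{eq:penalized} by comparing the time-translate $u^\varepsilon(\cdot,\cdot+h)$ with $u^\varepsilon$, and for that you need $u^\varepsilon(\cdot,h)\ge \varphi+\sqrt{\varepsilon}=u^\varepsilon(\cdot,0)$, which you assert ``follows from a barrier/comparison''. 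This is false: by your own computation $\p_t u^\varepsilon(\cdot,0)=-L\varphi+e^{-1/\sqrt{\varepsilon}}$, which is strictly negative wherever $L\varphi>e^{-1/\sqrt{\varepsilon}}$ (e.g.\ at a maximum point of any nontrivial $\varphi\in C^{2,1}_c$), so $u^\varepsilon(\cdot,h)<\varphi+\sqrt{\varepsilon}$ somewhere for small $h>0$ and the hypothesis of Lemma \ref{lem:comparison_beta} for the translate fails. Indeed $\p_t u^\varepsilon\ge 0$ is simply not true in general: the penalized solution has no constraint and must descend from $\varphi+\sqrt{\varepsilon}$ towards $\varphi$ in the region where the obstacle is active; Lemma \ref{lem:sub_penaliz} only provides the lower bound $u^\varepsilon-\varphi\ge-\varepsilon\ln^+\|L\varphi\|_{L^\infty}$, not $u^\varepsilon\ge\varphi+\sqrt{\varepsilon}$. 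Consequently the intermediate claims ``$w^\varepsilon\ge 0$'' and ``$w^\varepsilon>0$ in all of $\R^n\times(0,T)$'' do not hold, and monotonicity of $u$ cannot be extracted from the penalized problem this way.

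The repair is to run your time-translation comparison at the level of the obstacle problem itself, which is what the paper does: $\tilde u(x,t):=u(x,t+\delta)$ solves the same variational inequality with initial datum $u(\cdot,\delta)\ge\varphi$ (here the built-in constraint $u\ge\varphi$, absent for $u^\varepsilon$, is exactly what saves the argument), so Theorem \ref{thm:comparison_obstacle} gives $\tilde u\ge u$, i.e.\ $u_t\ge 0$ everywhere; no differentiation of the penalized equation or limit $\varepsilon\to0$ is needed for this step beyond what already went into Theorem \ref{thm:comparison_obstacle} via Lemma \ref{lem:penalization}. With that global sign in hand, your concluding step goes through, provided you make precise why each point of the segment $\{x\}\times[0,t]$ lies either in the contact set or in the connected component of $(x,t)$ in $\{u>\varphi\}$ (this again uses $u_t\ge 0$: once $u(x,s)>\varphi(x)$, the whole vertical segment above $(x,s)$ stays in $\{u>\varphi\}$).
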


\begin{proof}
First, we see that $u$ is nondecreasing in $t$. Consider the function $\tilde{u}(x,t) = u(x,t+\delta)$, $\delta > 0$. Then, $\tilde{u}$ is clearly also a solution of $\min\{(\p_t+L)\tilde{u},\tilde{u} - \varphi\} = 0$, and $\tilde{u}(\cdot,0) = u(\cdot,\delta) \geq u(\cdot,0) = \varphi$. Hence, $\tilde{u}$ is a supersolution of (\ref{eq:prev_problem}), and thus $\tilde{u} \geq u$. This yields $u(x,t+\delta) \geq u(x,t)$ for all $x$, $t$ and $\delta > 0$.

Let $w = u_t$. Differentiating (\ref{eq:prev_problem}), we have
$$\p_t w + Lw = 0 \quad \text{in} \quad \{u > \varphi\}.$$
We also know that $w \geq 0$ because $u$ is nondecreasing in time. Suppose $w = 0$ at $(x,t) \in \{u > \varphi\}$. Then, by the strong maximum principle, $w \equiv 0$ in all the connected component of $(x,t)$. In particular, $w = 0$ in the segment $\{x\} \times [0,t]$ because each point in the segment belongs either to the contact set or to the connected component of $(x,t)$ in $\{u > \varphi\}$. Hence, $u(x,t) = u(x,0) = \varphi(x)$, contradicting $(x,t) \in \{u > \varphi\}$. Therefore, $w > 0$ in  $\{u > \varphi\}$.
\end{proof}

\subsection{Semiconvexity}
An essential property of the solutions is that they are semiconvex, see \cite[Lemma 2.1]{BFR18} for the case $L = (-\Delta)^s$ with $s > \frac{1}{2}$. Here we can use the same strategy to prove it.

\begin{prop}\label{prop:semiconvex}
Let $s \in (0,\frac{1}{2})$, and let $u$ be a solution of (\ref{eq:prev_problem}), with $L$ an operator satisfying (\ref{eq:operator}) and (\ref{eq:operator_elliptic}), and $\varphi \in C^{2,1}_c(\R^n)$. Then, $u$ is semiconvex, i.e., for all unit vectors $e$ in $x,t$, $\p_{ee}u \geq -\hat{C}$, with a uniform bound that depends only on $\varphi$, $n$, $s$ and the ellipticity constants.
\end{prop}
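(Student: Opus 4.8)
The plan is to follow the standard incremental-quotient (difference-quotient) strategy for proving semiconvexity, combined with the comparison principle (Theorem \ref{thm:comparison_obstacle}) and the penalization scheme (Lemma \ref{lem:penalization}). Fix a unit vector $e$ in the space-time variables $(x,t)$ and a small parameter $h > 0$, and consider the second incremental quotient
$$
\delta^2_{h,e} u(x,t) = \frac{u\big((x,t) + he\big) + u\big((x,t) - he\big) - 2u(x,t)}{h^2}.
$$
The goal is to show $\delta^2_{h,e} u \geq -\hat C$ uniformly in $h$, which upon letting $h \to 0$ gives $\p_{ee} u \geq -\hat C$ in the viscosity/distributional sense, hence classically once we know (from Theorem \ref{thm:1.1}, or at least from interior regularity) that $u$ is smooth enough. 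The point is that the obstacle $\varphi \in C^{2,1}_c(\R^n)$ is itself semiconvex, so $\delta^2_{h,e}\varphi \geq -\|D^2\varphi\|_{L^\infty}$; the role of the argument is to transfer this to $u$.

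First I would work at the level of the penalized problem \eqref{eq:penalized}, where $u^\varepsilon$ solves a genuine (non-obstacle) linear-type parabolic equation $\p_t u^\varepsilon + L u^\varepsilon = \beta_\varepsilon(u^\varepsilon - \varphi)$, and $\beta_\varepsilon$ is smooth and monotone decreasing. The idea is to compare $u^\varepsilon$ with the function
$$
w(x,t) = \tfrac{1}{2}\Big(u^\varepsilon\big((x,t)+he\big) + u^\varepsilon\big((x,t)-he\big)\Big) + \tfrac{h^2}{2}\hat C,
$$
whose operator $\p_t w + Lw$ equals the average of $\beta_\varepsilon$ at the two shifted points (the constant drops out, since $L$ kills constants and $\p_t$ too). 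Using convexity of $\beta_\varepsilon$ (or, more robustly, just its monotonicity together with the semiconvexity bound we want to propagate), together with the fact that $\varphi$ is semiconvex with constant $\hat C_\varphi$, one shows that $w$ is a supersolution of the same penalized equation provided $\hat C \geq \hat C_\varphi$, and at $t = 0$ one has $w(\cdot,0) \geq u^\varepsilon(\cdot,0) = \varphi + \sqrt\varepsilon$ again by semiconvexity of $\varphi$ (for space directions) and by monotonicity of $u^\varepsilon$ in time (for the time component of $e$, using $u^\varepsilon_t \geq 0$, which also needs checking for the penalized problem, mirroring Lemma \ref{lem:derivada_positiva}). Then the comparison principle for the penalized equation gives $u^\varepsilon \leq w$, i.e. $\delta^2_{h,e} u^\varepsilon \geq -\hat C$, and letting $\varepsilon \to 0$ via Lemma \ref{lem:penalization} and then $h \to 0$ yields $\p_{ee} u \geq -\hat C$.

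The main obstacle, and the point requiring care, is the handling of the time direction and of the convexity/monotonicity interplay in $\beta_\varepsilon$. For a purely spatial direction $e \in \R^n \times \{0\}$, everything is clean: shifting commutes with $L$ and $\p_t$, the initial data is semiconvex, and $\beta_\varepsilon$ convex suffices. For a direction with a nonzero time component one must be sure that the shifted functions $u^\varepsilon((x,t)\pm he)$ are still defined and solve the equation on a slightly smaller time interval, and that the initial-time comparison still works — this is where $u^\varepsilon_t \geq 0$ enters, and one must either prove it directly for \eqref{eq:penalized} (differentiating the equation and using that $\beta_\varepsilon' \le 0$ together with $u^\varepsilon(\cdot,0) = \varphi + \sqrt\varepsilon$ being a strict supersolution at $t=0$, so $u^\varepsilon_t(\cdot,0) \ge 0$) or reduce to it. A secondary technical point is that a priori $u$ need only be Lipschitz, so the incremental quotients must be interpreted in a weak sense and the passage to $\p_{ee}u$ justified; since the statement quantifies only over second derivatives existing a.e. or in the sense of distributions, this is routine once the uniform bound on $\delta^2_{h,e}u$ is established. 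The constant $\hat C$ comes out depending only on $\|D^2\varphi\|_{L^\infty}$, $n$, $s$, $\lambda$, $\Lambda$, exactly as claimed; I would also remark that the argument is essentially the one in \cite[Lemma 2.1]{BFR18}, adapted to $s < \tfrac12$, where the only structural change is that the scaling of $Q_r$ uses the exponent $2s$ but this plays no role in the semiconvexity proof itself.
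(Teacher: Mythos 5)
Your interior step is fine: for a fixed direction $e$, the function $w=\tfrac12\big(u^\varepsilon(\cdot+he)+u^\varepsilon(\cdot-he)\big)+\tfrac{h^2}{2}\hat C$ is indeed a supersolution of the penalized equation, by Jensen (convexity of $\beta_\varepsilon$), monotonicity of $\beta_\varepsilon$, and semiconvexity of $\varphi$; this is a legitimate incremental-quotient variant of the paper's argument. The genuine gap is at the initial time for directions $e$ with a nonzero time component, and your proposed fix does not work. First, the monotonicity you invoke is false at the penalized level: from the equation, $u^\varepsilon_t(\cdot,0)=e^{-1/\sqrt{\varepsilon}}-L\varphi$, which is negative wherever $L\varphi>e^{-1/\sqrt{\varepsilon}}$ (e.g.\ near the maximum of $\varphi$), so $u^\varepsilon$ is \emph{not} nondecreasing in time and $\varphi+\sqrt\varepsilon$ is not a supersolution at $t=0$. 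Second, even a sign for $u^\varepsilon_t$ would only be first-order information, whereas the comparison must be started on the parabolic boundary $\{t=he_t\}$ of the region where the backward translate is defined, i.e.\ you need
$$u^\varepsilon(x+he_x,2he_t)+\varphi(x-he_x)+\sqrt{\varepsilon}-2u^\varepsilon(x,he_t)\geq -\hat C h^2,$$
and with the bounds available at this stage (Lipschitz estimates, $-C\varepsilon\lesssim u^\varepsilon-\varphi\lesssim\sqrt\varepsilon+t$, any monotonicity) the left-hand side is only bounded below by $-Ch$, not $-Ch^2$; so the comparison principle cannot be initialized and no uniform $\hat C$ comes out near $t=0$, while the statement requires a constant independent of $t$.

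What is really needed, and what the paper supplies, is a bound on the full initial Hessian in space-time, i.e.\ on $D^2_xu^\varepsilon(\cdot,0)$, $\nabla u^\varepsilon_t(\cdot,0)$ and $u^\varepsilon_{tt}(\cdot,0)$, uniform in $\varepsilon$. This is obtained by differentiating the penalized equation (Lemma \ref{lem:diff_penaliz}): $u^\varepsilon_t(\cdot,0)=e^{-1/\sqrt\varepsilon}-L\varphi$ and $u^\varepsilon_{tt}(\cdot,0)=L^2\varphi-\tfrac1\varepsilon e^{-1/\sqrt\varepsilon}\big(e^{-1/\sqrt\varepsilon}-L\varphi\big)$, where the dangerous factor $\tfrac1\varepsilon$ is absorbed by $e^{-1/\sqrt\varepsilon}$ precisely because the initial datum is lifted by $\sqrt\varepsilon$; moreover, bounding $\nabla L\varphi$ and $L^2\varphi$ by $\|\varphi\|_{C^{2,1}}$ uses $s<\tfrac12$, so the supercritical regime does play a structural role here, contrary to your closing remark. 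With this initial bound $C_0$ in hand, one can close the argument either your way or as the paper does: differentiate twice in the direction $\nu$, use $\beta_\varepsilon''\ge0$ and $\beta_\varepsilon'\le0$, and apply the maximum principle to $\min\{0,u^\varepsilon_{\nu\nu}+C_0\}$ to get $u^\varepsilon_{\nu\nu}\ge-C_0$, then pass to the limit. Without the initial second-order computation, your scheme does not control time-like second derivatives near $t=0$ and the proof is incomplete.
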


\begin{obs}
The assumption $s \in (0,\frac{1}{2})$ can be substituted by the more general $s \in (0,1)$ and $\varphi \in C^{\max\{2,4s+\varepsilon\}}_c$ for some small $\varepsilon > 0$.
\end{obs}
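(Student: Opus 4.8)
The plan is to revisit the proof of Proposition \ref{prop:semiconvex} and check where the assumption $s\in(0,\frac12)$ and $\varphi\in C^{2,1}_c$ are actually used, then replace them by the weaker bookkeeping needed for general $s\in(0,1)$ and $\varphi\in C^{\max\{2,4s+\varepsilon\}}_c$. The underlying mechanism is unchanged: one works with the penalized problem (\ref{eq:penalized}), differentiates twice in a unit direction $e$ in the $(x,t)$ variables, and shows that $w:=\partial_{ee}u^\varepsilon$ satisfies a linear nonlocal parabolic inequality of the form $\partial_t w + Lw \le g$, where the forcing term $g$ collects the contributions coming from $\partial_{ee}(L\varphi)$, from $\beta_\varepsilon''\,|\nabla_{x,t}(u^\varepsilon-\varphi)|^2\ge 0$ (the good sign, which is exactly why semiconvexity and not full convexity is obtained), and from the penalization acting on $\partial_{ee}\varphi$. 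One then feeds this into a barrier/maximum-principle argument (the linear estimates of Appendix \ref{sect:appendix_heat}) to conclude a lower bound on $w$ independent of $\varepsilon$, and passes to the limit using Lemma \ref{lem:penalization}.

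The first step is to identify the precise regularity of $\varphi$ that the argument demands. Differentiating the equation twice produces the term $\partial_{ee}(L\varphi)$; since $L$ has order $2s$, controlling $L(\partial_{ee}\varphi)$ in $L^\infty$ (uniformly, with the $L^1_s$ tail handled by compact support) requires $\partial_{ee}\varphi\in C^{2s+\varepsilon}$ when $2s<1$, i.e. $\varphi\in C^{2+2s+\varepsilon}_c$, and $\partial_{ee}\varphi\in C^{2s-1+\varepsilon}$ when $2s\ge 1$, i.e. $\varphi\in C^{2s+1+\varepsilon}_c$. Combining with the plain requirement $\varphi\in C^2_c$ needed to make sense of $\partial_{ee}\varphi$ and of the initial datum, the sharp hypothesis is $\varphi\in C^{\max\{2,\,2s+1+\varepsilon\}}_c=C^{\max\{2,\,4s+\varepsilon'\}}_c$ after relabelling, which is exactly the statement in the Remark. (Here I would point out that for $s\le \tfrac12$ this reduces to $C^{2}_c$, slightly weaker even than the $C^{2,1}_c$ used in the proposition, but $C^{2,1}_c$ is kept in the main results for uniformity with Theorems \ref{thm:1.1}--\ref{thm:1.3}, where the extra Lipschitz derivative is convenient elsewhere.)

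The second step is to verify that nothing else in the proof uses $s<\tfrac12$. The semiconvexity argument itself is scale-free in the relevant sense: the good term $\beta_\varepsilon''\ge 0$ requires no smallness of $s$, and the comparison/barrier step uses only uniform ellipticity (\ref{eq:operator_elliptic}) and the linear parabolic estimates, which hold for all $s\in(0,1)$. The only place one must be slightly careful is the interplay between the parabolic scaling $Q_r=B_r\times(t_0-r^{2s},t_0+r^{2s})$ and the order of the operator; but since the claimed conclusion ($\partial_{ee}u\ge -\hat C$ with $\hat C$ depending on $\varphi,n,s,\lambda,\Lambda$) does not assert any scale-invariant improvement, this is harmless. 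So the proof proceeds verbatim once the forcing term $\partial_{ee}(L\varphi)$ is bounded, which is precisely what the enlarged regularity assumption on $\varphi$ buys.

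\textbf{Main obstacle.} The genuine difficulty is the case $s\ge \tfrac12$, where $2s\ge 1$: there the fractional operator $L$ differentiates more than once, so bounding $L(\partial_{ee}\varphi)$ in $L^\infty$ forces $\partial_{ee}\varphi$ to have Hölder regularity of order $2s-1+\varepsilon$, which is the origin of the $C^{4s+\varepsilon}_c$ threshold and must be tracked carefully through the estimate (including the $L^1_s$ tail, which is fine because $\varphi$ is compactly supported). One also has to make sure that the penalized solutions $u^\varepsilon$ are themselves smooth enough to justify differentiating the equation twice — this follows from interior Schauder-type estimates for the linear nonlocal parabolic equation (Appendix \ref{sect:appendix_heat}) applied bootstrapped off the regularity of $\varphi$, but it is worth stating explicitly that the bound is uniform in $\varepsilon$. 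Everything else is routine adaptation of the proof of Proposition \ref{prop:semiconvex}.
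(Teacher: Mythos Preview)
Your broad plan---revisit the proof of Proposition~\ref{prop:semiconvex} and isolate where the regularity of $\varphi$ is actually used---is correct, and you are right that nothing else in the argument (the sign of $\beta_\varepsilon''$, the maximum principle step, Lemma~\ref{lem:penalization}) depends on $s<\tfrac12$. However, you have misidentified the term that produces the $4s+\varepsilon$ threshold, and your bookkeeping is wrong.

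There is no forcing term $\partial_{ee}(L\varphi)$ in the differentiated equation. The penalized equation is $\partial_t u^\varepsilon+Lu^\varepsilon=\beta_\varepsilon(u^\varepsilon-\varphi)$; differentiating twice in a space-time direction $\nu$ (Lemma~\ref{lem:diff_penaliz}) produces only the $\beta_\varepsilon'$ and $\beta_\varepsilon''$ terms, and in particular no $L\varphi$ appears on the right-hand side. The regularity of $\varphi$ enters \emph{only through the initial datum} $u^\varepsilon_{\nu\nu}(\cdot,0)$, which is what the constant $C_0$ in the proof bounds. From Lemma~\ref{lem:diff_penaliz},
\[
u^\varepsilon_t(\cdot,0)=e^{-1/\sqrt\varepsilon}-L\varphi,\qquad
u^\varepsilon_{tt}(\cdot,0)=L^2\varphi-\tfrac1\varepsilon e^{-1/\sqrt\varepsilon}\bigl(e^{-1/\sqrt\varepsilon}-L\varphi\bigr),
\]
so the three quantities to bound are $\|D^2\varphi\|_{L^\infty}$, $\|\nabla L\varphi\|_{L^\infty}$, and $\|L^2\varphi\|_{L^\infty}$. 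The last one is the bottleneck: $L^2$ has order $4s$, hence $\|L^2\varphi\|_{L^\infty}<\infty$ requires $\varphi\in C^{4s+\varepsilon}_c$. Together with $\varphi\in C^2_c$ (needed for $D^2\varphi$), this gives exactly $C^{\max\{2,4s+\varepsilon\}}_c$. The term $\nabla L\varphi$ only needs $\varphi\in C^{2s+1+\varepsilon}_c$, which is weaker than $C^{4s+\varepsilon}_c$ when $s\ge\tfrac12$.

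Your computation instead tracks $L(\partial_{ee}\varphi)$, which would require $\varphi\in C^{2+2s+\varepsilon}_c$---strictly more than what the remark claims. You then arrive at $C^{2s+1+\varepsilon}_c$ through an incorrect case split, and finally assert $C^{2s+1+\varepsilon}_c=C^{4s+\varepsilon'}_c$ ``after relabelling'', which is simply false except at $s=\tfrac12$. So while the final exponent in the remark happens to appear in your write-up, the derivation is wrong; once you trace the initial data correctly, the $L^2\varphi$ term gives it directly.
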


\begin{proof}[Proof of Proposition \ref{prop:semiconvex}]
Using Lemma \ref{lem:penalization}, we can write $u$ as the limit of solutions to the penalized problem (\ref{eq:penalized}). Since the locally uniform limit of uniformly semiconvex functions is semiconvex, we only need to prove it for the approximations $u^\varepsilon$.

First, we use Lemma \ref{lem:diff_penaliz} and notice that $\beta_\varepsilon'' \geq 0$ to obtain
$$\p_tu^\varepsilon_{\nu\nu} + Lu^\varepsilon_{\nu\nu} \geq \beta_\varepsilon'(u^\varepsilon - \varphi)(u^\varepsilon_{\nu\nu} - \varphi_{\nu\nu}),$$
for any unit vector $\nu \in \R^n\times\R$, and also
\begin{align*}
    u^\varepsilon_t(\cdot,0) &= e^{-1/\sqrt{\varepsilon}} - L\varphi,\\
    u^\varepsilon_{tt}(\cdot,0) &= L^2\varphi - \frac{1}{\varepsilon}e^{-1/\sqrt{\varepsilon}}(e^{-1/\sqrt{\varepsilon}}-L\varphi).
\end{align*}

Define $C_0 := \|u^\varepsilon_{\nu\nu}(\cdot,0)\|_{L^\infty(\R^n)}$. Then,
\begin{align*}
    C_0 &\leq \|D^2_xu^\varepsilon(\cdot,0)\|_{L^\infty(\R^n)} + \|\nabla u^\varepsilon_t(\cdot,0)\|_{L^\infty(\R^n)} + \|u^\varepsilon_{tt}(\cdot,0)\|_{L^\infty(\R^n)}\\
    &\leq \|D^2\varphi\|_{L^\infty(\R^n)} + \|\nabla L\varphi\|_{L^\infty(\R^n)} + \|L^2\varphi - \frac{1}{\varepsilon}e^{-1/\sqrt{\varepsilon}}(e^{-1/\sqrt{\varepsilon}}-L\varphi)\|_{L^\infty(\R^n)}\\
    &\leq \|D^2\varphi\|_{L^\infty(\R^n)} + \|\nabla L\varphi\|_{L^\infty(\R^n)} + \|L^2\varphi\|_{L^\infty(\R^n)} + C\varepsilon + \|L\varphi\|_{L^\infty(\R^n)}\\
    &\leq C\|\varphi\|_{C^{1,1}(\R^n)} + C\varepsilon.
\end{align*}

Using again that $\beta_\varepsilon' \leq 0$, it follows that $\beta_\varepsilon'(u^\varepsilon - \varphi)(u^\varepsilon_{\nu\nu}+C_0) \geq 0$ whenever $u^\varepsilon_{\nu\nu} + C_0 \leq 0$. Hence, $w := \min\{0,u^\varepsilon_{\nu\nu} + C_0\}$ satisfies
$$\p_tw + Lw \geq 0 \quad \text{in} \quad \R^n\times(0,T).$$

Finally, $w \equiv 0$ at $t = 0$ by construction, hence, by the maximum principle, $w \equiv 0$ everywhere, i.e. $u^\varepsilon_{\nu\nu} \geq -C_0$. Since this constant does not depend on $\varepsilon$, we can pass to the limit to get the desired result.
\end{proof}

\section{$C^1$ regularity of solutions}\label{sect:C1}
Here we prove that solutions $u$ to the problem (\ref{eq:prev_problem}) are globally $C^1$ in $x$ and $t$. This was already known in the case of $L = (-\Delta)^s$ thanks to \cite{CF13}; here we prove it in a different way for our general class of operators (\ref{eq:operator}). The first step is to prove global Lipschitz regularity.

Notice that we already know that $u$ is Lipschitz because it is globally bounded and semiconvex, but we provide a simple proof to obtain the optimal Lipschitz constant under the minimal requirements for $\varphi$.
\begin{prop}\label{prop:global_lip}
Let $s \in (0,\frac{1}{2})$, and let $u$ be a viscosity solution of (\ref{eq:prev_problem}) with $L$ an operator satisfying (\ref{eq:operator}) and (\ref{eq:operator_elliptic}), and $\varphi \in C^{0,1}_c(\R^n)$. Then, $u$ is globally Lipschitz,
$$\|\nabla u\|_{L^\infty(\R^n\times(0,T))} \leq \|\varphi\|_{C^{0,1}(\R^n)} \quad \text{and} \quad \|u_t\|_{L^\infty(\R^n\times(0,T))} \leq C\|\varphi\|_{C^{0,1}(\R^n)},$$
where $C$ depends only on the dimension, $s$ and the ellipticity constants.
\end{prop}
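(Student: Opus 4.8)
We want to prove Proposition \ref{prop:global_lip}: global Lipschitz bounds for the solution $u$ of the parabolic obstacle problem, with the sharp spatial constant $\|\nabla u\|_{L^\infty} \le \|\varphi\|_{C^{0,1}}$ and a dimensional constant for $\|u_t\|_{L^\infty}$. The strategy is entirely based on the comparison principle (Theorem \ref{thm:comparison_obstacle}) together with the invariances of the problem, exactly the philosophy advertised in the introduction — no monotonicity formulas, just barriers and comparison.

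\medskip

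\textbf{Step 1: Spatial Lipschitz bound via translation invariance.} Fix a unit vector $e \in \R^n$ and $h > 0$. The operator $L$ and $\p_t$ are translation invariant in $x$, so $u_h(x,t) := u(x + he, t)$ is the solution of \eqref{eq:prev_problem} with obstacle $\varphi_h(x) := \varphi(x+he)$. Since $\|\varphi_h - \varphi\|_{L^\infty(\R^n)} \le h\,\|\varphi\|_{C^{0,1}(\R^n)} =: hK$, the obstacle $\varphi_h + hK$ dominates $\varphi$. By uniqueness, the solution with obstacle $\varphi_h + hK$ and initial datum $\varphi_h + hK$ is exactly $u_h + hK$ (adding a constant to both obstacle and solution is compatible with the equation, because $L$ kills constants). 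Hence Theorem \ref{thm:comparison_obstacle} gives $u \le u_h + hK$; by symmetry (swapping the roles, or using $-he$), also $u_h \le u + hK$. Therefore $|u(x+he,t) - u(x,t)| \le hK$ for all $x,t,h$ and all unit $e$, which is precisely $\|\nabla u\|_{L^\infty(\R^n \times (0,T))} \le \|\varphi\|_{C^{0,1}(\R^n)}$ (after a standard difference-quotient/almost-everywhere-differentiability argument, legitimate since $u$ is already known to be semiconvex, hence locally Lipschitz).

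\medskip

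\textbf{Step 2: Time-derivative bound via barriers.} For the bound on $u_t$ we need a one-sided estimate from above (the lower bound $u_t \ge 0$ is already Lemma \ref{lem:derivada_positiva}); combined, these give $\|u_t\|_{L^\infty}$. The natural approach: we already know $u$ is nondecreasing in $t$, so it suffices to show $u(x,t+\delta) - u(x,t) \le C K \delta$ for small $\delta$, uniformly. For $t > 0$ this should follow by a comparison argument comparing $u(\cdot,\cdot+\delta)$ with $u + (\text{controlled increment})$; but the subtle point is the behavior near $t = 0$, where $u(\cdot,0) = \varphi$ need not satisfy $\p_t u + Lu \ge 0$ in any pointwise sense, and indeed $u_t(\cdot, 0^+)$ is governed by $-L\varphi$, which is only bounded (not controlled by $\|\varphi\|_{C^{0,1}}$ alone) if $\varphi \notin C^{0,1}_c$ with more regularity. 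So the clean statement with the $C^{0,1}$ norm must come from a global barrier argument: construct $\psi^\pm(x,t) = \pm CKt$-type barriers adapted to the supercritical scaling, or use the penalized problem \eqref{eq:penalized} where $u^\varepsilon_t + L u^\varepsilon = \beta_\varepsilon(u^\varepsilon - \varphi) \le 1$ and bound $u^\varepsilon_t$ from above by comparison at the level of the linear equation (using the heat-kernel estimates of Appendix \ref{sect:appendix_heat} to control $\|Lu^\varepsilon\|$ for $t>0$), then pass to the limit $\varepsilon \to 0$.

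\medskip

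\textbf{Main obstacle.} The spatial bound (Step 1) is essentially immediate from comparison. The real work is Step 2: making the $u_t$ bound quantitative with a \emph{dimensional} constant and with the correct dependence on $\|\varphi\|_{C^{0,1}}$ only, despite the fact that $\p_t u$ at small times is tied to $L\varphi$, a second-order-type quantity. The way out is presumably to exploit that for the obstacle problem $u_t + Lu \ge 0$ globally (even at $t=0^+$ in the viscosity sense) together with $u - \varphi \ge 0$; combining $u_t \ge -Lu$ on the coincidence set with the sup-bound $\sup(u-\varphi) \le CK^2 \, t^{?}$-type nondegeneracy-free growth (and using that on $\{u=\varphi\}$ one has $u_t = \varphi_t = 0 \le $ trivially), one reduces to estimating $u_t$ on $\{u > \varphi\}$ where the equation is linear and the heat-kernel/regularity estimates of Appendix \ref{sect:appendix_heat} apply. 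I expect the argument to build a self-similar barrier respecting the parabolic cylinders $Q_r = B_r \times (t_0 - r^{2s}, t_0 + r^{2s})$ — the supercritical scaling $x \mapsto \lambda x$, $t \mapsto \lambda t$ (not $\lambda^{2s}$!) is what makes the time derivative the "fast" variable and ultimately what will later upgrade Lipschitz to $C^{1,1}$.
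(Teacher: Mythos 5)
Your Step 1 is correct and is exactly the paper's argument: translate in $x$, lift by $\|\varphi\|_{C^{0,1}}|h|$, and apply the comparison principle of Theorem \ref{thm:comparison_obstacle}. The problem is Step 2, which is not a proof but a list of speculative routes (barriers of the form $\pm CKt$, penalization plus heat-kernel estimates, a ``self-similar barrier''), and, more importantly, it rests on a misdiagnosis. You assert that $u_t$ at small times is tied to $L\varphi$, ``a second-order-type quantity \ldots{} not controlled by $\|\varphi\|_{C^{0,1}}$ alone.'' This is false precisely in the regime of the proposition: for $s<\tfrac12$ the operator has order $2s<1$, so for any bounded Lipschitz function $w$ one has the pointwise bound
\begin{equation*}
|Lw(x)| \le \int_{B_1}\|\nabla w\|_{L^\infty}\,|y|\,K(y)\,\mathrm{d}y + \int_{B_1^c}2\|w\|_{L^\infty}K(y)\,\mathrm{d}y \le C\big(\|\nabla w\|_{L^\infty}+\|w\|_{L^\infty}\big),
\end{equation*}
because $|y|K(y)\le \Lambda|y|^{1-n-2s}$ is integrable near the origin (this is exactly where $s<\tfrac12$ enters) and $K$ is integrable at infinity. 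This is the single key observation you are missing, and it is what the proposition is really about.

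With it, the paper's proof of the time bound is elementary and needs none of the machinery you invoke: by your Step 1 and the comparison bound $\|u\|_{L^\infty}\le\|\varphi\|_{L^\infty}$, one gets $\|Lu(\cdot,t)\|_{L^\infty}\le C\|\varphi\|_{C^{0,1}}$ for every $t$. Then one observes that $u_t=0$ in the interior of the contact set, while $u_t=-Lu$ in $\{u>\varphi\}$; since the contact set is closed, the Lipschitz character of $u$ in $t$ is determined by its behaviour on the complement, so $\|u_t\|_{L^\infty}\le\|Lu\|_{L^\infty}\le C\|\varphi\|_{C^{0,1}}$. In particular there is no delicate issue at $t=0^+$: the ``second-order'' difficulty you worry about simply does not exist when $2s<1$, and no penalization, Duhamel formula, or nondegeneracy-type growth estimate is needed at this stage. (A minor additional point: your appeal to semiconvexity to justify differentiating in Step 1 is not available under the hypothesis $\varphi\in C^{0,1}_c$ of this proposition, since Proposition \ref{prop:semiconvex} requires $\varphi\in C^{2,1}_c$; but the uniform bound on difference quotients is all that is needed, so this is harmless.)
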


\begin{proof}
First of all, $\|u\|_{L^\infty(\R^n\times(0,T))} \leq \|\varphi\|_{L^\infty(\R^n\times(0,T))}$ by Theorem \ref{thm:comparison_obstacle}.

We will treat Lipschitz regularity in $x$ and $t$ separately. For spatial regularity, observe that for every $h \in \R^n$, the function $w_h(x,t) := u(x+h,t) + \|\varphi\|_{C^{0,1}}|h|$ is a solution of
\begin{equation*}
\left\{
\begin{array}{rclll}
     \min\{\p_tw_h+Lw_h, w_h-\varphi_h\} & = & 0 & \text{in} & \R^n \times (0,T]\\
     w_h(\cdot,0) & = & \varphi_h & \text{in} & \R^n,
\end{array}\right.
\end{equation*}
with $\varphi_h(x) = \varphi(x+h) + \|\varphi\|_{C^{0,1}}|h| \geq \varphi$. Then, by Theorem \ref{thm:comparison_obstacle}, $u \leq w_h$ for all $h$, and it follows that
$$u(x,t) \leq u(x+h,t) + \|\varphi\|_{C^{0,1}}|h| \quad \Rightarrow \quad \frac{u(x,t) - u(x+h,t)}{|h|} \leq \|\varphi\|_{C^{0,1}}.$$
Since $x$ and $h$ are arbitrary, the Lipschitz regularity follows.

On the other hand, concerning $u_t$, it is zero in the interior of the contact set, and outside of it $u_t = -Lu$. Moreover, since $u$ is continuous, the contact set is closed and we can estimate the Lipschitz character of $u$ in the $t$ direction knowing it outside of the contact set. Hence, $\|u_t\|_{L^\infty(\R^n\times(0,T))} \leq \|Lu\|_{L^\infty(\R^n\times(0,T))}$. Then, we can compute $Lu$. We omit the time dependence to unclutter the notation.
\begin{align*}
    |Lu(x)| &= \left|\int_{\R^n}(u(x) - u(x+y))K(y)\mathrm{d}y\right|\\
    &\leq  \left|\int_{B_1}(u(x) - u(x+y))K(y)\mathrm{d}y\right|
    + \left|\int_{B_1^c}(u(x) - u(x+y))K(y)\mathrm{d}y\right|\\
    &\leq \int_{B_1}\|\nabla u\|_{L^\infty(\R^n\times(0,T))}|y|K(y)\mathrm{d}y + \int_{B_1^c}2\|u\|_{L^\infty(\R^n\times(0,T))}K(y)\mathrm{d}{y}\\
    &\leq C_1\|\nabla u\|_{L^\infty(\R^n\times(0,T))} + C_2\|u\|_{L^\infty(\R^n\times(0,T))} \leq C\|\varphi\|_{C^{0,1}(\R^n)}.
\end{align*}
Here we used that $K(y) \leq \Lambda|y|^{-n-2s}$ and $s < \frac{1}{2}$, so that $K(y)$ is integrable at infinity and $|y|K(y)$ is integrable near the origin, and finally we applied the previous estimates for $\|\nabla u\|_{L^\infty}$ and $\|u\|_{L^\infty}$ in terms of $\|\varphi\|_{C^{0,1}}$.
\end{proof}

Then, we improve the regularity up until $C^{1,\alpha}$ in $t$ and $C^1$ in $x$. We start with the time regularity.

\begin{prop}\label{prop:t_C1alpha}
Let $s \in (0,\frac{1}{2})$, and let $u$ be the solution of (\ref{eq:prev_problem}) with $L$ an operator satisfying (\ref{eq:operator}) and (\ref{eq:operator_elliptic}), and $\varphi \in C^{0,1}_c(\R^n)$. Then, $u_t \in C^{\alpha}$ and
$$[u_t]_{C^\alpha(\R^n\times(0,T))} \leq C\|\varphi\|_{C^{0,1}(\R^n)},$$
where $\alpha = 1 - 2s > 0$ and $C$ depends only on the dimension, $s$ and the ellipticity constants. Moreover, we have
$$u_t = (Lu)^- \quad \text{in} \quad \R^n\times(0,T).$$
\end{prop}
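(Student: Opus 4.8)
\textbf{Proof proposal for Proposition \ref{prop:t_C1alpha}.}

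The plan is to upgrade the $L^\infty$ bound on $u_t$ from Proposition \ref{prop:global_lip} to $C^\alpha$ regularity with $\alpha=1-2s$, exploiting that in the supercritical regime $L$ has order $2s<1$, so $Lu$ inherits H\"older regularity directly from the Lipschitz (in $x$) bound on $u$, with no parabolic smoothing needed. First I would establish the pointwise identity $u_t=(Lu)^-$ everywhere in $\R^n\times(0,T)$: inside $\{u=\varphi\}^\circ$ we have $u_t=0$ and $Lu\geq 0$ (since $\p_tu+Lu\geq 0$ and $u_t=0$ there), so $(Lu)^-=0=u_t$; in the open set $\{u>\varphi\}$ we have $\p_tu+Lu=0$, hence $u_t=-Lu$, and by Lemma \ref{lem:derivada_positiva} $u_t>0$ there, so $Lu<0$ and $-Lu=(Lu)^-$; finally on the (closed, measure-theoretically negligible boundary) free boundary the identity passes to the limit by continuity of both sides, using that $u_t$ is continuous (which at this stage we know up to the $C^\alpha$ bound we are about to prove, or can argue via the two representations agreeing on a dense set). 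So it suffices to bound $[(Lu)^-]_{C^\alpha}$, and since $z\mapsto z^-$ is $1$-Lipschitz, it suffices to bound $[Lu]_{C^\alpha(\R^n\times(0,T))}$.

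For the spatial H\"older seminorm of $Lu(\cdot,t)$, fix $t$ and two points $x_1,x_2$ with $r:=|x_1-x_2|\leq 1$. Split the defining integral at radius $\rho$ (to be optimized, e.g. $\rho\asymp r$): write
$$Lu(x_1)-Lu(x_2)=\int_{B_\rho}\big[(u(x_1)-u(x_1+y))-(u(x_2)-u(x_2+y))\big]K(y)\,\mathrm{d}y + \int_{B_\rho^c}(\cdots)K(y)\,\mathrm{d}y.$$
On the near piece, bound the bracket by $2\|\nabla u\|_{L^\infty}|y|$ (it is a difference of increments, but the crude Lipschitz bound on each of the two terms, each of size $\|\nabla u\|_{L^\infty}|y|$, already suffices) so the near integral is $\lesssim \|\nabla u\|_{L^\infty}\int_{B_\rho}|y|^{1-n-2s}\,\mathrm{d}y\asymp \|\nabla u\|_{L^\infty}\,\rho^{1-2s}$. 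On the far piece, bound the bracket by $2\|\nabla u\|_{L^\infty}|x_1-x_2|=2\|\nabla u\|_{L^\infty}r$, giving $\lesssim \|\nabla u\|_{L^\infty}\, r\int_{B_\rho^c}|y|^{-n-2s}\,\mathrm{d}y\asymp \|\nabla u\|_{L^\infty}\, r\,\rho^{-2s}$. Choosing $\rho=r$ yields both terms $\lesssim \|\nabla u\|_{L^\infty}\,r^{1-2s}=\|\nabla u\|_{L^\infty}\,r^{\alpha}$, and since $r\leq 1$ the case $r>1$ follows from the $L^\infty$ bound on $Lu$ established in Proposition \ref{prop:global_lip}. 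Combined with $\|\nabla u\|_{L^\infty}\leq\|\varphi\|_{C^{0,1}}$ this gives $[Lu(\cdot,t)]_{C^\alpha_x}\leq C\|\varphi\|_{C^{0,1}}$ uniformly in $t$.

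For the H\"older seminorm in $t$, I would use $u_t=(Lu)^-$ once more together with the already-proven bound $\|u_t\|_{L^\infty}\leq C\|\varphi\|_{C^{0,1}}$: for $|t_1-t_2|$ small we have $|Lu(x,t_1)-Lu(x,t_2)|\leq \int_{\R^n}|u(x,t_1)-u(x,t_2)-(u(x+y,t_1)-u(x+y,t_2))|K(y)\,\mathrm{d}y$, and one splits at a radius $\rho$ again, this time bounding the bracket on $B_\rho^c$ by $2\|u_t\|_{L^\infty}|t_1-t_2|$ and on $B_\rho$ by $\min\{2\|u_t\|_{L^\infty}|t_1-t_2|,\ 2\|\nabla u\|_{L^\infty}|y|\}$ — but actually the cleaner route is to observe that the time-increment of $u$ is itself Lipschitz in $x$ with constant $\lesssim\|\nabla u\|_{L^\infty}$ and bounded by $\|u_t\|_{L^\infty}|t_1-t_2|$, so running the identical near/far split with $\rho\asymp |t_1-t_2|^{1/1}$ interpolated against these two bounds produces $|Lu(x,t_1)-Lu(x,t_2)|\lesssim \|\varphi\|_{C^{0,1}}|t_1-t_2|^{1-2s}$, i.e. the same exponent $\alpha$. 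Then $[u_t]_{C^\alpha}\le [(Lu)^-]_{C^\alpha}\le[Lu]_{C^\alpha}\le C\|\varphi\|_{C^{0,1}}$ in the full parabolic metric.

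The main obstacle I anticipate is the rigorous justification of the identity $u_t=(Lu)^-$ and of differentiating the equation, since a priori $u$ is only Lipschitz and $u_t$ only known in $L^\infty$: one must argue that $u_t$ has a continuous representative before the identity makes pointwise sense on the free boundary. I would handle this by working first with the penalized approximations $u^\varepsilon$ from Lemma \ref{lem:penalization} (for which $\p_tu^\varepsilon+Lu^\varepsilon=\beta_\varepsilon(u^\varepsilon-\varphi)\geq 0$ is smooth and the near/far splitting estimates above apply verbatim, giving uniform-in-$\varepsilon$ $C^\alpha$ bounds on $Lu^\varepsilon$ hence on $u^\varepsilon_t$), and then passing to the limit: the uniform $C^\alpha$ bound gives compactness, $u^\varepsilon_t\to u_t$ locally uniformly, $Lu^\varepsilon\to Lu$ pointwise (justified by the weighted $L^1_s$ control and the uniform Lipschitz bound), and $\beta_\varepsilon(u^\varepsilon-\varphi)\to (Lu)^-$ because $\beta_\varepsilon\geq 0$ forces the limit to be $\geq 0$, it vanishes wherever $u>\varphi$ in the limit, and it equals $-Lu$ there. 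This also disposes of the measure-zero free boundary issue since everything is proved for the smooth $u^\varepsilon$ and then inherited in the limit.
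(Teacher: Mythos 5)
Your proof is correct and follows essentially the same route as the paper: the same near/far splitting of the integral defining $Lu$ at radius $|x_1-x_2|$ (resp.\ $|t_1-t_2|$) using the Lipschitz bounds of Proposition \ref{prop:global_lip}, and the same sign analysis (Lemma \ref{lem:derivada_positiva} in $\{u>\varphi\}$, $\p_t u + Lu \ge 0$ in the contact set, continuity of $Lu$ across the free boundary) to conclude $u_t=(Lu)^-$ and $[u_t]_{C^\alpha}\le[Lu]_{C^\alpha}$. Only note that, as you yourself anticipate, the $C^\alpha$ (hence continuity) bound on $Lu$ should be proved \emph{before} the identity, and the penalization detour in your final paragraph is unnecessary and slightly off as stated: the limit of $\beta_\varepsilon(u^\varepsilon-\varphi)$ is $(Lu)^+$ rather than $(Lu)^-$, and a uniform $C^\alpha$ bound on $Lu^\varepsilon$ does not transfer to $u^\varepsilon_t$ uniformly in $\varepsilon$ because of the penalization term.
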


\begin{proof}
Let us prove the following estimates for $Lu$ to begin. We prove the spatial regularity first, omitting the time dependence for simplicity of reading.
\begin{align*}
    |Lu(x_1) - Lu(x_2)| &= \left|\int_{\R^n}(u(x_1)-u(x_2)-u(x_1+y)+u(x_2+y))K(y)\mathrm{d}y\right|\\
    &\leq \int_{B_r}\left(|u(x_1) - u(x_1+y)| + |u(x_2) - u(x_2 + y)|\right)K(y)\mathrm{d}y\\
    &+ \int_{B_r^c}\left(|u(x_1) - u(x_2)| + |u(x_1+y) - u(x_2+y)|\right)K(y)\mathrm{d}y\\
    &\leq \left(\int_{B_r}2|y|K(y)\mathrm{d}y + \int_{B_r^c}2|x_1-x_2|K(y)\mathrm{d}y\right)\|\nabla u\|_{L^\infty(\R^n\times(0,T))}\\
    &\leq C(r^{1-2s} + |x_1-x_2|r^{-2s})\|\nabla u\|_{L^\infty(\R^n\times(0,T))}\\
    &\leq C\|\varphi\|_{C^{0,1}(\R^n\times(0,T))}|x_1-x_2|^{1-2s}.
\end{align*}
In the last steps we used that $|K(y)| \leq \Lambda|y|^{-n-2s}$, with $s \in (0,\frac{1}{2})$, we chose $r = |x_1 - x_2|$ and we used the estimate from Proposition \ref{prop:global_lip}.

Then, we prove temporal regularity:

\begin{align*}
    &|Lu(x,t_1) - Lu(x,t_2)|\\
    &= \left|\int_{\R^n}(u(x,t_1)-u(x,t_2)-u(x+y,t_1)+u(x+y,t_2))K(y)\mathrm{d}y\right|\\
    &\leq \int_{B_r}\left(|u(x,t_1) - u(x+y,t_1)| + |u(x,t_2) - u(x + y,t_2)|\right)K(y)\mathrm{d}y\\
    &+ \int_{B_r^c}\left(|u(x,t_1) - u(x,t_2)| + |u(x+y,t_1) - u(x+y,t_2)|\right)K(y)\mathrm{d}y\\
    &\leq \int_{B_r}2|y|K(y)\|\nabla u\|_{L^\infty(\R^n\times(0,T))}\mathrm{d}y + \int_{B_r^c}2|t_1-t_2|K(y)\|u_t\|_{L^\infty(\R^n\times(0,T))}\mathrm{d}y\\
    &\leq C\|\varphi\|_{C^{0,1}(\R^n\times(0,T))}|t_1-t_2|^{1-2s}.
\end{align*}
Here $r = |t_1 - t_2|$ and the rest of the estimates are used analogously.

Hence, $[Lu]_{C^\alpha(\R^n\times(0,T))} \leq C\|\varphi\|_{C^{0,1}(\R^n)}$. In particular, $Lu$ is continuous. Then, recall that $u_t + Lu = 0$ in the set $\{u > \varphi\}$. Moreover, by Lemma \ref{lem:derivada_positiva}, $u_t > 0$ in this set, and therefore $Lu < 0$.

In the interior of the contact set, however, $u(x,t) \equiv \varphi(x)$ and $u_t \equiv 0$. Moreover, $u_t + Lu \geq 0$, and it follows that $Lu \geq 0$ in the interior of the contact set. 

By continuity of $Lu$, $Lu = 0$ on the free boundary. Then, $u_t = 0$ on the free boundary as well.

We deduce that 
$$u_t = (Lu)^-$$
and thus $[u_t]_{C^\alpha(\R^n\times(0,T))} \leq [Lu]_{C^\alpha(\R^n\times(0,T))}$, as wanted.
\end{proof}

Then, we continue with the regularity in $x$. First, we need the following estimate, analogous to the elliptic estimate \cite[Lemma 2.3]{CRS17}.

\begin{lem}\label{lem:x_regularity}
Let $s \in (0,1)$. There exist constants $\tau \in (0,s)$ and $\delta > 0$ such that the following holds.

Let $v$ be a globally Lipschitz solution of
\begin{equation*}
\left\{\begin{array}{rclll}
v & \geq & 0 & \text{in} & \R^n\times(-1,0]\\
\p_{\nu\nu}v & \geq & -\delta & \text{in} & Q_2 \cap \{t \leq 0\}, \quad \text{for all} \quad \nu \in \mathbb{S}^{n-1}\\
(\p_t + L)(v-T_hv) & \leq & \delta|h| & \text{in} & \{v > 0\}\cap Q_2\cap \{t \leq 0\},
\end{array}\right.
\end{equation*}
where $T_h$ is the translation operator defined for any $h$ in the $x$ directions, i.e. $T_hv(x,t) = v(x+h,t)$, and $L$ is a nonlocal operator satisfying (\ref{eq:operator}) and (\ref{eq:operator_elliptic}).

Assume that $v(0,t) = 0$ for $t \leq 0$ and that $\sup\limits_{Q_R}|\nabla v| \leq R^\tau$ for all $R \geq 1$. Then,
$$\sup\limits_{B_r\times(-r^{2s},0]}|\nabla v| \leq 2r^\tau,$$
for all $r > 0$. The constants $\tau$ and $\delta$ depend only on the dimension, $s$ and the ellipticity constants.
\end{lem}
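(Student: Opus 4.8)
The plan is to argue by contradiction and compactness, following the classical iteration scheme for this kind of a priori estimate. Suppose the conclusion fails; then there is a sequence of functions $v_k$ satisfying the hypotheses with $\delta = \delta_k \to 0$, but for which $\sup_{B_{r_k}\times(-r_k^{2s},0]}|\nabla v_k| > 2 r_k^\tau$ for some $r_k > 0$. Since the assumption $\sup_{Q_R}|\nabla v_k| \le R^\tau$ forces the conclusion for $r \ge 1$ (with room to spare), necessarily $r_k < 1$, and moreover the monotone quantity
$$
\theta(r) := \sup_{j \ge 0}\ (2^j r)^{-\tau}\,\sup_{B_{2^j r}\times(-(2^j r)^{2s},0]}|\nabla v_k|
$$
is finite for each fixed $r$ (it is controlled by the global bound $R^\tau$), tends to $2$ as $r \to \infty$, and exceeds $2$ at $r = r_k$. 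Hence one can choose a radius $\rho_k \le r_k$ and an index realizing (up to a factor $2$) the supremum, and rescale: set
$$
w_k(x,t) := \frac{v_k(\rho_k x,\ \rho_k^{2s} t)}{\rho_k^{\tau}\,\theta_k}, \qquad \theta_k := \theta(\rho_k),
$$
so that $w_k(0,t)=0$ for $t\le 0$, $\sup_{Q_1}|\nabla w_k|$ is comparable to $1$ (say $\ge 1/2$ after the choice), while by construction $\sup_{Q_R}|\nabla w_k| \le C R^\tau$ for all $R \ge 1$. The supercritical parabolic scaling $Q_r = B_r \times (t_0-r^{2s},t_0+r^{2s})$ is exactly the one under which $(\p_t + L)$ is invariant, which is why this rescaling works and is where the hypothesis $s<\tfrac12$ is not even needed — only $L$ of order $2s$ — so I will keep $s\in(0,1)$ as stated.

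Next I would pass to the limit. The functions $w_k$ have locally uniformly bounded gradients with the polynomial growth $R^\tau$, $\tau < s \le 1$, so up to a subsequence $w_k \to w_\infty$ locally uniformly in $\R^n\times(-\infty,0]$, with $w_\infty(0,t)=0$, $w_\infty \ge 0$, $w_\infty$ semiconvex in all directions (the $-\delta_k \to 0$ bound passes to the limit as genuine convexity: $\p_{\nu\nu}w_\infty \ge 0$), and $|\nabla w_\infty(x,t)| \le C(1+|x|)^\tau$ with $\sup_{Q_1}|\nabla w_\infty| \ge 1/2$. The rescaled third hypothesis becomes, in the limit $h \to 0$ after dividing by $|h|$, the statement that $\p_e w_\infty$ is $L$-caloric (satisfies $(\p_t+L)(\p_e w_\infty) = 0$) in $\{w_\infty > 0\}$, for every spatial direction $e$; here one must be a little careful and argue at the level of the incremental quotients $(w_\infty - T_h w_\infty)/|h|$, which are $L$-subcaloric with vanishing right-hand side, then use interior estimates for the linear nonlocal parabolic equation from Appendix \ref{sect:appendix_heat} to upgrade. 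Either way, $w_\infty$ is a global, convex-in-space, nonnegative solution of the obstacle-type problem, vanishing along $\{0\}\times(-\infty,0]$, with gradient growing slower than linearly.

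The heart of the argument is then a Liouville-type classification: I claim such a $w_\infty$ must have $\nabla w_\infty \equiv 0$, contradicting $\sup_{Q_1}|\nabla w_\infty|\ge 1/2$. Convexity in $x$ plus sublinear gradient growth ($\tau < 1$) already forces $w_\infty$ to be affine in $x$ for each fixed $t$ on the open set where it is caloric; combined with $w_\infty \ge 0$ and $w_\infty(0,t)=0$ this pins $w_\infty(\cdot,t)$ down to a nonnegative linear function, i.e. $w_\infty(x,t) = b(t)\cdot x$ with $b(t)\cdot x \ge 0$ everywhere, which is impossible unless $b(t)=0$ — unless the positivity set $\{w_\infty>0\}$ is not all of space, in which case one runs the same convexity/nonnegativity argument slice by slice and uses that the contact set contains the line $\{0\}\times(-\infty,0]$ to kill the gradient there too, then propagates via the equation. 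This is exactly the step I expect to be the main obstacle: making the Liouville classification rigorous in the nonlocal setting, where "affine in $x$ on an open set" does not immediately globalize because $L$ sees all of $\R^n$, so I would need to combine the convexity, the growth control $|\nabla w_\infty|\lesssim (1+|x|)^\tau$, and the nonlocal equation carefully — most cleanly by showing every spatial derivative $\p_e w_\infty$ is a bounded (by the growth bound and convexity, $\p_e w_\infty$ is both bounded above and, by convexity along $-e$, its negative part is controlled) $L$-caloric function on a large set, hence constant by a nonlocal parabolic Liouville theorem, and then that this constant is zero because $\p_e w_\infty$ vanishes on the contact line. Once the contradiction is reached, the lemma follows with the stated $\tau \in (0,s)$ and $\delta > 0$ coming out of the compactness argument, depending only on $n$, $s$, $\lambda$, $\Lambda$.
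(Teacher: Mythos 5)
Your strategy (contradiction, rescaling at a near-extremal scale, blow-up, classification of the limit) is genuinely different from the paper's, but it has a real gap exactly at the point you flag: the Liouville-type classification of $w_\infty$ is not proved, and the ingredients you propose for it do not work as stated. First, convexity in $x$ together with the gradient growth $|\nabla w_\infty|\lesssim(1+|x|)^\tau$, $\tau<1$, does \emph{not} force $w_\infty(\cdot,t)$ to be affine: $x\mapsto|x|^{1+\tau}$ is convex, has precisely this gradient growth, and is not affine, so that step of your sketch is simply false. Second, the fallback route ``each $\p_e w_\infty$ is a bounded $L$-caloric function, hence constant'' fails on both counts: $\p_e w_\infty$ is only $O(|x|^\tau)$, not bounded (convexity gives no upper bound on the gradient), and, more seriously, the equation $(\p_t+L)\p_e w_\infty=0$ holds only in the positivity set $\{w_\infty>0\}$, an unknown open set whose complement contains the line $\{0\}\times(-\infty,0]$ but may be much larger; no global parabolic Liouville theorem applies to a function that solves the equation only there, and since $L$ is nonlocal you cannot localize. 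Classifying global convex solutions of obstacle-type limit problems is precisely the hard step in \cite{CRS17,BFR18}, and in this supercritical parabolic setting (with general kernels) no such classification is available off the shelf; this is the missing idea, not a routine verification. There are also two structural problems in the compactness set-up itself: the extremal radii $\rho_k$ need not tend to zero (from $\theta_k(\rho_k)>2$ and the hypothesis $\sup_{Q_R}|\nabla v_k|\leq R^\tau$ one only gets $\rho_k<2^{-1/\tau}$), so the rescaled semiconvexity and equation may hold only in a fixed bounded cylinder and the blow-up is not global, in which case no Liouville argument can even be formulated; and the nondegeneracy $\sup_{Q_1}|\nabla w_\infty|\geq\frac12$ does not pass to the limit under locally uniform convergence alone (it can be rescued via the vanishing semiconvexity constants, by tracking the growth of $w_k$ along the gradient direction at a near-maximal point, but this must be argued).

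For contrast, the paper never passes to a limit: it runs a direct quantitative argument in the spirit of \cite[Lemma 2.3]{CRS17}. One picks a near-extremal scale $r_0<1$ for $\theta(r):=\sup_{r'\geq r}(r')^{-\tau}\sup_{B_{r'}\times(-(r')^{2s},0]}|\nabla v|$, rescales, adds a small multiple of a cutoff, $w+3\eps\eta$ with $w=(v_0-T_{h_0}v_0)/|h_0|$, and evaluates $(\p_t+\tilde L)w$ at a maximum point $(x_0,t_0)$: the vanishing of $v_0$ on $\{0\}\times(-\infty,0]$ combined with the $\delta$-semiconvexity forces $w<\frac12$ on a fixed cone of directions, giving $\tilde Lw(x_0,t_0)\geq c-C\eps$ (the tail being absorbed by taking $\tau$ small), while $\p_t(w+3\eps\eta)\geq0$ at the maximum, so $(\p_t+\tilde L)w(x_0,t_0)>\delta$, contradicting the hypothesis. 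If you want to keep your blow-up scheme you must supply the classification result (and handle the non-degenerating-radii case separately), which is a substantial piece of work; otherwise the quantitative maximum-principle argument is the way this lemma is actually proved.
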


\begin{proof}
Let $W_r = B_r \times (-r^{2s},0]$ be the \textit{past cyilinders at the origin}.

We define
$$\theta(r) := \sup\limits_{r' \geq r}(r')^{-\tau}\sup\limits_{W_{r'}}|\nabla v|.$$

Notice that $\theta(r) \leq 1$ for $r \geq 1$ because $\sup\limits_{W_R}|\nabla v| \leq \sup\limits_{Q_R}|\nabla v| \leq R^\tau$ for $R \geq 1$. The result we aim to prove is equivalent to showing $\theta(r) \leq 2$ for all $r \in (0,1)$. Observe also that $\theta$ is nonincreasing by definition.

Assume by contradiction that $\theta(r) > 2$ for some $r$. Then, by construction there exists $r_0 \in (r,1)$ such that
$$\theta(r_0) \geq r_0^{-\tau}\sup_{W_{r_0}}|\nabla v| \geq (1-\varepsilon)\theta(r) \geq (1-\varepsilon)\theta(r_0) \geq \frac{3}{2},$$
where $\varepsilon > 0$ is to be chosen later.

Then, we define the scaling
$$v_0(x,t) := \frac{v(r_0x,r_0^{2s}t)}{\theta(r_0)r_0^{1+\tau}}.$$
Let $\tau \in (0,s)$. Then, the rescaled function satisfies
\begin{equation*}
\left\{\begin{array}{rclllll}
v_0 & \geq & 0 &&& \text{in} & \R^n\times(-2r_0^{-2s},0]\\
\p_{\nu\nu} v_0 & \geq & -r_0^{2-1-\tau}\delta & \geq & -\delta & \text{in} & Q_{2/r_0}\\
(\p_t + \tilde L)(v_0-T_hv_0) & \leq & r_0^{2s-1-\tau}\delta|r_0h| & \leq & \delta|h| & \text{in} & \{v_0 > 0\}\cap Q_{2/r_0},
\end{array}\right.
\end{equation*}
where $\tilde L$ is the corresponding nonlocal operator with the appropriate scaled kernel, and it has the same ellipticity constants. Notice that $\|\nabla v_0\|_{L^\infty(W_1)} \leq 1$ by construction.

Moreover, by the definition of $\theta$ and $r_0$, for all $R \geq 1$ the following estimates hold:
$$1 - \varepsilon \leq \sup\limits_{|h| \leq \frac{1}{4}}\sup\limits_{W_1}\frac{v_0(x,t) - v_0(x+h,t)}{|h|} \quad \text{and} \quad \sup\limits_{|h| \leq \frac{1}{4}}\sup\limits_{W_R}\frac{v_0 - T_hv_0}{|h|} \leq (R + \frac{1}{4})^\tau.$$

Let $\eta \in C^2_c(Q_{3/2})$ with $\eta \equiv 1$ in $Q_1$ and $0 \leq \eta \leq 1$. Then,
$$\sup\limits_{|h| \leq \frac{1}{4}}\sup\limits_{W_1}\left(\frac{v_0 - T_hv_0}{|h|} + 3\varepsilon\eta\right) \geq 1 + 2\varepsilon.$$

Notice that if $\tau > 0$ is small enough,
$$\sup\limits_{|h| \leq \frac{1}{4}}\sup\limits_{W_3}\frac{v_0 - T_hv_0}{|h|} \leq (3+\frac{1}{4})^\tau < 1 + \varepsilon.$$

Then, we can choose $h_0 \in B_{1/4}$ such that
$$M := \max\limits_{W_{3/2}}\left(\frac{v_0 - T_{h_0}v_0}{|h_0|} + 3\varepsilon\eta\right) \geq 1 + \varepsilon,$$
and the maximum is attained at a point $(x_0,t_0)$ where $\eta(x_0,t_0) > 0$.

Define
$$w := \frac{v_0 - T_{h_0}v_0}{|h_0|}.$$

By construction, $w + 3\varepsilon\eta \leq M$ in $W_{3/2}$ and in $W_3 \setminus W_{3/2}$. Therefore, $w + 3\varepsilon\eta \leq M$ in $Q_3 \cap \{t \leq 0\}$. Besides, $v_0(x_0,t_0) > 0$ because if not $w(x_0,t_0) < 0$ and then $w+3\varepsilon\eta < 1 + \varepsilon$.

Now we evaluate the equation at $(x_0,t_0)$ to obtain a contradiction.

On the one hand, since $(x_0,t_0)$ is a maximum of $w + 3\varepsilon\eta$, and $(x_0,t_0)$ is either an interior point of $W_{3/2}$ or a point in $B_{3/2}\times\{0\}$,
$$\p_t(w+3\varepsilon\eta) \geq 0.$$

On the other hand, we can use the semiconvexity of $v_0$, together with $v_0(0,t) = 0$ for $t \leq 0$ to obtain a lower bound for $\tilde Lw$. Let $e = \frac{h_0}{|h_0|}$ and $k = |h_0|$. Then, for $x \in B_1$ and omitting the dependence on $t$,
$$v_0(x) \leq \frac{kv_0(0) + |x|v_0\left(x + k\frac{x}{|x|}\right)}{k + |x|} + \frac{k\delta}{2}|x|^2 \leq v_0\left(x + k\frac{x}{|x|}\right) + \delta,$$
using that $|x| < 1$ and $k < 1$. Then, combining this fact with the definition of $w$,
$$w(x) = \frac{v_0(x) - v_0(x+ke)}{k} \leq \frac{v_0\left(x + k\frac{x}{|x|}\right) - v_0(x + ke)}{k} + \delta \leq \left|\frac{x}{|x|} + e\right| + \delta,$$
for all $x \in B_1$, where we also used that $\|\nabla v_0\|_{L^\infty(W_1)} \leq 1$. In particular, $w(x,t) < \frac{1}{2}$ for all $t \leq 0$ when $\delta < \frac{1}{4}$ and
$$x \in C_e := \left\{x \in B_1 : \left|\frac{x}{|x|} + e\right| < \frac{1}{4}\right\}.$$

Using that $M \geq 1 + \varepsilon$ and $w < 1 + \varepsilon$ in $W_3$,
$$1 - 2\varepsilon \leq w(x_0,t_0) < 1 + \varepsilon.$$

Moreover, $w + 3\varepsilon\eta$ has a maximum at $(x_0,t_0)$ (global in $B_3\times\{t_0\}$), and hence 
$$w(x_0,t_0) - w(x,t_0) \geq -3\varepsilon|D^2\eta(x_0,t_0)|\frac{|x-x_0|^2}{2} = -C\varepsilon|x-x_0|^2,$$
for all $x \in B_3$.

Let us now compute $\tilde Lw$ at the point $(x_0,t_0)$. Using the previous estimates,

\begin{align*}
\tilde Lw(x_0,t_0) = &\int_{\R^n}(w(x_0,t_0) - w(x_0+y,t_0))K(y)\mathrm{d}y\\
\geq &\,\lambda\int_{\R^n}(w(x_0,t_0) - w(x_0+y,t_0))_+|y|^{-n-2s}\mathrm{d}y\\
&- \Lambda\int_{\R^n}(w(x_0,t_0) - w(x_0+y,t_0))_-|y|^{-n-2s}\mathrm{d}y\\
\geq &\,\lambda\int_{C_e - x_0}(\frac{1}{2} - 2\varepsilon)|y|^{-n-2s}\mathrm{d}y - \Lambda\int_{B_{3/2}}C\varepsilon|y|^2|y|^{-n-2s}\mathrm{d}y\\
&-\Lambda\int_{B_{3/2}^c}((|y|+\frac{3}{2})^\tau - 1 + 2\varepsilon)|y|^{-n-2s}\mathrm{d}y\\
\geq &\,c - C\varepsilon - \Lambda\int_{B_{3/2}^c}((|y|+\frac{3}{2})^\tau - 1)|y|^{-n-2s}\mathrm{d}y \geq c - C\varepsilon,
\end{align*}
where in the last step we choose $\tau > 0$ even smaller if needed to absorb the integral into the $C\varepsilon$ term.

Finally, 
$$(\p_t + \tilde L)w(x_0,t_0) \geq -3\varepsilon\eta_t(x_0,t_0) + c - C\varepsilon > \delta,$$
choosing small enough $\varepsilon$ and $\delta$, reaching a contradiction. Hence, $\theta(r) \leq 2$ for all $r \in (0,1)$, as we wanted to prove.
\end{proof}

Now we can apply Lemma \ref{lem:x_regularity} to obtain $C^1$ regularity.

\begin{prop}\label{prop:grad_cont}
Let $s \in (0,\frac{1}{2})$, and let $u$ be the solution of (\ref{eq:prev_problem}) with $L$ an operator satisfying (\ref{eq:operator}) and (\ref{eq:operator_elliptic}), and $\varphi \in C^{2,1}_c(\R^n)$. Then, $\nabla u \in C(\R^n\times(0,T))$. In particular, $u \in C^1(\R^n\times(0,T))$.
\end{prop}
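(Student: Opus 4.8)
The plan is to deduce $C^1$ regularity in $x$ from Lemma \ref{lem:x_regularity} applied at free boundary points, combined with interior estimates in the open sets $\{u>\varphi\}$ and $\mathrm{int}\{u=\varphi\}$. Write $v = u - \varphi$. On the non-coincidence set $\{v>0\}$, $v$ solves a linear nonlocal parabolic equation $\p_t v + Lv = -\p_t\varphi - L\varphi =: f$ with $f \in C^{\alpha}_x$ (since $\varphi \in C^{2,1}_c$ and $L$ has order $2s<1$, so $L\varphi$ is bounded and H\"older), hence interior parabolic regularity from Appendix \ref{sect:appendix_heat} gives $\nabla v$ continuous there; on the interior of the contact set $v \equiv 0$ so $\nabla v = 0$ and $\nabla u = \nabla\varphi$ is continuous. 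Thus the only points where continuity of $\nabla u$ is in question are the free boundary points, and since $\varphi$ is $C^1$ it suffices to show $\nabla v$ is continuous up to the free boundary, with the natural candidate value $\nabla v = 0$ there.

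The key step is therefore: if $(x_0,t_0)$ is a free boundary point, then $|\nabla v(x,t)| \to 0$ as $(x,t) \to (x_0,t_0)$, quantitatively $\sup_{Q_r(x_0,t_0)}|\nabla v| \le C r^{\tau}$ for the $\tau \in (0,s)$ from Lemma \ref{lem:x_regularity}. To get this I would verify the hypotheses of that lemma for suitable rescalings of $v$ centered at $(x_0,t_0)$. First, $v \ge 0$ everywhere. Second, $\p_{\nu\nu}v \ge -\hat C$ for all unit $\nu$ in $(x,t)$ by Proposition \ref{prop:semiconvex} (note $v$ and $u$ differ by $\varphi \in C^{2,1}$, so $v$ is semiconvex too). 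Third, for $h$ in the $x$-directions, $(\p_t+L)(v - T_hv) = f - T_hf = -(L\varphi - T_hL\varphi) - (\p_t\varphi - T_h\p_t\varphi)$ on $\{v>0\}$, and this is bounded by $C|h|$ because $L\varphi$ and $\p_t\varphi = 0$ are Lipschitz in $x$ (here again $2s<1$ is what makes $L\varphi$ Lipschitz when $\varphi\in C^{2,1}$). Fourth, $v(x_0,t) = 0$ for $t \le t_0$: this follows because $u_t \ge 0$ (Lemma \ref{lem:derivada_positiva}) forces the contact set to be "backward-invariant" in time through each spatial point, so a free boundary point at $(x_0,t_0)$ has $\{x_0\}\times(0,t_0]$ in the contact set where $v=0$. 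Fifth, the global growth bound $\sup_{Q_R}|\nabla v| \le R^\tau$ for $R\ge 1$ comes from the global Lipschitz bound of Proposition \ref{prop:global_lip} (so $|\nabla v| \le C$ everywhere) after an initial rescaling to make the constant $\le 1$, using $\tau>0$. Now rescale: for small $\rho>0$ set $\tilde v(x,t) = \rho^{-1-\tau}v(x_0+\rho x, t_0+\rho^{2s}t)$ (with the ellipticity constants preserved under the associated kernel scaling, and $\delta$-smallness of the right-hand sides achieved by shrinking $\rho$, exactly as in the proof of Lemma \ref{lem:x_regularity}); then Lemma \ref{lem:x_regularity} yields $\sup_{B_r\times(-r^{2s},0]}|\nabla \tilde v| \le 2r^\tau$, i.e. $\sup_{B_{\rho r}(x_0)\times(t_0-(\rho r)^{2s},t_0]}|\nabla v| \le 2\rho^{1+\tau}r^\tau = 2(\rho r)^\tau \rho$, which after iteration/undoing the scaling gives the desired decay $|\nabla v| = o(1)$ near $(x_0,t_0)$ from the past. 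The regularity from the future side $t>t_0$ near the free boundary is then handled by the interior estimate in $\{v>0\}$ together with matching boundary values, or by a one-sided barrier argument; since $\nabla v \to 0$ from below and $\nabla v$ is continuous in the open set above, continuity across the free boundary follows.

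The main obstacle I expect is the careful bookkeeping in verifying that $v$ (not $u$) satisfies all the structural hypotheses of Lemma \ref{lem:x_regularity} after rescaling — in particular controlling the "source" term $f - T_h f$ on $\{v>0\}$ uniformly and checking the condition $v(x_0,\cdot)\equiv 0$ in the past, which is where the monotonicity Lemma \ref{lem:derivada_positiva} enters essentially — and then patching the one-sided decay estimate into genuine two-sided continuity of $\nabla u$ at free boundary points, rather than just one-sided vanishing. The nonlocal term requires that the rescaled function still be globally controlled (the tail of $v$ in $L^1_s$), which is why the global Lipschitz and boundedness of Proposition \ref{prop:global_lip} are used to get the $R^\tau$ growth hypothesis before rescaling.
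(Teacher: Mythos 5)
There is a genuine gap, and it is exactly at the step you dismiss in one line: continuity of $\nabla u$ at a free boundary point $(x_0,t_0)$ \emph{from the future side} $t>t_0$. Lemma \ref{lem:x_regularity} only yields decay of $|\nabla v|$ in \emph{past} cylinders $B_r(x_0)\times(t_0-r^{2s},t_0]$ (your claim ``$\sup_{Q_r(x_0,t_0)}|\nabla v|\le Cr^\tau$'' overstates its conclusion), and your proposed fix for $t>t_0$ --- ``interior estimate in $\{v>0\}$ together with matching boundary values, or a one-sided barrier argument'' --- does not work as stated. Interior estimates in $\{v>0\}$ degenerate as one approaches $\p\{v>0\}$ and give no uniform modulus of continuity up to the free boundary; and knowing that $\nabla v$ is continuous in the open set $\{v>0\}$ while $\nabla v\to 0$ along $\{t\le t_0\}$ does not imply $\nabla v\to 0$ along sequences approaching $(x_0,t_0)$ with $t>t_0$. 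If the contact set persists near $x_0$ for slightly later times, one can recover the decay at time $t>t_0$ by applying the past-cylinder estimate centered at a contact point $(x',t)$ with $x'$ close to $x_0$ (this is how the paper handles that case, using $u_t\ge 0$ so that the contact set shrinks in time); but the genuinely hard case is when the contact set disappears after $t_0$ near $x_0$ (the free boundary ``ends'' at $(x_0,t_0)$), where no such contact points exist and no barrier for $v$ or $v_t$ directly controls $\nabla_x v$. The paper needs a substantively new argument there: using $u_t=(Lu)^-$ from Proposition \ref{prop:t_C1alpha} to write $(\p_t+L)u=(Lu)^+$ globally, representing $u_i$ by Duhamel's formula with the heat kernel, and exploiting the kernel bounds (Theorem \ref{thm:kernel_estimates}, Corollary \ref{cor:grad_kernel}) together with the sign and support information on $(Lu)^+$ and the bound $|u_i(\cdot,t_0)|\le C|\cdot-x_0|^\tau$ to conclude $|u_i(x,t)|\lesssim (t-t_0)^{\tau/2s}+|x-x_0|^\tau$ for $t>t_0$. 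Your proposal contains no substitute for this step, so it does not prove two-sided continuity of $\nabla u$ at such free boundary points.

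Two smaller issues in the part you did carry out. First, your rescaling $\tilde v=\rho^{-1-\tau}v(x_0+\rho\,\cdot,\,t_0+\rho^{2s}\cdot)$ with $\rho$ small violates the growth hypothesis of Lemma \ref{lem:x_regularity}: $\sup_{Q_R}|\nabla\tilde v|=\rho^{-\tau}\sup_{Q_{\rho R}}|\nabla v|$, which is large, not $\le R^\tau$; the correct normalization is to divide $v$ by a large constant (and fix a spatial scale so the cylinder sits in $\{t>0\}$), which also makes the semiconvexity and source-term constants smaller than $\delta$. Second, on $\{v>0\}$ one only has the one-sided bound $(\p_t+L)(v-T_hv)\le -L\varphi+T_hL\varphi\le C|h|$ (since $T_hv$ is merely a supersolution there), not the equality $f-T_hf$ you wrote; this happens to be all the lemma requires, but the equality is not correct. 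These are fixable; the missing future-side argument is not.
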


\begin{proof}
First, by Proposition \ref{prop:t_C1alpha}, $u_t$ is already continuous, and by Proposition \ref{prop:global_lip}, $\nabla u$ is globally defined in $L^\infty$. We will prove that it is continuous at every point.

In the interior of the contact set, $u(x,t) \equiv \varphi(x) \in C^1$, and in the interior of $\{u > \varphi\}$, we can use interior estimates (Proposition \ref{prop:interior_alphabeta}) to see that $u$ is $C^1$.

Therefore, we only need to work with the points on the free boundary. Assume without loss of generality that the origin is a free boundary point, and we will prove that $\nabla u$ is continuous at it.

Let $v = u - \varphi$. After a scaling and a translation, we can apply Lemma \ref{lem:x_regularity} to obtain
$$\sup \limits_{B_R(x_0)\times(t_0-R^{2s},t_0]}|\nabla v| \leq CR^\tau,$$
for all $R \geq 0$. The constant $C$ here depends only on $\varphi$, the dimension, $s$ and the ellipticity constants.

We distinguish two cases:

\textit{Case 1.} If \textit{the free boundary continues to the future}, more precisely, for all $\rho \in (0,r)$, there exists $t_\rho > 0$ such that
$$\{v = 0\}\cap(B_\rho\times\{t_\rho\}) \neq \emptyset,$$
it follows that for all $t \in (0,t_\rho)$, $\{v = 0\}\cap(B_\rho\times\{t\}) \neq \emptyset$, because $u_t \geq 0$ and therefore the contact set shrinks in time.

Let $\delta \in (0,r)$. Let $|x| < \delta$, and $t < t_\delta$ as defined above. Then, there exists $x' \in B_\delta$ such that $(x',t)$ belongs to the contact set, and it follows that
$$|\nabla v(x,t)| \leq C|x-x'|^\tau \leq C(2\delta)^\tau.$$

Then, letting $\delta \rightarrow 0$, we obtain a sequence of neighbourhoods of the origin where $|\nabla v| \leq C(2\delta)^\tau$, and hence $\nabla v$ vanishes continuously at $(0,0)$.

\textit{Case 2.} If \textit{the free boundary ends at the origin}, there exists some $r_0 > 0$ such that for all $t > 0$, $v > 0$ in $B_{r_0}\times\{t\}$. Assume after a scaling that $r_0 = 1$ (notice that $L$ may change but the ellipticity constants will be the same). We will prove that the limit of $v_i$ is zero as it approaches the origin. If we approach from the past, then $(0,-t)$ belongs to the contact set for all $t > 0$, and we can use the same argument that in Case 1.

To consider approaching the origin from the future, recall that $u$ solves $u_t = (Lu)^-$ globally, hence, we can consider $u$ a solution of the nonlocal heat equation with right hand side
$$(\p_t + L)u = (Lu)^+ \quad \text{in} \quad \R^n\times(0,T')$$
and apply Duhamel's formula at $(x,t)$ with $x \in B_{1/2}$ and $t \in (0,\frac{1}{2})$, to get
$$u(x,t) = \int_{\R^n}p_t(x-y)u(y,0)\mathrm{d}y + \int_{0}^t\int_{\R^n}p_{t-\zeta}(x-y)(Lu)^+(y,\zeta)\mathrm{d}y\mathrm{d}\zeta,$$
where $p_t(x)$ is the fundamental solution for this particular operator (see Theorem \ref{thm:kernel_estimates}). Then, differentiating with respect to $x_i$ and using that $p_t \in C^\infty$ and $u$ is Lipschitz,
$$u_i(x,t) = \int_{\R^n}p_t(x-y)u_i(y,0)\mathrm{d}y + \int_{0}^t\int_{\R^n}\p_ip_{t-\zeta}(x-y)(Lu)^+(y,\zeta)\mathrm{d}y\mathrm{d}\zeta.$$

Now let us estimate both integrals separately. For the first one, we will use that $|u_i(y,0)| \leq C|y|^\tau$ by Lemma \ref{lem:x_regularity}, as well as $|p_t(x)| \leq C\min\{t^{-\frac{n}{2s}}, t|x|^{-n-2s}\}$ by Theorem \ref{thm:kernel_estimates}.
\begin{align*}
&\left|\int_{\R^n}p_t(x-y)u_i(y,0)\mathrm{d}y\right| \lesssim \int_{\R^n}\min\{1,|y|^\tau\}\min\left\{t^{-\frac{n}{2s}},\frac{t}{|x-y|^{n+2s}}\right\}\mathrm{d}y\\
&\lesssim \int_{B_{t^{\frac{1}{2s}}/2}(x)}t^{-\frac{n}{2s}}|y|^\tau\mathrm{d}y + \int_{B_{1/2}(x)\setminus B_{t^{\frac{1}{2s}}/2}(x)}\frac{t|y|^\tau}{|x-y|^{n+2s}}\mathrm{d}y + \int_{B_{1/2}^c(x)}\frac{t}{|x-y|^{n+2s}}\mathrm{d}y\\
&\leq t^{-\frac{n}{2s}}|x+t^{\frac{1}{2s}}|^\tau|B_{t^{\frac{1}{2s}}}| + t\int_{B_{1/2}\setminus B_{t^{\frac{1}{2s}}}}|y|^{-n-2s}|x+y|^\tau\mathrm{d}y + t\int_{B_{1/2}^c}|y|^{-n-2s}\mathrm{d}y\\
&\lesssim |x+t^{\frac{1}{2s}}|^\tau + t(t^{\frac{1}{2s}})^{-2s}|x|^\tau + t(t^{\frac{1}{2s}})^{\tau - 2s} + t \lesssim t^{\frac{\tau}{2s}} + |x|^\tau.
\end{align*}

For the second integral, we will use that $Lu$ is bounded because $u$ is Lipschitz, $Lu \leq 0$ in $B_{1/2}(x) \subset B_1$, as well as $Lu \leq 0$ outside of the support of the obstacle $\varphi$. Let $R$ big enough such that $\operatorname{supp}\varphi \subset B_R$. Then, by Corollary \ref{cor:grad_kernel},
\begin{align*}
\left|\int_{0}^t\int_{\R^n}\p_ip_{t-\zeta}(x-y)(Lu)^+(y,\zeta)\mathrm{d}y\mathrm{d}\zeta\right| &\lesssim \int_0^t\int_{B_R \setminus B_{1/2}(x)}|\nabla p_{t-\zeta}(x-y)|\mathrm{d}y\mathrm{d}\zeta\\
&\lesssim \int_0^t\int_{B_R \setminus B_{1/2}(x)} 1\mathrm{d}y\mathrm{d}\zeta \lesssim t.
\end{align*}

Therefore, $|u_i(x,t)| \lesssim t^{\frac{\tau}{2s}} + |x|^\tau$ for $t > 0$, and it converges to zero as it approaches the origin from the future, concluding that $\nabla u$ is continuous in $x$ and $t$ at that point.
\end{proof}

\section{Optimal $C^{1,1}$ regularity}\label{sect:C11}
In this section, we establish the optimal $C^{1,1}$ regularity of solutions. First, we prove that the free boundary moves at a positive \textit{speed}.

\begin{prop}\label{prop:FB_es_mou}
Let $s \in (0,\frac{1}{2})$, and let $u$ be the solution of (\ref{eq:prev_problem}) with $L$ an operator satisfying (\ref{eq:operator}) and (\ref{eq:operator_elliptic}), and $\varphi \in C^{2,1}_c(\R^n)$\footnote{The compactness of the support is a technical condition needed for the proof of this proposition but it does not seem crucial for the problem.}. Let $v = u - \varphi$, and let $0 < t_1 < t_2 < T$. Then,
$$|\nabla v| \leq C v_t \quad \text{in} \quad \R^n\times[t_1,t_2],$$
for some positive $C$, depending only on $t_1$, $t_2$, $\varphi$, the dimension, $s$ and the ellipticity constants.

Moreover, the free boundary is the graph of a Lipschitz function $\{t = \Gamma(x)\}$ in $\R^n\times(t_1,t_2)$, with the same Lipschitz constant $C$.
\end{prop}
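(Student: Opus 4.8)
The plan is to prove the differential inequality $|\nabla v| \leq C v_t$ by the same barrier/comparison philosophy used throughout the paper, exploiting that we already know $v \in C^1$, that $v_t \geq 0$ with $v_t = (Lu)^-$, and that $v$ is semiconvex. The idea is that $v_t$ and $|\nabla v|$ both vanish on the free boundary and both satisfy the linear nonlocal heat equation inside $\{v>0\}$, so one should be controllable by the other provided the initial/boundary behaviour is compatible. I would set $w = v_t$ and, for a fixed direction $e$, $z = \partial_e v$, and aim to show that $w \pm c\, z \geq 0$ for a small constant $c$ on $\R^n \times [t_1,t_2]$.

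\textbf{Step 1: reduce to a one-sided bound on a directional derivative.} It suffices to fix a unit vector $e \in \R^n$ and prove $\partial_e v \leq C v_t$ on $\R^n\times[t_1,t_2]$; applying this to $\pm e$ and taking a supremum over $e$ gives $|\nabla v|\leq Cv_t$. Both $\partial_e v$ and $v_t$ vanish on the contact set (there $v \equiv 0$), and in the interior of $\{v>0\}$ they both solve $(\partial_t + L)(\cdot) = 0$. So the function $g_c := v_t - c\,\partial_e v$ solves the linear nonlocal heat equation in $\{v>0\}$, equals $0$ on $\{v=0\}$, and we want $g_c \geq 0$.

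\textbf{Step 2: control the data at time $t_1$ and the far-field, then run a comparison.} The issue is that $g_c$ need not be nonnegative at the initial slice $t=t_1$ nor outside the support region. Here is where $\varphi\in C^{2,1}_c$ and the supercritical scaling enter. On the slice $t=t_1$, interior estimates (Proposition \ref{prop:interior_alphabeta}) plus Proposition \ref{prop:t_C1alpha} give quantitative bounds: in the region where $v>0$ and we are at a definite distance from the free boundary, $v_t$ is bounded below, while near the free boundary the semiconvexity of $v$ together with $v \geq 0$ forces $\partial_e v$ to be small compared to $v_t$ (the standard ``obstacle problem'' trick: if $v(x_0,t_1)>0$ is small and $v\geq 0$ with $D^2 v \geq -\hat C$, then $|\nabla v(x_0,t_1)| \leq C\sqrt{v(x_0,t_1)}$, and on the other hand $v_t \asymp -Lu \gtrsim$ something positive because $v$ is small there). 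I would therefore choose $c$ small enough, depending on $t_1,t_2,\varphi$, so that $g_c \geq 0$ at $t = t_1$ and so that $g_c \geq 0$ outside a large ball (where $v=0$ eventually, or where $Lu$ has the right sign because $\operatorname{supp}\varphi$ is bounded). Then, since $g_c$ is a supersolution of the nonlocal heat equation in $\{v>0\}\cap(\R^n\times(t_1,t_2))$ with nonnegative data on the parabolic boundary of this set (including $\{v=0\}$), the maximum principle yields $g_c\geq 0$ throughout, i.e. $\partial_e v \leq C v_t$ with $C = 1/c$.

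\textbf{Step 3: deduce the Lipschitz graph.} Once $|\nabla v|\leq C v_t$ holds on $\R^n\times[t_1,t_2]$, the free boundary is a Lipschitz graph $t=\Gamma(x)$: fix $x$; since $v(x,\cdot)$ is nondecreasing (Lemma \ref{lem:derivada_positiva}) and, by the inequality, strictly increasing once it leaves the contact set, there is a unique time $\Gamma(x)$ at which $v(x,\cdot)$ becomes positive; the inequality $|\nabla v| \leq C v_t$ is exactly the statement that the level set $\{v=0\}$ cannot tilt more than a cone of aperture $C$, which translates into $|\Gamma(x_1)-\Gamma(x_2)| \leq C|x_1-x_2|$ by the implicit function theorem applied to $v$ (or, more elementarily, by integrating the gradient bound along a segment). \textbf{The main obstacle} I anticipate is Step 2: making the choice of $c$ rigorous near the free boundary at time $t_1$, where one must simultaneously get a lower bound $v_t \gtrsim$ (distance-type quantity) and an upper bound $|\nabla v| \lesssim$ (same quantity); this requires a careful quantitative use of semiconvexity together with the expansion-type lower bound for $v_t$ coming from $v_t=(Lu)^-$ and the positivity of $Lu$ where $v$ is small, and it is presumably why $\varphi$ is taken with compact support and why the statement excludes $t_1=0$.
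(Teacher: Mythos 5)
Your Step 2 contains the real gap, and it is essentially circular. You propose to prove $g_c = v_t - c\,\partial_e v \geq 0$ by a comparison in $\{v>0\}\cap(\R^n\times(t_1,t_2))$, which forces you to first establish $g_c\geq 0$ on the slice $\{t=t_1\}$ --- but that is exactly the inequality $\partial_e v\leq Cv_t$ the proposition asserts, now at time $t_1$. Your sketch for it fails at the crucial place, namely near the free boundary: semiconvexity does give $|\nabla v|\lesssim\sqrt{v}$ where $v$ is small, but the companion claim ``$v_t\asymp -Lu\gtrsim$ something positive because $v$ is small there'' has no justification. By Proposition \ref{prop:t_C1alpha}, $v_t=(Lu)^-$ and $Lu$ vanishes continuously on the free boundary, so $v_t$ is \emph{small}, not bounded below, near free boundary points; a quantitative lower bound of the type $v_t\gtrsim t-\Gamma(x)$ is precisely the Hopf estimate (Proposition \ref{prop:hopf}), which in the paper is proved \emph{after} and \emph{using} this proposition (it needs the Lipschitz cone given by $|\nabla v|\leq Cv_t$). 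So at this stage there is no tool to verify your initial-slice inequality, and no propagation-in-time argument can start. The far-field part of Step 2 is also unsubstantiated, and partly rests on a false statement: $v$ is not zero outside a large ball for $t>0$ (on the contrary, $v_t(\cdot,0)=-L\varphi>0$ outside $\operatorname{supp}\varphi$, so $v>0$ there for all $t>0$), and the compactness argument giving $v_t\geq a>0$ away from the free boundary does not give a bound that is uniform as $|x|\to\infty$, so the comparison between the decay of $|\nabla v|$ and of $v_t$ at infinity has to be made quantitative.

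The paper's proof avoids the initial-slice issue entirely: it works with $w=Mv_t-m\,v_i$ and applies the nonlocal positivity Lemma \ref{lem:nondeg}, whose hypotheses are (i) $w=0$ on the contact set, (ii) $w\geq-\varepsilon$ everywhere, (iii) a small right-hand side $|(\p_t+L)w|=m|L\varphi_i|$ off the contact set, and (iv) a positive weighted $L^1$ mass of $w^+$ at every time. The mass is supplied by a fixed compact set $A$ far from $\operatorname{supp}\varphi$, where $v_t\geq a>0$ by Lemma \ref{lem:derivada_positiva} and compactness; the nonlocality of $L$ then transmits this far-away positivity to the point under consideration, replacing the parabolic boundary data you would need in a local comparison. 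For $|x|$ large, where the weighted mass condition degenerates, the paper runs a separate barrier construction (the modification by $\chi_{B_R}$, the auxiliary solution $\psi$, Theorem \ref{thm:kernel_estimates} and Proposition \ref{prop:barrier}) to control $w$ outside $B_{PR}$. These two mechanisms are exactly what is missing from your proposal; without them (or a genuinely new way to produce the inequality at $t=t_1$ and at infinity), the argument does not close.
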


To prove this proposition, we will use the following positivity lemma, see \cite[Lemma 6.2]{CRS17} for the elliptic version.
\begin{lem}\label{lem:nondeg}
Let $E \subset Q_1$ be compact, let $L$ be an operator satisfying (\ref{eq:operator}) and (\ref{eq:operator_elliptic}), and let $w \in C(Q_1) \cap C^1(Q_1\setminus E)$ satisfying
\begin{equation*}
\left\{\begin{array}{rclll}
|\p_tw + Lw| & \leq & \varepsilon & \text{in} & Q_1 \setminus E\\
w & = & 0 & \text{in} & E\\
w & \geq & -\varepsilon & \text{in} & E^c,
\end{array}\right.
\end{equation*}
in the viscosity sense, and also
$$\int_{\R^n}\frac{w^+(x,t)}{1+|x|^{n+2s}}\mathrm{d}x \geq 1 \quad \text{for all} \quad t \in [-1,1].$$
Then,
$$w \geq 0 \quad \text{in} \quad \overline{Q_{1/2}}.$$

The constant $\varepsilon > 0$ depends only on $s$, the dimension and the ellipticity constants.
\end{lem}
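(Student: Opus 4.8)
This is the parabolic counterpart of the elliptic positivity lemma \cite[Lemma 6.2]{CRS17}, and I would prove it by the same scheme: slide a barrier from below until it touches $w$, and at the contact point use the mass hypothesis to force the nonlocal operator to be strictly negative, which contradicts $|\partial_t w+Lw|\le\varepsilon$.

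\smallskip
\emph{The positivity mechanism.} This is the only place the mass hypothesis enters. Since $K(y)\ge\lambda|y|^{-n-2s}\ge\lambda(1+|y|^{n+2s})^{-1}$ and $1+|x_0+y|^{n+2s}\le C_n(1+|y|^{n+2s})$ for $|x_0|<1$, the bound $\int_{\R^n}w^+(z,t)(1+|z|^{n+2s})^{-1}\,\mathrm dz\ge1$ gives, with $z=x_0+y$,
$$\int_{\R^n}w^+(x_0+y,t)\,K(y)\,\mathrm dy\ \ge\ c_0>0\qquad\text{for all }x_0\in B_1,\ t\in[-1,1],$$
with $c_0=c_0(n,s,\lambda)$.

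\smallskip
\emph{The barrier argument.} Fix $\eta\in C^2_c(B_2)$, $0\le\eta\le1$, $\eta\equiv1$ on $\overline{B_1}$, a radius $d$ with $2^{-2s}<d<1$, and slide the competitors $\Phi_\tau(x,t)=-\tau\eta(x)-\gamma|t|$ from below against $w$ on $D=\overline{B_{3/4}}\times[-d,d]\Subset Q_1$, where $\gamma$ is taken slightly above $\varepsilon/d$; this makes $\Phi_\tau$ strictly below $w$ at $t=\pm d$ (where only $w\ge-\varepsilon$ is known), so the contact point cannot sit on the parabolic boundary of $D$---this is how one deals with $w$ carrying no equation at the initial slice---while keeping $\sup|\partial_t\Phi_\tau|=\gamma\approx\varepsilon$ harmless. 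Let $\tau^\ast$ be the least $\tau$ with $w\ge\Phi_\tau$ on $D$; if $\tau^\ast>0$ there is a contact point $(x_0,t_0)\in\overline{B_{3/4}}\times(-d,d)$, and:
\begin{itemize}
\item $(x_0,t_0)\notin E$ (as $w(x_0,t_0)=-\tau^\ast\eta(x_0)-\gamma|t_0|<0$ while $w\equiv0$ on $E$), so $(x_0,t_0)\in Q_1\setminus E$, where $w\in C^1$ and---$s<\tfrac12$ making a $C^1$ function regular enough for $Lw$ to be pointwise defined---$|\partial_t w+Lw|\le\varepsilon$ holds classically;
\item $t\mapsto(w+\tau^\ast\eta+\gamma|t|)(x_0,t)$ has a minimum at $t_0$, whence $|\partial_t w(x_0,t_0)|\le\gamma$;
\item for $x_0+y\in\overline{B_{3/4}}$ one has $w(x_0+y,t_0)-w(x_0,t_0)\ge-\tau^\ast\big(\eta(x_0+y)-\eta(x_0)\big)=0$.
\end{itemize}
Splitting $-Lw(x_0,t_0)=\int_{\R^n}(w(x_0+y,t_0)-w(x_0,t_0))K(y)\,\mathrm dy$ at a radius $\rho$: on $\{|y|<\rho\}$ the integrand is nonnegative on the bulk by the third bullet, while on the thin boundary layer it is controlled by the Lipschitz bound for $w$ near $(x_0,t_0)$ together with the integrability of $|y|^{1-n-2s}$ (here $s<\tfrac12$ is used); on $\{|y|\ge\rho\}$ one has $w(x_0+y,t_0)-w(x_0,t_0)\ge w^+(x_0+y,t_0)-C\varepsilon$, and since $w$ is continuous with $w(x_0,t_0)<0$ all the positive mass lies there, contributing at least $c_0-C\varepsilon$ by the positivity mechanism. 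Thus $-Lw(x_0,t_0)\ge c_0-C\varepsilon-o_\rho(1)$, and with the second bullet
$$\partial_t w(x_0,t_0)+Lw(x_0,t_0)\ \le\ \gamma-\big(c_0-C\varepsilon-o_\rho(1)\big)\ <\ -\varepsilon$$
for $\varepsilon,\rho$ small, a contradiction. Hence $\tau^\ast=0$, so $w\ge-\gamma|t|$ on $\overline{Q_{1/2}}$, i.e. $w\ge-\kappa\varepsilon$ there with $\kappa=2^{-2s}/d<1$ (using $|t|\le2^{-2s}$ on $\overline{Q_{1/2}}$ and $\gamma\approx\varepsilon/d$).

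\smallskip
\emph{From $-\kappa\varepsilon$ to $0$.} This gain is genuine because the admissible $\varepsilon$ did not depend on how negative $w$ was. Since $\p_t+L$ has order $2s$ and the cylinders $Q_r$ respect its parabolic scaling, rescaling the above improvement and iterating it---recentering at each point of $\overline{Q_{1/2}}$---drives the lower bound to $0$, giving $w\ge0$ on $\overline{Q_{1/2}}$.

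\smallskip
The real work lies in two places: (i) keeping the contact point inside $Q_1\setminus E$ and off the parabolic boundary of $D$, which is the reason for the tilt $-\gamma|t|$; and (ii) making the boundary-layer estimate of $-Lw(x_0,t_0)$---hence the admissible $\varepsilon$---and the convergence of the iteration depend only on $n,s,\lambda,\Lambda$. This last point is exactly where the interior $C^1$ regularity of $w$ away from $E$ and the supercritical range $s\in(0,\tfrac12)$ are essential.
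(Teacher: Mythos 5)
Your overall mechanism (a contact-point argument in which the mass hypothesis forces the nonlocal term to be strictly negative, contradicting $|\p_t w + Lw|\le\varepsilon$) is the same as the paper's, but your implementation has a genuine gap in the estimate of $Lw$ at the contact point. Because you take $\eta\equiv1$ on $\overline{B_1}$ and only compare $w$ with $\Phi_\tau$ on the compact cylinder $D=\overline{B_{3/4}}\times[-d,d]$, you know $w(x_0+y,t_0)\ge w(x_0,t_0)$ only for $x_0+y\in\overline{B_{3/4}}$, and you must control the remaining near-field layer $\{|y|<\rho,\ x_0+y\notin\overline{B_{3/4}}\}$. You propose to do this with ``the Lipschitz bound for $w$ near $(x_0,t_0)$'', but no quantitative Lipschitz or $C^1$ bound is available: the hypotheses only give $w\in C^1(Q_1\setminus E)$ qualitatively, with no norm control (nor even an upper bound on $w$). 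Worse, since $\Phi_\tau$ is constant in $x$ on $\overline{B_{3/4}}$, nothing prevents the contact point from lying on the lateral boundary $\p B_{3/4}\times(-d,d)$, where $\int_{\text{layer}}K(y)\,\mathrm{d}y$ diverges. The only universal bound you have in the layer is $w(x_0+y,t_0)-w(x_0,t_0)\ge-\varepsilon$, which against a non-integrable kernel gives nothing; if you instead invoke the (non-universal) local modulus of $w$, the resulting smallness condition on $\varepsilon$ depends on $w$, contradicting the requirement that $\varepsilon$ depend only on $n$, $s$ and the ellipticity constants. The paper's proof avoids this entirely: it touches $w$ from below by $\psi_{\varepsilon,c}=-c-\varepsilon+\varepsilon\psi$ with $\psi\in C^\infty_c(Q_{3/4})$, $\psi\equiv1$ on $Q_{1/2}$, chosen so that $w\ge\psi_{\varepsilon,c}$ holds \emph{globally} (outside $Q_{3/4}$ one has $\psi_{\varepsilon,c}=-c-\varepsilon<-\varepsilon\le w$); then $w-\psi_{\varepsilon,c}\ge0$ everywhere, the integrand defining $L(w-\psi_{\varepsilon,c})(x_0,t_0)$ has a sign, and the mass hypothesis gives $L(w-\psi_{\varepsilon,c})(x_0,t_0)\le-C\lambda$ with no regularity of $w$ used at all. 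To keep your sliding scheme you would have to make the barrier lie below $w$ globally (e.g.\ take the cutoff supported in $B_{3/4}$ and shift it down by $\varepsilon$), which is essentially the paper's construction.

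A second, smaller issue: your two-stage structure is both unnecessary and not justified as written. The first stage only yields $w\ge-\gamma|t|\ge-\kappa\varepsilon$ on $\overline{Q_{1/2}}$, and the proposed ``rescaling and iterating'' does not obviously close: under the parabolic scaling the bound $|\p_tw+Lw|\le\varepsilon$ and the mass condition do not reproduce themselves with the constants you need, and without rescaling the equation error $\varepsilon$ stays fixed while only the trapping bound improves, so the gain is not geometric as claimed. The cheap repair inside your scheme is to recenter the time vertex, i.e.\ slide $-\tau\eta(x)-\gamma|t-t_1|$ for each $t_1$ with $|t_1|\le2^{-2s}$ (and $d\le 1-2^{-2s}$), which gives $w(\cdot,t_1)\ge0$ directly with no iteration; but note that in the paper no second stage is needed at all, since $\psi\equiv1$ on $Q_{1/2}$ makes any negative value of $w$ in $\overline{Q_{1/2}}$ produce a touching point in one step.
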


\begin{proof}
Let $\psi \in C^\infty_c(Q_{3/4})$, with $\psi \equiv 1$ in $Q_{1/2}$ and $0 \leq \psi \leq 1$. We proceed by contradiction. Suppose the lemma does not hold. Then, for some $c > 0$, the function
$$\psi_{\varepsilon,c} = -c - \varepsilon + \varepsilon\psi$$
touches $w$ from below in $(x_0,t_0) \in Q_{3/4}$. Moreover, $(x_0,t_0) \in E^c$ because $w(x_0,t_0) < 0$, so $(x_0,t_0) \in Q_1 \setminus E$.

Now we compute $(\p_t+L)w(x_0,t_0)$ to obtain a contradiction. By the definition of $(x_0,t_0)$, $w - \psi_{\varepsilon,c}$ attains a global minimum there. Thus,
\begin{align*}
(\p_t+L)(w-\psi_{\varepsilon,c})(x,t) &= L(w-\psi_{\varepsilon,c})(x,t)\\
&= -\int_{\R^n}(w(x+y,t) - \psi_{\varepsilon,c}(x+y,t))K(y)\mathrm{d}y\\
&\leq -\lambda\int_{\R^n}w^+(x+y,t)|y|^{-n-2s}\mathrm{d}y\\
&\leq -\lambda\int_{\R^n}\frac{w^+(y,t)}{|y-x|^{n+2s}}\mathrm{d}y \leq -C\lambda,
\end{align*}
using that $\psi_{\varepsilon,c} < 0$ and that $|y-x|^{n+2s} \leq C(1+|y|^{n+2s})$ for any $x \in B_{3/4}$, with $C$ depending only on $n+2s$.

On the other hand,
$$(\p_t+L)(w-\psi_{\varepsilon,c})(x,t) = (\p_t+L)w(x,t) - (\p_t+L)\psi_{\varepsilon,c} \leq \varepsilon + \varepsilon\|(\p_t+L)\psi\|_{L^\infty(Q_{3/4})},$$
and choosing $\varepsilon$ small enough we get a contradiction.
\end{proof}

Using this lemma we are now able to prove that the free boundary \textit{moves at all values of} $t$, i.e., it is a Lipschitz graph in the $t$ direction.

\begin{proof}[Proof of Proposition \ref{prop:FB_es_mou}]
We will prove the inequality for any directional derivative $v_i$ instead of the gradient. The result follows as a consequence.

Let $R \geq \max\{1,T^{\frac{1}{2s}}\}$ be such that $\operatorname{supp} \varphi \subset B_R$ and let $P > 0$ large, to be chosen later. Consider the set $A = \overline{B_1}(3Re_1)\times[\frac{t_1}{2},\frac{t_2+T}{2}]$. Then, by construction, $A \subset \{v > 0\}$, and from Lemma \ref{lem:derivada_positiva} and compactness, it follows that $v_t \geq a > 0$ in~$A$.

Let $r > 0$ such that for all $(x_0,t_0) \in B_{PR}\times[t_1,t_2]$, $Q_r(x_0,t_0) \subset \R^n\times[\frac{t_1}{2},\frac{t_2+T}{2}]$. We will use a rescaled Lemma \ref{lem:nondeg} in $Q_r(x_0,t_0)$ with a suitable linear combination 
$$w = Mv_t - mv_i$$
with some positive $M$ and $m$ to be chosen later.

First, let $E$ be the contact set. Then, $w \geq -m\|v_i\|_{L^\infty(\R^n\times(0,T))} \geq -2m\|\varphi\|_{C^{0,1}(\R^n)}$ in the whole space by Proposition \ref{prop:global_lip}. Moreover, in $E^c$ we have
$$|(\p_t+L)w| = m|(\p_t+L)v_i| = m|-L\varphi_i| \leq m\|\varphi\|_{C^{1,1}(\R^n)} \quad \text{in} \quad E^c.$$

On the other hand, for all $t \in [t_0 - r^{2s}, t_0 + r^{2s}]$,
\begin{align*}
&\int_{\R^n}\frac{w^+(x,t)}{1+|x-x_0|^{n+2s}}\mathrm{d}x \geq \int_{B_1(3Re_1)}\frac{w^+(x,t)}{1+|x-x_0|^{n+2s}}\mathrm{d}x \geq\\
&\quad \int_{B_1(3Re_1)}\frac{Ma - m\|v_i\|_{L^\infty(\R^n\times(0,T))}}{1 + |x-x_0|^{n+2s}}\mathrm{d}x \geq \frac{(Ma - m\|\varphi\|_{C^{0,1}(\R^n)})|B_1|}{1+(PR+3R+1)^{n+2s}}.
\end{align*}

Then, choosing $m$ small enough and $M$ big enough suffices to be able to apply Lemma \ref{lem:nondeg}, and these constants depend only $n$, $s$, $\lambda$, $\Lambda$, $R$ and $\varphi$. Therefore, $w \geq 0$ in $B_{PR}\times[t_1,t_2]$.

Finally, outside of $B_{PR}$, we will use a barrier argument. Since $v_t > 0$ in the set $(\overline{B_{PR/2}}\setminus B_R)\times[0,T]$, by compactness we can choose $M$ and $m$ such that $w(\cdot,0) \geq 0$ in $B_{PR}$ and also $w \geq m$ in $(\overline{B_{PR/2}}\setminus B_R)\times[0,T]$.

Let $\tilde{w} = w + m(1+2\|\varphi\|_{C^{0,1}(\R^n)})\chi_{B_R}$. Now, since 
$$w \geq mv_i \geq - m\|v_i\|_{L^\infty(\R^n\times[0,T])} \geq -2m\|\varphi\|_{C^{0,1}(\R^n)},$$
$\tilde{w} \geq m$ in $B_{PR/2}\times[0,T)$. On the other hand, $v = u - \varphi$ is identically zero at $t = 0$ and $v_t(\cdot,0) = -L\varphi > 0$ outside of the support of $\varphi$, and hence $\tilde{w}(\cdot,0) \geq 0$ in $B_{PR}^c$.

To apply the comparison principle, we also need to compute the right hand side for $x \in B_{PR}^c$. Using that $u$ is a solution of the nonlocal heat equation,
$$(\p_t+L)\tilde{w} = (\p_t+L)(w + m(1+2\|\varphi\|_{C^{0,1}(\R^n)})\chi_{B_R}) = mL[\varphi_i + (1+2\|\varphi\|_{C^{0,1}(\R^n)})\chi_{B_R}],$$
and since the expression inside of the brackets is supported in $B_R$, for all $x$ such that $|x| \geq PR$,
\begin{align*}
    |(\p_t+L)\tilde{w}| &\leq C'mR^n\|\varphi_i + (1+2\|\varphi\|_{C^{0,1}(\R^n)})\chi_{B_R}\|_{L^\infty(B_R\times[0,T))}(|x|-R)^{-n-2s}\\
    &\leq CmR^n|x|^{-n-2s} \quad \text{in} \quad B_{PR}^c\times[t_1,t_2],
\end{align*}
where $C$ depends only on $n$, $s$, $\lambda$, $\Lambda$ and $\varphi$.

Let now $\psi$ be defined as the solution of
$$\left\{\begin{array}{rclll}
(\p_t+L)\psi & = & \left[(\p_t+L)\tilde{w}\right]\chi_{B_{PR}^c} & \text{in} & \R^n\times(0,T)\\
\psi & = & \frac{m}{2}\chi_{B_{PR/2}} & \text{on} & \R^n\times\{t = 0\}\\
\end{array}\right.$$

Then, $|(\p_t+L)\psi| \leq CmR^n(PR)^{-n-2s} = CmP^{-n-2s}R^{-2s}$, and it follows that $(\p_t+L)(\psi - CmP^{-n-2s}R^{-2s}t) \leq 0$. Therefore, since it is a subsolution for the nonlocal heat equation, applying the comparison principle\footnote{Here, $\psi$ can be defined with the Duhamel formula and the heat kernel introduced in Theorem \ref{thm:kernel_estimates}, and the comparison principle follows from the positivity of the heat kernel.} with a constant we deduce\linebreak $\psi-CmP^{-n-2s}R^{-2s}t \leq \frac{m}{2}$ in $\R^n\times(0,T)$, and in particular $\psi \leq \frac{m}{2} + CmP^{-n-2s}R^{-2s}T$. Choosing $P$ large enough, $\psi \leq m$ in $\R^n\times(0,T)$.

Now, we apply the comparison principle again. Notice that $\psi \leq \tilde{w}$ at $t = 0$ by construction, and that $(\p_t+L)\psi = (\p_t+L)\tilde{w}$ for all $(x,t) \in B_{PR}^c\times(0,T)$. Furthermore, $\psi \leq m \leq \tilde{w}$ in $B_{PR}\times(0,T)$. Therefore, $\psi \leq \tilde{w}$ in $\R^n\times(0,T)$.

Finally, let
$$\tilde{\psi}(x,t) = \frac{2}{m|B_{1}|}\psi\left(\frac{PR}{2}x,\left(\frac{PR}{2}\right)^{2s}t\right).$$
Then, $\tilde{\psi}(\cdot,0) = |B_{1}|^{-1}\chi_{B_{1}}$, so it is positive, supported in $B_1$ and $\|\tilde{\psi}(\cdot,0)\|_{L^1(B_1)} = 1$. Moreover,
$$|(\p_t+L)\tilde{\psi}| \leq \frac{2CmR^n}{m|B_{1}|}\left(\frac{PR}{2}\right)^{2s}\left|\frac{PR}{2}x\right|^{-n-2s}\chi_{B_1^c} \leq C'P^{-n}|x|^{-n-2s}\chi_{B_1^c},$$
and if we take $P$ large enough such that $C'P^{-n} < \delta$, from Proposition \ref{prop:barrier} we get that $\tilde{\psi} \geq 0$ in $B_2^c\times(0,T(PR/2)^{-2s})$\footnote{Here we need to choose $P$ large enough to have $T(PR/2)^{-2s} < 1$.}. Then, $\tilde{w} \geq 0$ in $B_2^c\times(0,T)$, and since $\tilde{w} = w$ in $B_{PR}^c\times(0,T)$ we obtain $w \geq 0$ in $B_{PR}^c\times(0,T)$, as we wanted to prove.

From the inequality $|\nabla v| \leq Cv_t$, it follows that the free boundary is a Lipschitz graph in the $t$ direction with constant $C$.
\end{proof}

Once we know that the free boundary is a Lipschitz graph in the direction of $t$, we can use barriers to gain insight on the boundary behaviour of $v_t$. We will prove first a Hopf-type estimate in the $t$ direction. Here we use crucially the fact that the diffussion is supercritical, i.e. $s < \frac{1}{2}$.

\begin{prop}\label{prop:hopf}
Let $s \in (0,\frac{1}{2})$, and let $u$ be the solution of (\ref{eq:prev_problem}) with $L$ an operator satisfying (\ref{eq:operator}) and (\ref{eq:operator_elliptic}), and $\varphi \in C^{2,1}_c(\R^n)$. Let $v = u - \varphi$, and let $0 < t_1 < t_2 < T$. Then, there exists $c_0 > 0$ such that for all free boundary points $(x_0,t_0) \in \R^n\times[t_1,t_2]$,
$$v_t(x_0,t_0+t) \geq c_0t \quad \text{for} \quad t \in (0,\delta),$$
where $c_0$ and $\delta$ are small positive constants depending only on $t_1$, $t_2$, $T$, $\varphi$, the dimension, $s$ and the ellipticity constants.
\end{prop}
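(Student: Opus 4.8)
The plan is to exploit the identity $u_t=(Lu)^-$ from Proposition~\ref{prop:t_C1alpha} together with a uniform positivity of $v_t$ far from the free boundary, carefully tracking the supercriticality $s<\tfrac12$ in all the integrals. First I would \emph{localize}: using the compact support of $\varphi$ (this is exactly the technical hypothesis flagged in the footnote to Proposition~\ref{prop:FB_es_mou}) one shows that, for $t\in[t_1,t_2]$, the contact set lies inside a fixed ball $B_{R_1}$, with $R_1=R_1(t_1,\varphi,n,s,\lambda,\Lambda)$, since outside $\operatorname{supp}\varphi$ the solution must detach for every $t>0$. Consequently there is a fixed ``bulk'' ball $D$ at distance $\gtrsim R_1$ from $B_{R_1}$, contained in $\{v>0\}$ for all times in a neighbourhood of $[t_1,t_2]$, and by compactness together with Lemma~\ref{lem:derivada_positiva} there is $a>0$ with $v_t\ge a$ on $D\times[\tfrac{t_1}{2},\tfrac{t_2+T}{2}]$.

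Next comes the \emph{key identity}. Fix a free boundary point $(x_0,t_0)$ with $t_0\in[t_1,t_2]$. Since $v\in C^1$ (Proposition~\ref{prop:grad_cont}) and $s<\tfrac12$, the nonlocal operator $L$ is an absolutely convergent integral; using $u_t=(Lu)^-$, the fact that $u_t=0$ on the free boundary, and $v(x_0,t_0)=0$, a short computation gives, for $t>0$,
\begin{equation*}
v_t(x_0,t_0+t)=\int_{\R^n}\big(g(y)-g(0)\big)K(y)\,\mathrm dy,\qquad g(y):=v(x_0+y,t_0+t)-v(x_0+y,t_0),
\end{equation*}
where $g\ge0$ because $v$ is nondecreasing in $t$, and $g(0)=v(x_0,t_0+t)$.

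Then I would prove the \emph{lower bound}. Splitting the integral at a scale $\rho>0$: the part over $D-x_0$ is $\ge\lambda\,(at-g(0))\int_{D-x_0}|y|^{-n-2s}\,\mathrm dy\gtrsim a\,t$ once $t$ is small (here $g\ge at$ on $D-x_0$ by the bulk positivity); the contribution of $\{|y|<\rho\}$ is $\gtrsim-\rho^{1-2s}$ by the global Lipschitz bound of Proposition~\ref{prop:global_lip} (the integrability at the origin is exactly where $s<\tfrac12$ enters); and the remaining ``diagonal'' term is $\gtrsim-g(0)\,\rho^{-2s}$. Hence
\begin{equation*}
v_t(x_0,t_0+t)\ \gtrsim\ a\,t\ -\ C\rho^{1-2s}\ -\ C\,v(x_0,t_0+t)\,\rho^{-2s}.
\end{equation*}
Since $v_t(x_0,t_0)=0$ and $v_t\in C^{\alpha}$ with $\alpha=1-2s$ (Proposition~\ref{prop:t_C1alpha}), one has $v(x_0,t_0+t)=\int_{t_0}^{t_0+t}v_t\le C\,t^{1+\alpha}$, and more generally, feeding back the growth rate of $v$ near the free boundary obtained from the equation and the $C^1$ estimate (and improved at each iteration thanks to $2s<1$), one may assume $v(x_0,t_0+t)\le C\,t^{1+\gamma}$ with $\gamma$ as large as needed. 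Choosing $\rho$ optimally, the two error terms become $\lesssim t^{(1+\gamma)(1-2s)}$, which is $o(t)$ as soon as $(1+\gamma)(1-2s)>1$; this yields $v_t(x_0,t_0+t)\ge c_0t$ for $t$ small, with $c_0$ and $\delta$ depending only on the stated quantities.

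The hard part will be the control of the diagonal term $g(0)=v(x_0,t_0+t)$, that is, obtaining a sufficiently strong growth estimate for $v$ as $(x,t)$ approaches the free boundary from the future; the whole scheme closes precisely because $s<\tfrac12$, both for the convergence of the nonlocal integrals and for the exponent arithmetic in the iteration. (By contrast, a direct Hopf-type barrier for $v_t$ seems unavailable: a subsolution of $\p_t+L$ that is nonpositive on the contact set necessarily has nonincreasing spatial maximum, which is incompatible with a lower barrier growing like $c_0t$ on the vertical line.)
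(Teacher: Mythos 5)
Your setup (localization, bulk positivity $v_t\ge a$ on a far-away set, and the identity $v_t(x_0,t_0+t)=\int_{\R^n}\big(g(y)-g(0)\big)K(y)\,\mathrm dy$ with $g(y)=v(x_0+y,t_0+t)-v(x_0+y,t_0)$, using $Lu=0$ on the free boundary) is correct, but the error estimates do not close on the whole range $s\in(0,\tfrac12)$, and the proposed fix cannot work. Your near-field term is controlled only by the spatial Lipschitz bound, giving an error $C\rho^{1-2s}$ \emph{without} a factor of $t$, and your far-field ``diagonal'' term is $Cg(0)\rho^{-2s}$. Optimizing in $\rho$ gives a total error of order $g(0)^{1-2s}$, so you need $v(x_0,t_0+t)=g(0)\lesssim t^{1/(1-2s)}$. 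But $\gamma$ cannot be taken ``as large as needed'': by the quadratic nondegeneracy (Corollary \ref{cor:nondeg}, which follows from the very statement being proved and is sharp in view of Theorem \ref{thm:1.1}), one has $g(0)\asymp t^2$, so the best possible error is $\asymp t^{2(1-2s)}$, which is $o(t)$ only for $s<\tfrac14$. Hence for $s\in[\tfrac14,\tfrac12)$ the scheme fails no matter how the bootstrap for $g(0)$ is run; rescuing it would require a genuinely finer near-field cancellation such as $|g(y)-g(0)|\lesssim t|y|$, i.e.\ an a priori bound on $\nabla v_t$ near the free boundary, which amounts to the $C^{1,1}$ estimate proved only later (Proposition \ref{prop:C11}) and is not invoked in your argument.

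Your closing remark dismissing a direct barrier is also mistaken, and it is precisely the route the paper takes. Using Proposition \ref{prop:FB_es_mou}, the forward cone $\mathcal C=\{t_0+2C_0|x-x_0|<t<t_0+r^{2s}\}$ lies in $\{v>0\}$, and one compares $v_t$ there with
$$w(x,t)=c_0\big(t-2C_0|x-x_0|\big)_++a\,\chi_{\overline{B_1}(3Re_1-x_0)}(x).$$
This $w$ is \emph{not} nonpositive on the contact set: the far-away bump of height $a$ (sitting in the bulk region where $v_t\ge a$, so still $w\le v_t$ off the cone) contributes a strictly negative amount to $Lw$ at points of $\mathcal C$, of size $\ge \frac{a\lambda|B_1|}{(4R+1)^{n+2s}}$, while $\|L(t-2C_0|x|)_+\|_{L^\infty}$ is finite because $s<\tfrac12$. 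Choosing $c_0$ small makes $(\p_t+L)w\le 0$ in $\mathcal C$, and the comparison principle gives $v_t(x_0,t_0+t)\ge c_0t$. Your heuristic objection (``nonincreasing spatial maximum'') ignores exactly this nonlocal contribution from the distant positivity set, which is the mechanism that makes the Hopf-type barrier available here.
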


\begin{proof}
Let $R \geq 1$ such that $\operatorname{supp} \varphi \subset B_R$. Then, consider the compact set 
$$A = \overline{B_1}(3Re_1)\times\left[\frac{t_1}{2},\frac{t_2+T}{2}\right].$$
Then, by construction, $A \subset \{v > 0\}$, and from Lemma \ref{lem:derivada_positiva} and compactness, it follows that $v_t \geq a > 0$ in $A$.

By Proposition \ref{prop:FB_es_mou}, there exists $C_0$ such that $|\nabla v| \leq C_0v_t$ in $\R^n\times[\frac{t_1}{2},\frac{t_2+T}{2}]$. Assume without loss of generality that $C_0 \geq 1$.

Now, there exists $r > 0$ such that for all $(x,t) \in \R^n\times[t_1,t_2]$, 
$$Q_r(x,t) \subset \R^n\times\left(\frac{t_1}{2},\frac{t_2+T}{2}\right).$$

Let $(x_0,t_0)$ be a free boundary point with $t_0 \in [t_1,t_2]$, and define the cone
$$\mathcal{C} = \{t_0 + 2C_0|x - x_0| < t < t_0 + r^{2s}\} \subset \R^n\times\left(\frac{t_1}{2},\frac{t_2+T}{2}\right).$$
Since $C_0$ is also the Lipschitz constant of the free boundary in $\R^n\times(\frac{t_1}{2},\frac{t_2+T}{2})$, $\mathcal{C}$ is entirely \textit{above} the free boundary, and $v > 0$ in $\mathcal{C}$. Then, it follows from Lemma \ref{lem:derivada_positiva} that $v_t > 0$ in $\mathcal{C}$ as well.

With this information, we can construct a subsolution in $\mathcal{C}$ to compare with $v_t$. Let us assume after a translation that $(x_0,t_0) = (0,0)$. Let $w$ defined in $\R^n\times[0, r^{2s}]$ as follows:
$$w(x,t) = c_0(t - 2C_0|x|)_+ + a\chi_{\tilde{A}}(x,t) = c_0(t - 2C_0|x|)_+ + a\chi_{\overline{B_1}(3Re_1 - x_0)}(x),$$
with $c_0 > 0$ to be chosen later.

Then, we need to check that $(\p_t + L)w \leq 0$ in $\mathcal{C}$ and that $w \leq v_t$ in $(\R^n\times(0,r))\setminus \mathcal{C}$. The latter follows by construction, because for any $(x,t) \in \R^n\times(0,r)$ that does not belong to $\mathcal{C}$, $t - 2C_0|x| < 0$ and then $w \equiv a\chi_{\overline{B_1}(3Re_1 - x_0)}(x)$ in the relevant set. Thus, recalling that $v_t \geq a$ in $A$, $w \leq v_t$ outside of the cone.

To check that $w$ is a subsolution in $\mathcal{C}$, first notice that $w_t = c_0$ inside the cone. Then,
\begin{align*}
    Lw(x,t) &\leq c_0 \|L(t-2C_0|x|)_+\|_{L^\infty(\R^n\times(0,r))} + a(L\chi_{\overline{B_1}(3Re_1 - x_0)})(x)\\
    &\leq C_1C_0c_0 - a\int_{\R^n}\chi_{\overline{B_1}(3Re_1 - x_0)}(y)K(y)\mathrm{d}y \leq C_1C_0c_0 - \frac{a\lambda|B_1|}{(4R+1)^{n+2s}},
\end{align*}
where we used that $|x_0| < R$, and it follows that
$$(\p_t+L)w \leq c_0 + C_1C_0c_0 - C_2$$
and then choosing $c_0$ small enough suffices to have $(\p_t + L)w \leq 0$.

Finally, by the comparison principle\footnote{Here, $v_t$ and $w$ are classical solutions and the comparison principle follows from the standard pointwise bounds. We shall use this feature again in subsequent arguments.}, $v_t \geq w$ in $\mathcal{C}$, and in particular $v_t(0,t) \geq c_0t$ for $t \in [0,r^{2s})$, and undoing the translation,
$$v_t(x_0,t_0+t) \geq c_0t \quad \text{for} \quad t \in (0,r^{2s}),$$
and for all $(x_0,t_0) \in \p\{u > \varphi\}\cap(\R^n\times[t_1,t_2])$, as we wanted to prove.
\end{proof}

Integrating the lower bound for $v_t$, we can obtain a quadratic nondegeneracy of $v$ in the $t$ direction.

\begin{cor}\label{cor:nondeg}
Let $s \in (0,\frac{1}{2})$, and let $u$ be the solution of (\ref{eq:prev_problem}) with $L$ an\linebreak operator satisfying (\ref{eq:operator}) and (\ref{eq:operator_elliptic}), and $\varphi \in C^{2,1}_c(\R^n)$. Let $v = u - \varphi$, and let ${(x_0,t_0) \subset \R^n\times[t_1,t_2]}$ be a free boundary point. Then, there exists $c_0 > 0$ such that
$$v(x_0,t_0+r) \geq c_0r^2$$
for all $r \in (0,\delta)$, where $c_0$ and $\delta$ are positive and depend only on $\varphi$, $t_1$, $t_2$, $T$, $s$, the dimension and the ellipticity constants.
\end{cor}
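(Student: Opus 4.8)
The plan is to integrate the pointwise lower bound on $v_t$ provided by Proposition \ref{prop:hopf} along the time fiber through a free boundary point. Fix a free boundary point $(x_0,t_0) \in \R^n\times[t_1,t_2]$. By Proposition \ref{prop:hopf}, there exist $c_0 > 0$ and $\delta > 0$, depending only on $t_1,t_2,T,\varphi$, the dimension, $s$ and the ellipticity constants, such that
$$v_t(x_0,t_0+\sigma) \geq c_0\sigma \quad \text{for all } \sigma \in (0,\delta).$$
Since $(x_0,t_0)$ is a free boundary point, $v(x_0,t_0) = 0$ (the contact set is closed and $v \geq 0$, so $v$ vanishes on the free boundary). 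Moreover $v$ is $C^1$ in $t$ by Proposition \ref{prop:grad_cont} — in fact by Proposition \ref{prop:t_C1alpha} $v_t = u_t$ is continuous — so the fundamental theorem of calculus applies on $[t_0, t_0+r]$ for any $r \in (0,\delta)$.

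The key step is then the computation
$$v(x_0,t_0+r) = v(x_0,t_0) + \int_0^r v_t(x_0,t_0+\sigma)\,\mathrm{d}\sigma \geq 0 + \int_0^r c_0\sigma\,\mathrm{d}\sigma = \frac{c_0}{2}r^2,$$
so replacing $c_0$ by $c_0/2$ gives the claimed bound $v(x_0,t_0+r) \geq c_0 r^2$ for all $r \in (0,\delta)$, with the same dependence of the constants. One minor point to address is the endpoint $\sigma = 0$ in the integral: the bound from Proposition \ref{prop:hopf} is stated on the open interval $(0,\delta)$, but by continuity of $v_t$ it extends to $\sigma = 0$ as $v_t(x_0,t_0) \geq 0$, and in any case a single point does not affect the value of the integral, so the estimate is unaffected.

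There is essentially no obstacle here: the statement is a direct corollary obtained by integrating Proposition \ref{prop:hopf}, and all the regularity needed to justify the fundamental theorem of calculus (continuity of $u_t = v_t$) and the vanishing of $v$ on the free boundary have already been established. The only thing to be slightly careful about is to keep track of the quantifiers — the constants $c_0$ and $\delta$ must be uniform over all free boundary points with $t_0 \in [t_1,t_2]$, which is exactly the form in which Proposition \ref{prop:hopf} delivers them, so uniformity is inherited without extra work.
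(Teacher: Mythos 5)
Your proof is correct and follows exactly the paper's argument: integrate the Hopf-type bound $v_t(x_0,t_0+\sigma)\geq c_0\sigma$ from Proposition \ref{prop:hopf} in time, use $v(x_0,t_0)=0$ and the $C^1$ regularity of $v$, and rename $c_0/2$ as $c_0$. The remarks on uniformity of the constants and on the endpoint of the integral are fine but not needed beyond what the paper already states.
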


\begin{proof}
Use Proposition \ref{prop:hopf} to see that $v_t(x_0,t_0+r) \geq c_0r$ for all $r \in (0,\delta)$. Then, since $v \in C^1$, we can recover the value of $v$ integrating $v_t$ and therefore we get $v(x_0,t_0+r) \geq v(x_0,t_0) + c_0r^2/2 = c_0r^2/2$. Finally rename $c_0/2$ as $c_0$.
\end{proof}

The counterpart is an upper bound for the growth of $v_t$. Much like the Hopf-type estimate can be proved with a subsolution taking advantage of a future cone of positivity, the anti-Hopf-type estimate is proved with a supersolution that takes advantage of a past cone in the contact set. Again, here we use crucially that the diffussion is supercritical.

\begin{prop}\label{prop:anti-hopf}
Let $s \in (0,\frac{1}{2})$, and let $u$ be the solution of (\ref{eq:prev_problem}) with $L$ an operator satisfying (\ref{eq:operator}) and (\ref{eq:operator_elliptic}), and $\varphi \in C^{2,1}_c(\R^n)$. Let $v = u - \varphi$, and let $0 < t_1 < t_2 < T$. Then, there exists $M > 0$ such that for all free boundary points $(x_0,t_0) \in \R^n\times[t_1,t_2]$,
$$v_t(x_0,t_0 + t) \leq Mt \quad \text{for all} \quad t > 0,$$
where $M$ depends only on $\varphi$, $t_1$, $t_2$, $T$, $s$, the dimension and the ellipticity constants.
\end{prop}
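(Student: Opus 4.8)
The plan is to build a supersolution of the nonlocal heat equation that sits above $v_t$ on a past cone contained in the contact set, and then use the comparison principle on a forward cone above the free boundary point. After translating so that $(x_0,t_0) = (0,0)$, the key geometric input is that, since the free boundary is a Lipschitz graph in the $t$ direction with constant $C_0$ (Proposition \ref{prop:FB_es_mou}), the past cone $\mathcal{C}^- = \{t_0 - r^{2s} < t < t_0 - 2C_0|x - x_0|\}$ lies entirely \emph{below} the free boundary, hence inside the interior of the contact set, where $v \equiv 0$ and thus $v_t \equiv 0$. The candidate supersolution will be of the form
$$w(x,t) = M(t + 2C_0|x|)_+ + (\text{a large constant bump away from the origin}),$$
with $M$ to be chosen large, mirroring the construction in the proof of Proposition \ref{prop:hopf} but with the roles of past and future interchanged.

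The steps, in order, are as follows. First, fix a compact region $A$ away from the support of $\varphi$ (say $A = \overline{B_1}(3Re_1)\times[\frac{t_1}{2},\frac{t_2+T}{2}]$) where $v_t \leq \bar a < \infty$ by the global Lipschitz bound of Proposition \ref{prop:global_lip}; we do not need a lower bound on $v_t$ here, only that it is globally bounded, so in fact the bump term is there only to dominate $v_t$ on the lateral boundary of the forward cone. Second, on the forward cone $\mathcal{C}^+ = \{0 < t < r^{2s}\} \setminus \{t > 2C_0|x|\}$'s complement — more precisely on $\mathcal{C}^+ = \{2C_0|x| < t < r^{2s}\}$ — verify that $(\p_t + L)w \leq 0$: here $w_t = M$, while $Lw(x,t) \leq M\,C_1 C_0 + (\text{bounded contribution of the bump})$, but crucially $L$ acting on $(t+2C_0|x|)_+$ at a point inside $\mathcal{C}^+$ sees the region where this function \emph{vanishes} (the past cone direction), which produces a large \emph{negative} contribution $-c\,M$ to $Lw$ with $c = c(n,s,\lambda,C_0) > 0$; then $(\p_t+L)w \leq M - cM + C \leq 0$ for $M$ large. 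This is the step where supercriticality $s < \frac12$ enters, exactly as in Proposition \ref{prop:hopf}: the integral $\int (t + 2C_0|x| - (\cdots))_-|y|^{-n-2s}\,dy$ over the past-cone-shaped set must be finite and give a genuine gain, which requires the kernel to be integrable in the relevant regime, i.e. the linear profile has the right homogeneity relative to the $2s$-order operator. Third, check $w \geq v_t$ on the parabolic boundary of $\mathcal{C}^+$: on the lateral boundary $\{t = 2C_0|x|^{}\}$ and the bottom $\{t=0\}$ we have $t + 2C_0|x| \geq 0$ is small but the bump term (sized like $\bar a$ times the $L^\infty$ bound) dominates $v_t$ uniformly, and on $\{t = r^{2s}\}$ enlarging $M$ gives $M(r^{2s} + 2C_0|x|) \geq \|v_t\|_{L^\infty}$; one must also handle the \emph{nonlocal} boundary condition, i.e. that $w \geq v_t$ on all of $\R^n \times (0, r^{2s})$ outside $\mathcal{C}^+$, which again follows because outside the cone $(t + 2C_0|x|)_+$ is already bounded below by a positive multiple of $\|v_t\|_{L^\infty}$ away from the tip and the bump covers the rest. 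Fourth, apply the comparison principle for $\p_t + L$ (valid here since $v_t$ and $w$ are classical, as noted in the footnote to Proposition \ref{prop:hopf}) to conclude $v_t \leq w$ in $\mathcal{C}^+$, so in particular $v_t(0,t) \leq M t$ for $t \in (0, r^{2s})$; undoing the translation gives the claim for $t \in (0,\delta)$ with $\delta = r^{2s}$. Finally, for $t \geq \delta$ the bound $v_t(x_0, t_0+t) \leq Mt$ is immediate from the global bound $\|v_t\|_{L^\infty} \leq C\|\varphi\|_{C^{0,1}}$ after possibly enlarging $M$, so the estimate holds for all $t > 0$.

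The main obstacle I anticipate is making the sign and size of $Lw$ on $\mathcal{C}^+$ precise: one needs that the negative contribution coming from integrating against the region where $(t + 2C_0|x|)_+$ is zero genuinely beats the (positive) contribution from the cone's Lipschitz part plus the bump, uniformly as the evaluation point approaches the tip of the cone. Near the tip the function $(t+2C_0|x|)_+$ is small, so both the positive and negative contributions shrink, and one has to check that the \emph{ratio} stays favourable — this is exactly where $s < \frac12$ is essential, because it guarantees $\|L(t+2C_0|x|)_+\|_{L^\infty}$ is finite (the profile is Lipschitz and $2s < 1$), so that $Lw$ scales like $M$ uniformly and the competition $M - cM + C \leq 0$ can be won by a single choice of $M$ independent of the base point $(x_0,t_0)$ in the compact time slab. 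The bump term's $L$-contribution is harmless because it is supported in a fixed ball far from $B_R \supset \operatorname{supp}\varphi$, so $|L(\text{bump})|$ is bounded by a constant depending only on $R, n, s, \Lambda$; once this bookkeeping is done, the comparison argument is routine and parallels Proposition \ref{prop:hopf} almost verbatim.
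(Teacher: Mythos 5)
Your geometric starting point is the same as the paper's: the backward cone $\mathcal{C}=\{t_0-r^{2s}<t<t_0-2C_0|x-x_0|\}$ lies in the contact set, so $v_t\equiv 0$ there. But the comparison argument you build on it has genuine gaps. First, a sign problem: to conclude $v_t\le w$ in a region where $(\p_t+L)v_t=0$, you need $w$ to be a \emph{supersolution}, $(\p_t+L)w\ge 0$, whereas you aim at $(\p_t+L)w\le 0$, and the mechanism you invoke (a large negative contribution $-cM$ to $Lw$ from the set where $(t+2C_0|x|)_+$ vanishes) is the one used for the \emph{subsolution} in Proposition \ref{prop:hopf} and does not transfer: $L$ acts only in $x$ at fixed time, and for $t>0$ the profile $t+2C_0|x|$ is strictly positive everywhere, so no such contribution exists; in fact $L\big(t+2C_0|x|\big)\le 0$ by convexity, with size only $O\big(M|x|^{1-2s}\big)$, so the inequality $M-cM+C\le 0$ you plan to win by taking $M$ large never materializes. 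Second, and this is the fatal gap: taking the forward cone $\mathcal{C}^+=\{2C_0|x|<t<r^{2s}\}$ as the comparison domain forces you to verify $w\ge v_t$ \emph{a priori} at every point outside the cone at each time, in particular at points just outside its lateral boundary near the tip. Those points lie above the free boundary, so $v_t>0$ there, and the only a priori information is $v_t\le\|v_t\|_{L^\infty}$, while your barrier is of size $\approx M(t+2C_0|x|)$, i.e.\ small near the tip; the constant bump cannot help since it is supported in a fixed ball far from $x_0$. Knowing that $v_t$ is small near the free boundary is precisely what the proposition asserts, so this boundary verification is circular.

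The paper's proof avoids exactly this by choosing the comparison domain $\Omega_\rho=B_\rho\times(-\rho,\rho)\setminus\mathcal{C}$ (the full cylinder minus the \emph{backward} cone), so that the complement where the inequality must be known beforehand consists only of the backward cone, where $v_t=0$, and the exterior of the cylinder, where the barrier is bounded below by $1$ and hence $Mh_\rho\ge\|v_t\|_{L^\infty}\ge v_t$ with $M=\|v_t\|_{L^\infty}$. The barrier is the anisotropically rescaled truncated linear function $h_\rho(x,t)=h(4C_0\rho^{-1}x,\rho^{-1}t)$ with $h=\min\{4C_0+1,(t+|x|)_+\}$, and supercriticality enters through the scaling
$$(\p_t+L)h_\rho\ \ge\ \rho^{-1}-C_1(4C_0)^{2s}\rho^{-2s}\ \ge\ 0 \quad\text{for } \rho \text{ small},$$
i.e.\ the time derivative beats the order-$2s$ operator; no cancellation from the vanishing set is needed. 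To repair your plan, replace the forward cone by such a cylinder-minus-backward-cone domain, check the supersolution inequality with the correct sign via this scaling, and verify the exterior inequality through the truncation level of the barrier; the conclusion $v_t(x_0,t_0+t)\le M\rho^{-1}t$ for $t\in(0,\rho)$, combined with the trivial bound $v_t\le\rho^{-1}\|v_t\|_{L^\infty}t$ for $t\ge\rho$ (which you did state correctly), then gives the proposition.
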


\begin{proof}
By Proposition \ref{prop:FB_es_mou}, there exists $C_0$ such that $|\nabla v| \leq C_0v_t$ in $\R^n\times[\frac{t_1}{2},\frac{t_2+T}{2}]$. Assume without loss of generality that $C_0 \geq 1$.

Now, there exists $r > 0$ such that for all $(x,t) \in \R^n\times[t_1,t_2]$, 
$$Q_r(x,t) \subset \R^n\times\left(\frac{t_1}{2},\frac{t_2+T}{2}\right).$$

Let $(x_0,t_0)$ be a free boundary point with $t_0 \in [t_1,t_2]$, and define the cone
$$\mathcal{C} = \{t_0 - r^{2s} < t < t_0 - 2C_0|x - x_0|\} \subset \R^n\times\left(\frac{t_1}{2},\frac{t_2+T}{2}\right).$$
Notice that this cone is \textit{backwards}, whereas the cone defined in the proof of Proposition \ref{prop:hopf} was \textit{forward}. Since $C_0$ is also the Lipschitz constant of the free boundary in $\R^n\times(\frac{t_1}{2},\frac{t_2+T}{2})$, $\mathcal{C}$ is entirely \textit{below} the free boundary, and then $v_t \equiv 0$ in $\mathcal{C}$.

Assume after a translation that $(x_0,t_0) = (0,0)$. Now, we want to construct a supersolution in $$\Omega_\rho = B_{\rho}\times(-\rho,\rho) \setminus \mathcal{C},$$
with $\rho \in (0,r)$ to be chosen later.

To do so, we introduce the auxiliary function $h(x,t) := \min\{4C_0+1,(t+|x|)_+\}$. First, we notice $\p_th \equiv 1$ in $\{h > 0\}\cap Q_1$ and estimate $Lh$ as follows.
$$\|Lh\|_{L^\infty(\R^n\times\R)} \leq C_1\|h\|_{C^{0,1}(\R^n\times\R)} = C_1.$$

Let now $h_\rho(x,t) = h(4C_0\rho^{-1}x, \rho^{-1}t)$. By the scaling of the equation (notice that the bound on $Lh$ depends on the ellipticity constants but not on the particular operator),
$$(\p_t+L)h_\rho \geq \rho^{-1} - C_1(4C_0)^{2s}\rho^{-2s} \geq 0 \quad \text{in} \quad \Omega_\rho,$$
provided that $\rho$ is small enough. Notice that $\rho$ depends only on $t_1$, $t_2$, $T$, the dimension, $s$ and the ellipticity constants.

Finally, let us check that there exists $M > 0$ such that $v_t \leq Mh_\rho$ in $\Omega_\rho$. To do so, we will check that $v_t \leq Mh_\rho$ in the parabolic boundary of $\Omega_\rho$. Indeed, $v_t = 0 \leq Mh_\rho$ in $\mathcal{C}$ for any positive $M$. 

On the other hand, if we choose $M = \|v_t\|_{L^\infty(\R^n\times(0,T))}$, for all $t \in [-\rho,\rho]$ and $x \not\in B_\rho$, 
$$h_\rho(x,t) = \min\{1, \rho^{-1}(t+4C_0|x|)_+\} \geq \min\{1, \rho^{-1}(-\rho+4C_0\rho)_+\} = 1,$$
and for all $x \in \overline{B_\rho\times(-\rho,\rho)\setminus\mathcal{C}}$, $|x|\geq \frac{\rho}{2C_0}$, and therefore
$$h_\rho(x,-\rho) = \min\{1, \rho^{-1}(-\rho + 4C_0|x|)_+\} \geq \min\{1, (-1+2)_+\} = 1.$$
Hence,
$$v_t \leq \|v_t\|_{L^\infty(\R^n\times(0,T))} = M = Mh_\rho(x,t)$$
in the whole parabolic boundary of $\Omega_\rho$, and together with the fact that $(\p_t+L)h_\rho \geq 0$ in $\Omega_\rho$ we can conclude that $v_t \leq Mh_\rho$ in $\Omega_\rho$ by the comparison principle.

In particular, for every free boundary point $(x_0,t_0) \in \R^n\times[t_1,t_2]$, we have 
$$v_t(x_0,t_0+t) \leq Mt \quad \text{for} \quad t \in (0,\rho),$$
with uniform $M$ and $\rho$.

To conclude, observe that $v_t(x_0,t_0+t) \leq \rho^{-1}\|v_t\|_{L^\infty(\R^n\times(0,T))}t$ for all $t \geq \rho$, completing the proof.
\end{proof}

Now, using the previous estimate and the semiconvexity, we are ready to prove the global $C^{1,1}$ regularity of the solutions.

\begin{prop}\label{prop:C11}
Let $s \in (0,\frac{1}{2})$, and let $u$ be the solution of (\ref{eq:prev_problem}) with $L$ an operator satisfying (\ref{eq:operator}) and (\ref{eq:operator_elliptic}), and $\varphi \in C^{2,1}_c(\R^n)$. Let $0 < t_1 < t_2 < T$. Then, there exists $C > 0$ such that
$$\|D^2_xu\|_{L^\infty(\R^n\times[t_1,t_2])} + \|\p_t\nabla u\|_{L^\infty(\R^n\times[t_1,t_2])} + \|\p_{tt}u\|_{L^\infty(\R^n\times[t_1,t_2])} \leq C.$$
The constant $C$ depends only on $\varphi$, $t_1$, $t_2$, $T$, $s$, the dimension and the ellipticity constants.
\end{prop}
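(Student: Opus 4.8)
The plan is to derive all three second-order bounds from the information already assembled: the semiconvexity from Proposition~\ref{prop:semiconvex} (a two-sided control once paired with an upper bound), the linear growth estimates for $v_t$ from Propositions~\ref{prop:hopf} and~\ref{prop:anti-hopf}, the relation $u_t = (Lu)^-$ from Proposition~\ref{prop:t_C1alpha}, and the Lipschitz graph structure of the free boundary from Proposition~\ref{prop:FB_es_mou}. The key point is that semiconvexity gives $\p_{ee}u \geq -\hat C$ for every unit vector $e$ in $(x,t)$; to upgrade this to a genuine $C^{1,1}$ bound we only need a matching upper bound $\p_{ee}u \leq C$ in an averaged/viscosity sense, and it suffices to bound $\p_{tt}u$ from above, since then for a general direction $e = (e_x,e_t)$ the mixed and spatial second derivatives are controlled by interpolating the one-dimensional convexity estimates in the directions $e$, $-e$, $t$, and $-t$ (a standard linear-algebra argument: if $\p_{ee}u\ge -\hat C$ for all $e$ and $\p_{tt}u\le C$, one gets $\|D^2_{x,t}u\|\le C'$).

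First I would establish the upper bound on $\p_{tt}u$. In the interior of the contact set $v_t\equiv 0$, so $\p_{tt}u=\varphi_{tt}=0$ there; in the interior of $\{u>\varphi\}$ one has $\p_t w + Lw = 0$ with $w=u_t$, and interior parabolic estimates (Proposition~\ref{prop:interior_alphabeta}, or the heat-kernel estimates of Appendix~\ref{sect:appendix_heat}) give local bounds on $\p_t w = \p_{tt}u$ away from the free boundary. The only delicate region is a neighbourhood of the free boundary. There I would use the two-sided linear bound $c_0 t \le v_t(x_0,t_0+t)\le M t$ from Propositions~\ref{prop:hopf}–\ref{prop:anti-hopf}, valid at every free boundary point $(x_0,t_0)\in\R^n\times[t_1,t_2]$ and, after using that the free boundary is a Lipschitz graph $t=\Gamma(x)$, transferred to a full neighbourhood: for $(x,t)$ just above the free boundary, writing $t_\ast=\Gamma(x)$, one has $v_t(x,t)\le M(t-t_\ast)$ while $v_t(x,t)\ge 0$, and the same at nearby points, so $v_t$ is Lipschitz in $t$ near the free boundary with constant $M$, i.e. $\p_{tt}u=\p_t v_t$ is bounded above (and below) in an averaged sense. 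Combined with semiconvexity $\p_{tt}u\ge -\hat C$, this pins down $|\p_{tt}u|\le C$ near the free boundary.

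Next, to make the "Lipschitz in $t$'' conclusion rigorous rather than heuristic, I would argue by difference quotients: fix $(x,t)$ with $t\in[t_1,t_2]$ and $h>0$ small, and bound $|v_t(x,t+h)-v_t(x,t)|$ by splitting into the case where the segment $\{x\}\times[t,t+h]$ lies entirely in $\{u>\varphi\}$, entirely in the contact set, or crosses the free boundary once (which is the generic case, since the free boundary is a graph in $t$). In the crossing case the segment meets the free boundary at a single time $t_\ast$, both endpoints contribute via the linear bounds measured from $t_\ast$, and one gets $|v_t(x,t+h)-v_t(x,t)|\le (M+\hat C)h$. In the purely-interior case one uses the interior estimate, which near the free boundary still gives the right scaling because $v_t$ vanishes linearly there. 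This yields $\|\p_{tt}u\|_{L^\infty(\R^n\times[t_1,t_2])}\le C$. Finally I would feed this back: with $\p_{tt}u$ bounded and $\p_{ee}u\ge-\hat C$ for all $e$, the interpolation argument gives $\|D^2_x u\|_{L^\infty}+\|\p_t\nabla u\|_{L^\infty}\le C$, and also $u_t = (Lu)^-$ together with the $C^\alpha$ (indeed now $C^{0,1}$) estimates on $Lu$ can be used to double-check $\p_t\nabla u$ directly.

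The main obstacle I anticipate is the free-boundary neighbourhood: turning the pointwise-at-free-boundary linear estimates of Propositions~\ref{prop:hopf} and~\ref{prop:anti-hopf} into a uniform Lipschitz bound for $v_t$ in the $t$-variable on a full space-time neighbourhood, uniformly over all free boundary points in the slab $[t_1,t_2]$. This requires the Lipschitz graph property (so that "distance to the free boundary along the $t$-axis'' is comparable to $t-\Gamma(x)$) and a little care to handle points whose vertical segment crosses the free boundary, but no new idea beyond the estimates already proved; the interior and contact-set regions are routine.
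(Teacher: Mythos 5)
Your reduction contains a genuine gap, and it is the central one. You propose to prove only an upper bound on $\p_{tt}u$ and then conclude the full bound on $D^2_{x,t}u$ from semiconvexity via a ``standard linear-algebra argument: if $\p_{ee}u\ge -\hat C$ for all $e$ and $\p_{tt}u\le C$, one gets $\|D^2_{x,t}u\|\le C'$.'' This implication is false. Take the symmetric matrix $A=\mathrm{diag}(K,0,\dots,0)$ with a huge entry $K>0$ in a spatial direction: it satisfies $A\ge 0\ge -\hat C\,\mathrm{Id}$ in every direction and has $(n{+}1,n{+}1)$-entry equal to $0\le C$, yet its norm is arbitrarily large. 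Semiconvexity plus an upper bound on the pure $t$-second derivative gives no control whatsoever on $D^2_xu$ (nor, consequently, on $\p_t\nabla u$; writing $A+\hat C\,\mathrm{Id}\succeq 0$ only bounds the mixed entries by $\sqrt{(A_{tt}+\hat C)(A_{ii}+\hat C)}$, with $A_{ii}$ uncontrolled). Your fallback remark that $u_t=(Lu)^-$ could ``double-check'' $\p_t\nabla u$ is circular: since $L$ has order $2s<1$, Lipschitz bounds on $Lu$ require, rather than provide, control of $D^2_xu$. This is precisely why the paper does not argue only in the $t$-direction: it uses the directional monotonicity $\eta|\nabla v|\le v_t$ from Proposition \ref{prop:FB_es_mou} to get $\p_\nu v\ge 0$ for every $\nu=e_{n+1}+\eta e$ in an open cone of space-time directions, proves an upper bound for the incremental quotients of $\p_\nu v$ along $\nu$ for \emph{all} such $\nu$ (the monotonicity guarantees that translates of points of $\{v>0\}$ stay in $\{v>0\}$ so the differentiated equation is available there; Proposition \ref{prop:anti-hopf} controls the quotient on the contact set; semiconvexity gives a uniform $L^1$ bound; and Lemma \ref{lem:CD16} upgrades $L^1$ to $L^\infty$), and only then a polarization in $\lambda$ at $\lambda=0,\pm\tfrac12$ recovers $v_{\hat e\hat e}$ and $v_{t\hat e}$. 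Bounding second derivatives over an open cone of directions, not just along $e_{n+1}$, is the missing idea.

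A secondary, more repairable issue: the pointwise bound $v_t(x_0,t_0+t)\le Mt$ at free boundary points does not immediately give that ``$v_t$ is Lipschitz in $t$ near the free boundary with constant $M$.'' For points whose vertical distance $\rho=t-\Gamma(x)$ to the free boundary is large compared with $h$, you must use interior estimates at scale $r\sim\rho^{1/(2s)}$, and there the nonlocal tail is not small: it is only controlled because the growth bound $v_t\le M(t-\Gamma)_+$ is global in the slab and $v_t$ is globally bounded, and this needs to be carried out explicitly. That step could be patched, but even done correctly it would only yield $\|\p_{tt}u\|_{L^\infty}\le C$, which, as explained above, does not imply the statement of the proposition.
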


\begin{proof}
By Proposition \ref{prop:FB_es_mou}, there exists $\eta \in (0,1)$ such that $\eta|\nabla v| \leq v_t$ in $\R^n\times[\frac{t_1}{3},\frac{t_2+2T}{3}]$. Let $e$ be a vector in the $x$ directions with $|e| \leq 1$, and let $\nu = e_{n+1} + \eta e$. Thus, $\p_\nu v = (\p_t+\eta\p_e)v \geq 0$ in $\R^n\times[\frac{t_1}{3},\frac{t_2+2T}{3}]$.

Besides, for any given $(x,t) \in \R^n\times(0,T)$ and $r \in (0,2^{-1-\frac{1}{2s}}t)$, consider the cutoff $\psi \in C^\infty_c(Q_{2^{1+\frac{1}{2s}}r}(x,t))$ with $\psi \equiv 1$ in $Q_{2^{\frac{1}{2s}}r}(x,t)$. By Proposition \ref{prop:semiconvex}, since $|\nu| \leq \sqrt{2}$, $v_{\nu\nu} \geq -2\hat{C}$, and $\hat{C}$ does not depend on the choice of $\nu$. Then,

$$0 \leq \int_{Q_{2^{\frac{1}{2s}}r}(x,t)}\!v_{\nu\nu}+2\hat{C} \leq \int_{Q_{2^{1+\frac{1}{2s}}r}(x,t)}\!(v_{\nu\nu}+2\hat{C})\psi = \int_{Q_{2^{1+\frac{1}{2s}}r}(x,t)}\!v\psi_{\nu\nu} + 2\hat{C}\psi \leq C(r),$$
and then $\|v_{\nu\nu}\|_{L^1(Q_{2^{\frac{1}{2s}}r}(x,t))} \leq C(r) + 2\hat{C}|Q_{2^{\frac{1}{2s}}r}| =: C_1(r)$. Observe that this bound is independent of $(x,t)$ and $\nu$.

Then we define the auxiliary function
$$w := \frac{\p_\nu v(x+\eta he, t+h) - \p_\nu v(x,t)}{h} = \frac{1}{h}\int_0^h\p_{\nu\nu}v(x+\eta \zeta e,t+\zeta)\mathrm{d}\zeta.$$

Since $w$ is an average of $v_{\nu\nu}$, we can obtain a $L^1$ bound as well. Let $h \in (0,r)$. Then,
$$\|w\|_{L^1(Q_r(x,t))} \leq \frac{1}{h}\int_0^h\|v_{\nu\nu}\|_{L^1(Q_r(x+\eta \zeta e,t+h))}\mathrm{d}\zeta \leq \|v_{\nu\nu}\|_{L^1(Q_{2^{\frac{1}{2s}}r}(x,t))} = C_1(r).$$

This shows that $w \in L^1((t_3,t_4] \to L^1_s(\R^n))$ for any $t_3,t_4 \in (0,T-h]$. Let us compute it:

Let $r \in (0,2^{-1-\frac{1}{2s}}t_3)$ and $N = \lceil\frac{t_4-t_3}{2r}\rceil$. Then, we decompose the space in the following way:
\begin{align*}
&\|w\|_{L^1((t_3,t_4] \to L^1_s(\R^n))} \leq \sum\limits_{i = 0}^{N-1}\|w\|_{L^1((t_3 + 2ir,t_3 + 2(i+1)r] \to L^1_s(\R^n))} + \|w\|_{L^1((t_4-2r,t_4]\to L^1_s(\R^n))}\\
&= \sum\limits_{i = 0}^{N-1} \int_{t_3+2ir}^{t_3+2(i+1)r}\int_{\R^n}\frac{|w(x,t)|}{1+|x|^{n+2s}}\mathrm{d}x\mathrm{d}{t} + \int_{t_4-2ir}^{t_4}\int_{\R^n}\frac{|w(x,t)|}{1+|x|^{n+2s}}\mathrm{d}x\mathrm{d}{t}\\
&\leq \sum\limits_{i = 0}^{N-1} \sum\limits_{x \in \Z^n} \int_{t_3+2ir}^{t_3+2(i+1)r}\int_{B_r(rx/\sqrt{n})}\frac{|w(x,t)|}{1+|x|^{n+2s}}\mathrm{d}x\mathrm{d}{t}\\
&\quad + \sum\limits_{x \in \Z^n} \int_{t_4 - 2ir}^{t_4}\int_{B_r(rx/\sqrt{n})}\frac{|w(x,t)|}{1+|x|^{n+2s}}\mathrm{d}x\mathrm{d}{t}\\
&= \sum\limits_{i = 0}^{N-1} \sum\limits_{x \in \Z^n} \int_{Q_r(rx/\sqrt{n},t_3+(2i+1)r)}\frac{|w(x,t)|}{1+|x|^{n+2s}}\mathrm{d}x\mathrm{d}{t} +  \sum\limits_{x \in \Z^n} \int_{Q_r(rx/\sqrt{n},t_4-r)}\frac{|w(x,t)|}{1+|x|^{n+2s}}\mathrm{d}x\mathrm{d}{t}\\
&\leq N\sum\limits_{x\in \Z^n}\frac{C_1(r)}{1 + (|rx/\sqrt{n}|-r)_+^{n+2s}} =: NC_2(r).
\end{align*}

Moreover, let $\tau_yw$ be the translation of $w$ by the vector $y \in \R^n$. Analogously, we can deduce that
$$\|\tau_yw\|_{L^1((t_3,t_4]\to L^1_s(\R^n))} \leq NC_2(r),$$
independently of $y$.

Now, recall that $v$ is a solution of $(\p_t + L)v = -L\varphi$ in the set $\{v > 0\}$. Furthermore, if $v > 0$ at $(x,t) \in \R^n\times[\frac{t_1}{3},\frac{t_2+2T}{3}]$, since $\p_\nu v \geq 0$, $v(x+\eta he,t+h) > 0$ also holds (provided that $t + h \leq \frac{t_2+2T}{3}$), and it follows that the translated function is also a solution. Hence,
$$\p_tw+Lw = \eta\frac{\p_e L\varphi(x) - \p_e L\varphi(x + \eta h e)}{h} \quad \text{in} \quad \{v > 0\}\cap\left(\R^n\times\left[\frac{t_1}{3},\frac{t_2+2T}{3} - h\right]\right),$$
and then $|\p_tw + Lw| \leq C\|L\varphi\|_{C^{1,1}(\R^n)} \leq C\|\varphi\|_{C^{2,1}(\R^n)}$ in 
$$\{v > 0\}\cap\left(\R^n\times\left[\frac{t_1}{3},\frac{t_2+2T}{3} - h\right]\right) \subset \{v > 0\}\cap\left(\R^n\times\left[\frac{2t_1}{3},\frac{2t_2+T}{3}\right]\right),$$
provided that $h$ is small enough.

Moreover, if $(x_1,t_1) \in \{v = 0\}\cap(\R^n\times[\frac{2t_1}{3},\frac{2t_2+T}{3}])$, then $\p_\nu v(x_1,t_1) = 0$, and using Proposition \ref{prop:anti-hopf} and taking $h$ small enough, it follows that
\begin{align*}
    w(x_1,t_1) &= \frac{\p_\nu v(x_1+\eta he,t_1+h)}{h} \leq \frac{2v_t(x_1+\eta he,t_1+h)}{h}\\
    &\leq \frac{2M(t_1+h-\Gamma(x_1+\eta he))_+}{h} \leq \frac{2M(h+C_0\eta h|e|+t_1-\Gamma(x_1))}{h}\\
    &\leq 4M.
\end{align*}
Therefore, $\tilde{w} = \max\{w,4M\}$ is a subsolution for
$$\p_t\tilde{w}+L\tilde{w}\leq C\|\varphi\|_{C^{2,1}(\R^n)} \quad \text{in} \quad \R^n\times\left[\frac{2t_1}{3},\frac{2t_2+T}{3}\right],$$
and we can apply Lemma \ref{lem:CD16} to $\tau_y\tilde{w}$ obtain
$$\sup\limits_{B_1\times[t_1,t_2]} \tau_y\tilde{w} \leq C\left(\|\tau_y\tilde{w}\|_{L^1\left(\left(\frac{2t_1}{3},\frac{2t_2+T}{3}\right]\to L^1_s(\R^n)\right)} + \|\varphi\|_{C^{2,1}(\R^n)}\right),$$
with $C$ depending only on $t_1$, $t_2$, $T$, the dimension, the ellipticity constants and $s$. Then, since the bound is uniform on $y$, it follows from the definition of $w$ that
$$\sup\limits_{\R^n\times[t_1,t_2]} w \leq C(NC_2(r) + 2M + \|\varphi\|_{C^{2,1}(\R^n)}) =: C_0.$$

Since $C_0$ does not depend on $\nu$ or $h$, combining this with Proposition \ref{prop:semiconvex}, it follows that $\|v_{\nu\nu}\|_{L^\infty(\R^n\times[t_1,t_2])} \leq C_* = \max\{C_0,2\hat{C}\}$ for all $\nu = e_{n+1} + \eta e$ with $e$ in the $x$ direction and $|e| < 1$.

Now, let $e = \lambda\hat{e}$ with $\hat{e}$ a unit vector. Then,
$$D^2_{e_{n+1}+\eta e}v = v_{tt} +\eta(v_{te} + v_{et}) + \eta^2 v_{ee} = v_{tt} + \eta\lambda(v_{t\hat{e}}+v_{\hat{e}t}) + \eta^2\lambda^2v_{\hat{e}\hat{e}}.$$

Since this expression is bounded by $C_*$ for all values of $\hat{e}$ and $\lambda \in (-1,1)$, we can evaluate at $\lambda = 0,\frac{1}{2},-\frac{1}{2}$ to get:
\begin{align*}
    &|v_{tt}| \leq C_*\\
    &\left|v_{tt} + \frac{1}{2}\eta(v_{t\hat{e}} + v_{\hat{e}t}) + \frac{1}{4}\eta^2v_{\hat{e}\hat{e}}\right| \leq C_*\\
    &\left|v_{tt} - \frac{1}{2}\eta(v_{t\hat{e}} + v_{\hat{e}t}) + \frac{1}{4}\eta^2v_{\hat{e}\hat{e}}\right| \leq C_*,
\end{align*}
and then it is easy to check that $|v_{\hat{e}\hat{e}}| + |v_{t\hat{e}} + v_{\hat{e}t}| \leq C(\eta)C_*$.

Hence, for any $e \in \mathbb{S}^n$ (all unit vectors in $x,t$), $|v_{ee}| \leq C'(\eta)C_*$. Then, given two points $(x_1,t_1)$ and $(x_2,t_2)$ in $\R^n\times[t_1,t_2]$,
$$|v(x_1,t_1) - \nabla_{x,t}v(x_1,t_1)\cdot(x_2-x_1,t_2-t_1) - v(x_2,t_2)| \leq C'(\eta)C_*\|(x_1-x_2,t_1-t_2)\|^2.$$
This means that $v \in C^{1,1}(\R^n\times[t_1,t_2])$, and $u = v + \varphi$ as well.
\end{proof}

We can now give the:
\begin{proof}[Proof of Theorem \ref{thm:1.1}]
The global Lipschitz regularity follows from Proposition \ref{prop:global_lip}. The $C^{1,1}$ regularity follows from Proposition \ref{prop:C11}.
\end{proof}

\section{Regularity of the free boundaries}\label{sect:FB}
In this section we use the regularity of the solutions established before to deduce the regularity of the free boundaries. Here again, we will use crucially the fact that $s < \frac{1}{2}$. We first take advantage of the different orders of derivation in the equation (\ref{eq:prev_problem}) to obtain further regularity in $t$.

\begin{lem}\label{lem:C2-2s}
Let $s \in (0,\frac{1}{2})$, let $u$ be the solution of (\ref{eq:prev_problem}) with $L$ an operator satisfying (\ref{eq:operator}) and (\ref{eq:operator_elliptic}), and $\varphi \in C^{2,1}_c(\R^n)$. Let $v = u - \varphi$, and let $0 < t_1 < t_2 < T$. Then, there exists $C > 0$ such that
$$\|v_{tt}\|_{C^{\alpha}((\R^n\times[t_1,t_2]) \cap \{v > 0\})} + \sum\limits_{i = 0}^n\|v_{ti}\|_{C^{\alpha}((\R^n\times[t_1,t_2]) \cap \{v > 0\})} \leq C.$$
where $\alpha = 1 - 2s > 0$.
\end{lem}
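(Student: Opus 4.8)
The strategy is to bootstrap from the $C^{1,1}$ regularity of Theorem~\ref{thm:1.1} (via Proposition~\ref{prop:C11}) using the fact that the nonlocal operator $L$ has order $2s < 1$. The point is that $v_{tt}$, $v_{ti}$ all satisfy the \emph{same} linear nonlocal parabolic equation in the (open) positivity set $\{v > 0\}$, with a right-hand side coming from $L\varphi$, and $L$ applied to a merely Lipschitz-in-$x$ function produces something $C^{1-2s}_x$ (this is exactly the computation already done in the proof of Proposition~\ref{prop:t_C1alpha}). So the idea is: differentiate the interior equation twice, apply interior Schauder-type estimates for the linear nonlocal heat equation (Appendix~\ref{sect:appendix_heat}, e.g.\ Proposition~\ref{prop:interior_alphabeta}), and read off a gain of $1-2s$ derivatives in the time-type directions.

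First I would fix $0 < t_1 < t_2 < T$ and work in the open set $\Omega := \{v > 0\} \cap (\R^n \times (\tfrac{t_1}{2}, \tfrac{t_2+T}{2}))$. In $\Omega$, $v$ solves $(\p_t + L)v = -L\varphi =: g$, and by Proposition~\ref{prop:C11} we already know $v \in C^{1,1}$ up to $[t_1,t_2]$, in particular $D^2_x v$, $\p_t \nabla v$ and $v_{tt}$ are bounded on $\R^n \times [t_1,t_2]$. Let $\nu$ be any of the directions $e_{n+1}$ (time) or a mixed direction $e_{n+1} + e$ with $e$ a coordinate $x$-direction, and set $w = \p_t \p_\nu v$ (this captures $v_{tt}$ and all $v_{ti}$ as linear combinations). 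Since translates of $v$ in the $t$ and mixed $(x,t)$-directions also solve the equation inside $\{v>0\}$ — here one uses $\p_\nu v \ge 0$, exactly as in the proof of Proposition~\ref{prop:C11}, so that a small translate stays inside the positivity set — a difference-quotient argument shows
$$(\p_t + L)w = \p_t\p_\nu g \quad \text{in} \quad \Omega.$$
Because $\varphi \in C^{2,1}_c$, we have $g = -L\varphi \in C^{1,1}_x$ by the same estimate used in Proposition~\ref{prop:t_C1alpha} (for $s < \tfrac12$, $L$ maps $C^{1,1}$ into $C^{1,1}$ boundedly since $|y|^2 K(y)$ is integrable near $0$ and $K$ is integrable at infinity), and it is independent of $t$; hence $\p_t\p_\nu g$ is bounded, indeed in $C^\alpha_x$, with $\alpha = 1-2s$.

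The second and main step is to apply interior regularity for the linear nonlocal heat equation to $w$. One knows $w \in L^\infty(\R^n \times [t_1,t_2])$ from $C^{1,1}$, and $(\p_t+L)w$ is bounded in $\Omega$; the interior estimates of Appendix~\ref{sect:appendix_heat} then upgrade $w$ to $C^\alpha$ on compact subsets of $\Omega$, i.e.\ $C^\alpha$ on $(\R^n \times [t_1,t_2]) \cap \{v > 0\}$ (the statement is about the \emph{interior} of the positivity set in the $(x,t)$ variables, so distance-to-$\{v=0\}$ does not enter as long as we stay in $\{v>0\}$, which is open). Since $v_{tt}$, $v_{ti}$ for $i = 1,\dots,n$, and $v_{tt} + \eta(v_{ti} + v_{it}) + \eta^2 v_{ii}$-type combinations are all of the form $\p_t\p_\nu v$ for admissible $\nu$, taking finitely many such $\nu$ and solving the resulting linear system (as in the end of the proof of Proposition~\ref{prop:C11}) recovers $C^\alpha$ bounds on each $v_{tt}$ and $v_{ti}$ separately, with constants depending only on $\varphi$, $t_1$, $t_2$, $T$, $n$, $s$ and the ellipticity constants.

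The main obstacle I anticipate is justifying the difference-quotient computation $(\p_t + L)w = \p_t \p_\nu g$ rigorously near the free boundary: one must ensure the small translate $v(x + \eta h e, t + h)$ is still a subsolution/solution where needed, which relies on $\p_\nu v \ge 0$ (Proposition~\ref{prop:FB_es_mou}) to stay inside $\{v > 0\}$, and one must handle the fact that $w$ a priori is only an $L^\infty$ (or $L^1$) object before the Schauder estimate promotes it. A clean way is to run the argument on $\p_\nu v$ first (which is $C^\alpha$ in $t$ by combining Proposition~\ref{prop:t_C1alpha} with the $C^{1,1}$ bound and interpolation), note $\p_\nu v$ solves $(\p_t+L)\p_\nu v = \p_\nu g$ in $\{v>0\}$, differentiate once more in $t$ using that $v_t$ and hence $\p_\nu v$ restricted to $\{v>0\}$ has enough regularity by the previous estimates, and then invoke the interior nonlocal Schauder estimate of the appendix to close. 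The role of $s < \tfrac12$ is exactly that it makes $L$ a \emph{lower-order} perturbation of $\p_t$, so differentiating in $t$ — not in $x$ — is what yields the gain, and the gain is precisely $\alpha = 1-2s$.
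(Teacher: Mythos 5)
There is a genuine gap at the decisive step. Your plan is to derive an equation for $w=\p_t\p_\nu v$ in $\Omega=\{v>0\}$ and then invoke the interior estimates of Proposition~\ref{prop:interior_alphabeta} to conclude that $w\in C^\alpha$ on $(\R^n\times[t_1,t_2])\cap\{v>0\}$. But interior Schauder-type estimates require the equation to hold in a full parabolic cylinder around each point, and after rescaling their constants degenerate like a negative power of the distance to $\p\{v>0\}$; your parenthetical claim that ``distance-to-$\{v=0\}$ does not enter as long as we stay in $\{v>0\}$'' is exactly what fails. The lemma asserts a \emph{uniform} $C^\alpha$ bound up to the free boundary (this is what is used later to get $u_t\in C^{1,\alpha}(\overline{\{u>\varphi\}})$, the $C^{1,\alpha}$ graph $\Gamma$, and the expansion at every free boundary point), and since $v_{tt}$ does not vanish at the free boundary (it tends to $c_0>0$ by Proposition~\ref{prop:hopf}), you cannot patch the degenerating interior estimates with a trivial decay bound either. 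As written, your argument only yields $C^\alpha$ bounds on compact subsets of the open positivity set, with constants blowing up at $\p\{v>0\}$; moreover the difference-quotient derivation of the equation for the second derivatives (translates staying inside $\{v>0\}$, $w$ a priori only $L^\infty$) introduces complications that are not needed.

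The paper's proof avoids all of this with an observation you almost make but do not exploit: for any unit vector $\nu$ in the $(x,t)$ variables, $w:=\p_\nu u$ is globally bounded and Lipschitz on $\R^n\times[t_1,t_2]$ by Proposition~\ref{prop:C11}, hence $Lw$ is $C^{\alpha}$ on all of $\R^n\times[t_1,t_2]$ with $\alpha=1-2s$, by exactly the kernel computation in the proof of Proposition~\ref{prop:t_C1alpha} (you invoke that computation only for $g=-L\varphi$, not for $\p_\nu u$ itself). Differentiating $u_t=-Lu$ once in the direction $\nu$ inside the open set $\{v>0\}$ gives $v_{t\nu}=\p_t w=-Lw$ there, so $v_{t\nu}$ coincides on $\{v>0\}$ with a function that is $C^\alpha$ globally; the uniform bound up to the free boundary is then automatic, with no Schauder estimate and no second differentiation of the equation. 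To salvage your route you would have to replace the interior estimates by this global identity (or by a genuine boundary regularity argument), since uniformity up to $\p\{v>0\}$ is the whole content of the lemma.
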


\begin{proof}
Let $\nu \in \mathbb{S}^n$ be any unit vector in $x$ and $t$, and let $w = \p_\nu u$. Then, by Proposition \ref{prop:C11}, $\|w\|_{C^{0,1}(\R^n\times[t_1,t_2])} \leq C$. Moreover, by the same arguments as in the proof of Proposition \ref{prop:t_C1alpha}, we deduce $\|Lw\|_{C^\alpha(\R^n\times[t_1,t_2])} \leq C$.

Then, since $v_t = u_t = -Lu$ in $\{v > 0\}$, differentiating the equation with respect to $\nu$ it follows that $w_t = -Lw$ in $\{v > 0\}$, and therefore $\|v_{t\nu}\|_{C^\alpha(\R^n\times[t_1,t_2])} \leq C$.
\end{proof}

We next show that the free boundary is $C^{1,\alpha}$.
\begin{thm}\label{thm:FB_C1alpha}
Let $s \in (0,\frac{1}{2})$, and let $u$ be the solution of (\ref{eq:prev_problem}) with $L$ an operator satisfying (\ref{eq:operator}) and (\ref{eq:operator_elliptic}), and $\varphi \in C^{2,1}_c(\R^n)$. Let $0 < t_1 < t_2 < T$.

Then, the free boundary is a $C^{1,\alpha}$ graph in the $t$ direction in $\R^n\times[t_1,t_2]$, i.e.
$$\p\{u > \varphi\}\cap(\R^n\times(t_1,t_2)) = \{t = \Gamma(x)\},$$
with $\Gamma \in C^{1,\alpha}$ and $\alpha = 1 - 2s > 0$.
\end{thm}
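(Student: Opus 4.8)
The plan is to show that $\Gamma$, which we already know is a Lipschitz function by Proposition \ref{prop:FB_es_mou}, is in fact $C^{1,\alpha}$. The starting point is the expansion near free boundary points: by Corollary \ref{cor:nondeg}, Proposition \ref{prop:anti-hopf} and the $C^{1,1}$ regularity (Proposition \ref{prop:C11}), at any free boundary point $(x_0,t_0)\in\R^n\times[t_1,t_2]$ we have $v(x_0,t_0+t)\asymp t^2$ for $t>0$, $v_t(x_0,t_0+t)\asymp t$, and $v$ is $C^{1,1}$. Together with the bound $|\nabla v|\le Cv_t$ this says that $v$ vanishes to exactly second order at $(x_0,t_0)$ and that the contact set near $(x_0,t_0)$ is trapped between two cones with vertex at $(x_0,t_0)$, i.e. $\Gamma$ is differentiable at $x_0$ with some gradient $a=a(x_0)$ and $\Gamma(x_0+x)=\Gamma(x_0)+a\cdot x+o(|x|)$.

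The core of the argument is to upgrade this pointwise differentiability to a \emph{uniform} modulus $o(|x|)\to O(|x|^{1+\alpha})$ and to show $x_0\mapsto a(x_0)$ is $\alpha$-H\"older. The key structural fact, exploiting $2s<1$, is that $v_t$ solves the \emph{nonlocal heat equation} $v_{tt}+Lv_t=-\partial_t L\varphi$ (which vanishes since $\varphi$ is time-independent, so actually $v_{tt}+Lv_t=0$) in $\{v>0\}$, and by Lemma \ref{lem:C2-2s} the second derivatives $v_{tt},v_{ti}$ are $C^\alpha$ up to the free boundary from inside $\{v>0\}$, with $\alpha=1-2s$. I would parametrize the free boundary from the positivity side: for fixed $x$, $\Gamma(x)$ is the unique time at which $v(x,\cdot)$ hits zero, and since $v(x,\Gamma(x))=0$, $v_t(x,\Gamma(x))=0$ (by the Hopf/anti-Hopf estimates the first two $t$-Taylor coefficients vanish), one has for $t>\Gamma(x)$
\begin{equation*}
v(x,t)=\tfrac12\, v_{tt}(x,\Gamma(x)^+)\,(t-\Gamma(x))^2+O\big((t-\Gamma(x))^{2+\alpha}\big),
\end{equation*}
with $c(x):=\tfrac12 v_{tt}(x,\Gamma(x)^+)$ bounded above and below by the nondegeneracy constants. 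Now apply the implicit function theorem / a quantitative Taylor comparison: writing $v_i(x,t)=0$ gives $c_i(x)(t-\Gamma(x))^2 - 2c(x)(t-\Gamma(x))\Gamma_i(x)+\dots$, and matching the $(t-\Gamma(x))$ and $(t-\Gamma(x))^2$ coefficients forces $\Gamma_i(x)=v_{ti}(x,\Gamma(x)^+)/v_{tt}(x,\Gamma(x)^+)$ up to controlled errors. Since $v_{ti},v_{tt}\in C^\alpha$ up to $\{v=0\}$ and $v_{tt}\ge c_0>0$, the quotient is $C^\alpha$ in $x$, giving $\Gamma\in C^{1,\alpha}$ with $a(x)=\nabla\Gamma(x)$.

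To make this rigorous I would proceed in steps: (i) from Proposition \ref{prop:hopf}, Proposition \ref{prop:anti-hopf} and Proposition \ref{prop:C11}, establish the two-sided quadratic behaviour and the cone trapping of $\{v=0\}$, hence Lipschitz $\Gamma$ with a well-defined gradient at each point; (ii) use Lemma \ref{lem:C2-2s} to get that $v_{tt}$ and $v_{ti}$ extend $C^\alpha$-continuously to the free boundary from $\{v>0\}$, and that the trace $v_{tt}(\cdot,\Gamma(\cdot))$ stays above $2c_0>0$; (iii) express $\Gamma$ via the vanishing of $v(x,\cdot)$ and differentiate the identity $v(x,\Gamma(x))=0$, $v_t(x,\Gamma(x))=0$ to obtain the formula $\nabla\Gamma(x)=\nabla_x v_t(x,\Gamma(x)^+)/v_{tt}(x,\Gamma(x)^+)$, justified by the $C^{1,1}$-plus-$C^\alpha$-second-derivatives Taylor expansion rather than a naive IFT (which does not apply since $v_t$ is only $C^\alpha$, not $C^1$, up to the boundary); (iv) conclude $\nabla\Gamma\in C^\alpha$. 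I expect step (iii) to be the main obstacle: one must carefully control how the interior $C^\alpha$ estimates for $v_{tt},v_{ti}$ degenerate as one approaches the free boundary and show that evaluating the "boundary trace" along the moving surface $t=\Gamma(x)$ is legitimate and H\"older-continuous — this is where the supercriticality $2s<1$ is used decisively, both to make $v_t$ solve a genuinely parabolic equation whose natural scaling matches the cylinders $Q_r$, and to produce the positive exponent $\alpha=1-2s$.
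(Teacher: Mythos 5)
Your proposal is correct and follows essentially the same route as the paper: combine the Lipschitz graph from Proposition \ref{prop:FB_es_mou}, the $C^\alpha$ bounds on $v_{tt},v_{ti}$ up to the free boundary from Lemma \ref{lem:C2-2s}, the lower bound $v_{tt}\ge c_0>0$ near the free boundary coming from Proposition \ref{prop:hopf}, and the level-set relation for $\{v_t=0\}$ giving $\nabla\Gamma = -\nabla_x v_t/v_{tt}\in C^\alpha$ (note the sign, a harmless slip in your write-up). The concerns you raise in step (iii) are handled in the paper simply by viewing the free boundary as the zero level surface of $v_t$ and observing that its normal vector, expressed through the quotients $v_{ti}/v_{tt}$, is uniformly $C^\alpha$ on a one-sided neighbourhood $E=\{\Gamma(x)\le t\le \Gamma(x)+\delta\}$.
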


\begin{proof}
We already know that the free boundary is a Lipschitz graph by Proposition \ref{prop:FB_es_mou}. Then, let $\alpha = 1 - 2s$. By Lemma \ref{lem:C2-2s},
$$\|v_{tt}\|_{C^{\alpha}((\R^n\times[t_1,t_2]) \cap \{v > 0\})} + \sum\limits_{i = 0}^n\|v_{ti}\|_{C^{\alpha}((\R^n\times[t_1,t_2]) \cap \{v > 0\})} \leq C.$$

Then, $v_{tt} > 0$ at the free boundary by Proposition \ref{prop:hopf}, and by continuity $v_{tt} \geq c_0$ in  $E = \{t \in [\Gamma(x), \Gamma(x) + \delta]\}\cap[t_1,t_2]$ for some small $\delta > 0$. Thus,
$$\left\|\frac{v_{ti}}{v_{tt}}\right\|_{C^{\alpha}(E)} \leq C.$$

Finally, notice that the free boundary can be seen as the zero level surface of $v_t$. The normal vector to the level surfaces of $v_t$ is given by the formula
$$\nu = \frac{\nabla_{x,t} v_t}{|\nabla_{x,t} v_t|} = \frac{(\p_{t1}v/\p_{tt}v,\ldots,\p_{tn}v/\p_{tt}v,1)}{\sqrt{1+\sum\limits_{j = 1}^n(\p_{tj}v/\p_{tt}v)^2}},$$
and therefore $\nu \in C^{\alpha}(E)$ uniformly, thus $\{v_t = 0\}$ is a $C^{1,\alpha}$ manifold, as desired.
\end{proof}

Once we know that the free boundary is a $C^{1,\alpha}$ graph, we can provide an expansion for the solution.
\begin{cor}\label{cor:expansion}
Let $s \in (0,\frac{1}{2})$, and let $u$ be the solution of (\ref{eq:prev_problem}) with $L$ an operator satisfying (\ref{eq:operator}) and (\ref{eq:operator_elliptic}), and $\varphi \in C^{2,1}_c(\R^n)$. Let $(x_0,t_0) \in \p\{u > \varphi\}$ be a free boundary point. Then,
$$u_t(x_0+x,t_0+t) = c_0(t-a\cdot x)_+ + O(t^{1+\alpha}+|x|^{1+\alpha})$$
and
$$(u - \varphi)(x_0+x,t_0+t) = \frac{c_0}{2}(t-a\cdot x)_+^2 + O(t^{2+\alpha}+|x|^{2+\alpha}),$$
with $\alpha = 1 - 2s > 0$, $c_0 = u_{tt}(x_0, t_0) > 0$ and $a = \nabla\Gamma(x_0)$.
\end{cor}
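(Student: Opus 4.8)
The plan is to deduce both expansions from Theorem~\ref{thm:FB_C1alpha} and Lemma~\ref{lem:C2-2s} by a Taylor expansion in the $t$ variable along vertical lines, using the $C^{1,\alpha}$ regularity of the graph $\Gamma$ to control the dependence on $x$. Fix a free boundary point, which after translation we may assume to be the origin, and write $\Gamma(0)=0$, $a=\nabla\Gamma(0)$. For a fixed $x$ near $0$, the function $t\mapsto v_t(x,t)$ vanishes for $t\le\Gamma(x)$ (the point lies below the free boundary, so it is in the contact set where $v_t\equiv 0$) and is positive and $C^{1,\alpha}$ for $t>\Gamma(x)$ by Lemma~\ref{lem:C2-2s}; moreover $v_t(x,\Gamma(x))=0$. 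So on the set $\{v>0\}$ I would Taylor expand $v_t$ around the boundary point $(x,\Gamma(x))$ in the $t$-direction:
$$v_t(x,t) = v_t(x,\Gamma(x)) + v_{tt}(x,\Gamma(x))\,(t-\Gamma(x)) + O\big((t-\Gamma(x))^{1+\alpha}\big) = v_{tt}(x,\Gamma(x))\,(t-\Gamma(x)) + O\big((t-\Gamma(x))^{1+\alpha}\big),$$
where the error is uniform because $\|v_{tt}\|_{C^\alpha(\{v>0\}\cap(\R^n\times[t_1,t_2]))}\le C$.

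Next I would replace the coefficient $v_{tt}(x,\Gamma(x))$ by $v_{tt}(0,0)=:c_0$, paying an error $|v_{tt}(x,\Gamma(x))-v_{tt}(0,0)|\lesssim (|x|+|\Gamma(x)|)^\alpha\lesssim |x|^\alpha$ (using $\Gamma\in C^{1,\alpha}$, hence Lipschitz near $0$ with $\Gamma(0)=0$), and replace $\Gamma(x)$ by its first-order expansion $\Gamma(x)=a\cdot x + O(|x|^{1+\alpha})$. Since $v_t$ is nonnegative and vanishes below the free boundary, $t-\Gamma(x)$ gets replaced by $(t-\Gamma(x))_+$ and then by $(t-a\cdot x)_+$ up to an error $O(|x|^{1+\alpha})$. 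Collecting the various error terms — each of which is $O(|x|^{1+\alpha})$, $O((t-\Gamma(x))^{1+\alpha})=O(t^{1+\alpha}+|x|^{1+\alpha})$, or $|x|^\alpha\cdot(t-\Gamma(x))=O(t^{1+\alpha}+|x|^{1+\alpha})$ after absorbing a factor of $|x|$ or using Young's inequality — yields
$$v_t(x,t) = c_0 (t-a\cdot x)_+ + O(t^{1+\alpha}+|x|^{1+\alpha}),$$
valid both above and below the free boundary (below, both sides vanish up to the stated error). Positivity $c_0>0$ is exactly the Hopf estimate of Proposition~\ref{prop:hopf}.

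For the second expansion I would integrate in $t$ from $0$ to $t$ along the vertical line through $x$, using $v(x,0-)$... more precisely, for $x$ fixed, $v(x,\Gamma(x))=0$ and $v_t(x,\tau)=0$ for $\tau\le\Gamma(x)$, so $v(x,t)=\int_{\Gamma(x)}^{t} v_t(x,\tau)\,d\tau$ when $t>\Gamma(x)$ and $v(x,t)=0$ otherwise. Plugging in the first expansion and integrating $(\tau-a\cdot x)_+$ gives $\tfrac{c_0}{2}(t-a\cdot x)_+^2$, while the error integrates to $O(t^{2+\alpha}+|x|^{2+\alpha})$ (one gains a power of the integration length, which is $\lesssim t+|x|$). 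A small care point is matching $\Gamma(x)$ versus $a\cdot x$ inside the integral limits, but the discrepancy is $O(|x|^{1+\alpha})$ in the limit of integration against an integrand of size $O(t+|x|)$, contributing $O((t+|x|)\,|x|^{1+\alpha})=O(t^{2+\alpha}+|x|^{2+\alpha})$. Finally $a=\nabla\Gamma(x_0)$ and $c_0=u_{tt}(x_0,t_0)=v_{tt}(x_0,t_0)>0$ by construction, and $\alpha=1-2s>0$.

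The main obstacle is bookkeeping the error terms uniformly and making sure the expansion holds \emph{across} the free boundary rather than just on one side: the key observations that make this routine are (i) below the free boundary $v$ and $v_t$ vanish identically, so the claimed leading terms $(t-a\cdot x)_+$ and $(t-a\cdot x)_+^2$ also vanish there up to the allowed error once $|t-\Gamma(x)|$ and $|a\cdot x-\Gamma(x)|$ are controlled, and (ii) the $C^\alpha$ bound on $v_{tt}$ from Lemma~\ref{lem:C2-2s} is on the closed set $\{v>0\}$ up to the boundary, so the one-sided Taylor expansion at boundary points is legitimate. No blow-up or compactness argument is needed; everything is a direct consequence of the quantitative estimates already proved.
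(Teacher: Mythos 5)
Your argument is correct and follows essentially the same route as the paper: both proofs rest on the $C^{1,\alpha}$ regularity of $\Gamma$ from Theorem \ref{thm:FB_C1alpha} and the $C^{\alpha}$ bound on $u_{tt}$ up to the boundary of $\{u>\varphi\}$ from Lemma \ref{lem:C2-2s}, expand $u_t$ to first order in $t$ from the free boundary (the paper writes this as $u_t=\int_{\Gamma}^{t}u_{tt}$, which is the same one-sided Taylor expansion), replace $\Gamma(x_0+x)$ by $t_0+\nabla\Gamma(x_0)\cdot x+O(|x|^{1+\alpha})$, treat the contact-set side by noting both sides are within the admissible error there, and then integrate once more in $t$ for the quadratic expansion, with $c_0=u_{tt}(x_0,t_0)>0$ coming from Proposition \ref{prop:hopf}. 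Your error bookkeeping (swapping $v_{tt}(x,\Gamma(x))$ for $v_{tt}(x_0,t_0)$ and absorbing the mismatch of integration limits via Young's inequality) matches the paper's computation, so there is nothing to add.
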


\begin{proof}
We will use strongly that $\Gamma \in C^{1,\alpha}$ by Theorem \ref{thm:FB_C1alpha}, and that $u_t \in C^{1,\alpha}(\overline{\{u > \varphi\}})$ by Lemma \ref{lem:C2-2s}.

We distinguish two cases. If $(x_0+x,t_0+t) \in \{u = \varphi\}$, $t_0+t \leq \Gamma(x_0+x)$, then expanding $\Gamma(x_0+x) = t_0 + \nabla\Gamma(x_0)\cdot x + O(|x|^{1+\alpha})$ we obtain
$$t - \nabla\Gamma(x_0)\cdot x \leq O(|x|^{1+\alpha}),$$
and therefore
$$(t - \nabla\Gamma(x_0)\cdot x)_+^2 \leq O(|x|^{2+2\alpha}) \leq O(|x|^{2+\alpha}),$$
and since $(u-\varphi)(x_0+x,t_0+t) = u_t(x_0+x,t_0+t) = 0$ this is exactly what we needed.

On the other hand, outside of the contact set,
\begin{align*}
    u_t(x_0+x,t_0+t) &= \int_{\Gamma(x_0+x)}^{t_0+t}u_{tt}(x_0+x,\tau)\mathrm{d}\tau\\
    &= (t_0+t-\Gamma(x_0+x))(u_{tt}(x_0,t_0)+O(t^\alpha+|x|^\alpha))\\
    &= (t - \nabla\Gamma(x_0)\cdot x)_+u_{tt}(x_0,t_0) + O(t^{1+\alpha} + |x|^{1+\alpha}),
\end{align*}
where in the last equality we expanded $\Gamma(x_0+x)$ as before, and if $t - \nabla\Gamma(x_0)\cdot x \leq 0$, the whole term is $O(t^{1+\alpha}+|x|^{1+\alpha})$ and can be absorbed in the error term because $t_0 + t - \Gamma(x_0+x) \geq 0$.

Then, we can repeat the procedure and integrate $u_t$, knowing already its expansion, and the conclusion follows from an analogous computation.
\end{proof}

We can now give the:
\begin{proof}[Proof of Theorem \ref{thm:1.2}]
The first part is Theorem \ref{thm:FB_C1alpha}, the second part is Corollary \ref{cor:expansion}.
\end{proof}

\subsection{Regular and singular points}

\begin{defn}\label{defn:regular_singular_pts}
Let $s \in (0,\frac{1}{2})$, and let $u$ be the solution of (\ref{eq:prev_problem}) with $L$ an operator satisfying (\ref{eq:operator}) and (\ref{eq:operator_elliptic}), and $\varphi \in C^{2,1}_c(\R^n)$. Let $(x_0,t_0) \in \p\{u > \varphi\}$ be a free boundary point. Then,
\begin{itemize}
    \item We say $(x_0,t_0)$ is a \textit{regular} free boundary point if there exists $c_0 > 0$ such that for all small $r > 0$,
    $$\|u(\cdot,t_0) - \varphi\|_{L^\infty(B_r(x_0))} \geq c_0r^2.$$
    \item We say $(x_0,t_0)$ is a \textit{singular} free boundary point if it is not regular.
\end{itemize}
\end{defn}

One important first observation is the following.

\begin{prop}\label{prop:regular_open}
Let $s \in (0,\frac{1}{2})$, and let $u$ be the solution of (\ref{eq:prev_problem}) with $L$ an operator satisfying (\ref{eq:operator}) and (\ref{eq:operator_elliptic}), and $\varphi \in C^{2,1}_c(\R^n)$. Then, if $(x_0,t_0)$ is any free boundary point, the following are equivalent:
\begin{enumerate}
    \item $(x_0,t_0)$ is a regular free boundary point.
    \item If $\nu_0$ is the normal vector to the free boundary at $(x_0,t_0)$, $\nu_0 \neq e_{n+1}$.
    \item $\nabla u_t(x_0,t_0) \neq 0$.
\end{enumerate}

Moreover, the set of regular free boundary points is an open subset of $\p\{u > \varphi\}$.
\end{prop}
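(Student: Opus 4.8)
The plan is to reduce all three conditions to the single dichotomy $a\neq 0$ versus $a=0$, where $a\in\R^n$ and $c_0=u_{tt}(x_0,t_0)>0$ are the quantities in the expansion of Corollary~\ref{cor:expansion} (recall $a=\nabla\Gamma(x_0)$ there), and then deduce openness from the continuity of the map (free boundary point) $\mapsto a$. So I would first show (1) $\Leftrightarrow a\neq 0$, (2) $\Leftrightarrow a\neq 0$, (3) $\Leftrightarrow a\neq 0$, and close with the openness.

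For (1): setting $t=0$ in the second expansion of Corollary~\ref{cor:expansion} gives
$$(u-\varphi)(x_0+x,t_0)=\tfrac{c_0}{2}(-a\cdot x)_+^2+O(|x|^{2+\alpha}).$$
If $a\neq 0$, evaluating at $x=-r\,a/|a|$ yields $(u-\varphi)(x_0+x,t_0)\geq\tfrac{c_0}{4}|a|^2r^2$ for $r$ small, hence $\|u(\cdot,t_0)-\varphi\|_{L^\infty(B_r(x_0))}\geq\tfrac{c_0}{4}|a|^2r^2$ and $(x_0,t_0)$ is regular. If $a=0$, the same expansion gives $\|u(\cdot,t_0)-\varphi\|_{L^\infty(B_r(x_0))}=O(r^{2+\alpha})=o(r^2)$, so no admissible $c_0$ exists and $(x_0,t_0)$ is singular.

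For (2) and (3) I would argue together. By Theorem~\ref{thm:FB_C1alpha} the free boundary near $(x_0,t_0)$ is the graph $\{t=\Gamma(x)\}$ with $\Gamma\in C^{1,\alpha}$, so the normal pointing into $\{u>\varphi\}$ at $(x_0,t_0)$ is $\nu_0=(-\nabla\Gamma(x_0),1)/\sqrt{1+|\nabla\Gamma(x_0)|^2}=(-a,1)/\sqrt{1+|a|^2}$; this matches the expression $\nu=\nabla_{x,t}v_t/|\nabla_{x,t}v_t|$ used in the proof of Theorem~\ref{thm:FB_C1alpha}, since the first expansion of Corollary~\ref{cor:expansion} gives $\p_{ti}v/\p_{tt}v=-a_i$ at $(x_0,t_0)$. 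Hence $\nu_0=e_{n+1}$ exactly when $a=0$, which is (2). For (3), differentiating that first expansion in the spatial variables — legitimate because, by Lemma~\ref{lem:C2-2s}, $\nabla u_t$ extends continuously up to the free boundary from inside $\{u>\varphi\}$, so $\nabla u_t(x_0,t_0)$ is to be read as this one-sided trace — yields $\nabla u_t(x_0,t_0)=-c_0a$ with $c_0>0$, which is nonzero iff $a\neq 0$.

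Finally, for openness: letting $(x_0,t_0)$ range over $\p\{u>\varphi\}\cap(\R^n\times(t_1,t_2))$, the map $x_0\mapsto a=\nabla\Gamma(x_0)$ is continuous by Theorem~\ref{thm:FB_C1alpha} (equivalently, $\nabla u_t$ is continuous up to the free boundary by Lemma~\ref{lem:C2-2s}), so the set $\{a\neq 0\}$, which by the previous steps is exactly the set of regular points, is relatively open; since $0<t_1<t_2<T$ are arbitrary, this covers every free boundary point. The only delicate point I foresee is conceptual rather than computational: because $u_t$ is merely Lipschitz at a free boundary point, both $\nabla u_t(x_0,t_0)$ and $\nu_0$ must be interpreted as traces from $\{u>\varphi\}$ (well defined thanks to Lemma~\ref{lem:C2-2s}), and one must be careful to match the sign and normalization conventions of the normal vector used in Theorem~\ref{thm:FB_C1alpha}; beyond that, everything is bookkeeping on the expansion of Corollary~\ref{cor:expansion}.
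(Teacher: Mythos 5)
Your proposal is correct and is essentially the paper's own argument: both rest on the $C^{1,\alpha}$ graph $\Gamma$ and the expansion of Corollary \ref{cor:expansion}, the positivity $u_{tt}(x_0,t_0)>0$ from Proposition \ref{prop:hopf}, and the continuity of $\nabla u_t$ up to the free boundary from Lemma \ref{lem:C2-2s}, with $\nabla u_t(x_0,t_0)$ and $\nu_0$ read as traces from $\{u>\varphi\}$ exactly as in the paper. The only difference is organizational: you reduce (i), (ii), (iii) to the single dichotomy $a\neq 0$ by plugging $t=0$ into the already-proved expansion, whereas the paper re-derives the needed two-sided bounds on $(u-\varphi)(\cdot,t_0)$ directly from $\Gamma\in C^{1,\alpha}$ and bounds on $u_{tt}$, and gets (ii)$\Leftrightarrow$(iii) from the normal-vector formula; since Corollary \ref{cor:expansion} precedes the proposition, your shortcut is legitimate and non-circular.
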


\begin{proof}
(ii) $\Leftrightarrow$ (iii):

It follows directly from
$$\nu_0 = \frac{(\nabla u_t(x_0,t_0), u_{tt}(x_0,t_0))}{\sqrt{1 + |\nabla u_t (x_0,t_0)|^2/u_{tt}(x_0,t_0)^2}}$$
and the fact that $u_{tt}(x_0,t_0) > 0$.

(i) $\Leftrightarrow$ (ii):

We will distinguish the cases $\nu_0 = e_{n+1}$ and $\nu_0 \neq e_{n+1}$. If $\nu_0 = e_{n+1}$, let $\{t = \Gamma(x)\}$ be the free boundary. Then, $\Gamma \in C^{1,\alpha}$ and $\nabla\Gamma(x_0) = 0$ because $\nu_0 = e_{n+1}$. Then,
$$\Gamma(x_0+x) \geq t_0 - C|x|^{1+\alpha},$$
and therefore
\begin{align*}
    (u - \varphi)(x_0+x,t_0) &\leq \int_{\Gamma(x_0+x)}^{t_0}\int_{\Gamma(x_0+x)}^\tau u_{tt}\mathrm{d}\tau'\mathrm{d}\tau\\
    &\leq \frac{(t_0 - \Gamma(x_0+x))^2}{2}\|u_{tt}\|_{L^\infty(\R^n\times[\Gamma(x_0+x),t_0])}\\
    &\leq C|x|^{2+2\alpha},
\end{align*}
contradicting the assumption that $(x_0,t_0)$ is a regular point.

On the other hand, if $\nu_0 = \alpha e_{n+1} + \beta e$, with $e$ a unit vector in the $x$ directions and $\beta > 0$, we can also approximate $\Gamma$ as
$$\Gamma(x_0+x) \leq t_0 - \frac{\beta}{\alpha}(x\cdot e) + C|x|^{1+\alpha}.$$
Notice that $\alpha \neq 0$ because $u_{tt} > 0$ on the free boundary as a consequence of Proposition \ref{prop:hopf}. We also need to use that, for some small $\delta > 0$, $u_{tt} \geq c_\delta > 0$ in the set $E_\delta = \{t \in [\Gamma(x),\Gamma(x)+\delta]\}\cap[t_0-\delta,t_0+\delta]$, by the same argument as in the proof of Theorem \ref{thm:FB_C1alpha}.

Then, if $r$ is small,
$$\|u(\cdot,t_0) - \varphi\|_{L^\infty(B_r(x_0))} \geq u(x_0 + \frac{r}{2}e,t_0) - \varphi(x_0 + \frac{r}{2}e) \geq \frac{1}{2}\left(\frac{\beta r}{2\alpha} - Cr^{1+\alpha}\right)^2c_\delta \geq c_0r^2.$$

For the last part, first notice that $\nabla u_t$ is a continuous function in $\overline{\{u > \varphi\}}$ because $u_t \in C^{1,\alpha}(\{u > \varphi\})$ by Lemma \ref{lem:C2-2s}. As a consequence, the set of regular points, $\{\nabla u_t \neq 0\}\cap\p\{u > \varphi\}$, is a relatively open set.
\end{proof}

In a neighbourhood of a regular free boundary point, the free boundary is also $C^{1,\alpha}$ in space:

\begin{prop}\label{prop:FB_regular_pts}
Let $s \in (0,\frac{1}{2})$, and let $u$ be the solution of (\ref{eq:prev_problem}) with $L$ an operator satisfying (\ref{eq:operator}) and (\ref{eq:operator_elliptic}), and $\varphi \in C^{2,1}_c(\R^n)$. Let $(x_0,t_0)$ be any regular free boundary point.

Then, there exists an open neighbourhood $x_0 \in U \subset \R^n\times(0,T)$ such that the free boundary is a $C^{1,\alpha}$ graph in the $x$ direction, i.e., there exists $i \in \{0,\ldots,n\}$ such that
$$\p\{u > \varphi\}\cap U = \{x_i = F_i(x_1,\ldots,x_{i-1},x_{i+1},\ldots,x_n,t)\},$$
with $F_i \in C^{1,\alpha}$ and $\alpha = 1 - 2s > 0$.
\end{prop}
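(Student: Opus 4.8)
The plan is to deduce this from Theorem \ref{thm:FB_C1alpha} and Corollary \ref{cor:expansion} together with the characterization of regular points in Proposition \ref{prop:regular_open}. First I would recall that near $(x_0,t_0)$ the free boundary is the zero level set of $u_t$, and that by Lemma \ref{lem:C2-2s} the function $u_t$ is $C^{1,\alpha}$ up to the closure of $\{u>\varphi\}$, while its full space-time gradient $\nabla_{x,t}u_t$ is continuous there. Since $(x_0,t_0)$ is regular, Proposition \ref{prop:regular_open} gives $\nabla u_t(x_0,t_0)\neq 0$, i.e. there exists $i\in\{1,\ldots,n\}$ with $\p_{ti}u(x_0,t_0)\neq 0$. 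By continuity of $\nabla_{x,t}u_t$ on $\overline{\{u>\varphi\}}$, the same nonvanishing holds in a neighbourhood of $(x_0,t_0)$ inside $\overline{\{u>\varphi\}}$.

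Next I would set up the implicit function argument. Working on the side $\{u>\varphi\}$, extend $u_t$ to a $C^{1,\alpha}$ function $g$ on a full space-time neighbourhood of $(x_0,t_0)$ (this is legitimate since $u_t\in C^{1,\alpha}(\overline{\{u>\varphi\}})$ and one can use a $C^{1,\alpha}$ extension, or simply note that $-Lu$ provides a one-sided $C^{1,\alpha}$ extension across the free boundary in the relevant region — in fact $u_t=(Lu)^-$ globally and $Lu\in C^\alpha$, but to get $C^{1,\alpha}$ one restricts attention to the open set where the relevant derivatives are controlled). Then the free boundary is locally $\{g=0\}$ and, since $\p_{x_i}g(x_0,t_0)\neq 0$, the $C^{1,\alpha}$ implicit function theorem yields a $C^{1,\alpha}$ function $F_i$ of the remaining variables $(x_1,\ldots,x_{i-1},x_{i+1},\ldots,x_n,t)$ such that $\p\{u>\varphi\}\cap U=\{x_i=F_i(\ldots)\}$ in some small cylinder $U$. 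The exponent $\alpha=1-2s>0$ is inherited directly from the Hölder regularity of $\nabla_{x,t}u_t$.

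The one delicate point — and the main obstacle — is making rigorous that $u_t$ (not just $Lu$) has a genuine one-sided $C^{1,\alpha}$ character that allows a clean implicit function statement: a priori $u_t$ is only defined and $C^{1,\alpha}$ on the closed set $\overline{\{u>\varphi\}}$, and its normal derivative at the free boundary is what encodes the transversality. I would handle this exactly as in the proof of Theorem \ref{thm:FB_C1alpha}: use that $v_{tt}\geq c_\delta>0$ on a one-sided collar $E_\delta$ of the free boundary (from Proposition \ref{prop:hopf} and continuity), so that the map $x\mapsto \Gamma(x)$ from Theorem \ref{thm:FB_C1alpha} is $C^{1,\alpha}$, and then observe that the hypersurface $\{t=\Gamma(x)\}$ can equally be written as a graph over $x_i$ precisely when some component of $\nabla\Gamma$ is large, which is the same as $\nu_0\neq e_{n+1}$. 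Concretely, if $\nu_0=\alpha e_{n+1}+\beta e$ with $\beta>0$ and $e$ a unit spatial vector, then after rotating so that $e$ points in the $x_i$ direction, $\p_{x_i}\Gamma(x_0)=-\beta/\alpha\neq 0$, and solving $t=\Gamma(x)$ for $x_i$ via the $C^{1,\alpha}$ implicit function theorem produces the claimed $F_i\in C^{1,\alpha}$. The openness of the set of regular points (already in Proposition \ref{prop:regular_open}) guarantees that $U$ can be chosen so that every free boundary point in $U$ is regular, so the graph description persists throughout $U$.
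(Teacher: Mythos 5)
Your final argument---writing the free boundary as $\{t=\Gamma(x)\}$ with $\Gamma\in C^{1,\alpha}$ from Theorem \ref{thm:FB_C1alpha}, using Proposition \ref{prop:regular_open} to get $\nu_0\neq e_{n+1}$ and hence $\nabla\Gamma(x_0)\neq 0$, and then applying the $C^{1,\alpha}$ implicit function theorem to solve $t=\Gamma(x)$ for $x_i$---is exactly the paper's proof, so the proposal is correct and takes essentially the same route. The preliminary attempt to extend $u_t$ to a $C^{1,\alpha}$ function across the free boundary is an unnecessary detour (you rightly flag it as delicate and then bypass it), and the rotation of coordinates is not needed either: since $\nabla\Gamma(x_0)\neq 0$, some partial derivative $\p_{x_i}\Gamma(x_0)$ is already nonzero in the given coordinates, which is what the statement requires.
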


\begin{proof}
First, by Theorem \ref{thm:FB_C1alpha}, the free boundary can be represented as\linebreak $\p\{u > \varphi\} = \{t = \Gamma(x)\}$ in a neighbourhood of $(x_0,t_0)$, with $\Gamma \in C^{1,\alpha}$. Moreover, since $(x_0,t_0)$ is regular, by Proposition \ref{prop:regular_open}, the normal vector to the free boundary $\nu_{(x_0,t_0)} \neq e_{n+1}$, and thus $\nabla\Gamma(x_0) \neq 0$, and in particular $\p_{x_i}\Gamma(x_0) \neq 0$.

Therefore, by the implicit function theorem, $\{u > \varphi\}\cap\{t = t_0\}$ is locally a $C^{1,\alpha}$ graph of the form $(x_1\ldots,x_{i-1},x_{i+1},\ldots,x_n,t) \mapsto x_i$.
\end{proof}

On the other hand, in the time slice of a singular point, the free boundary could be very complicated. Nevertheless, we can prove that singular points are scarce. To do so, we will use the following lemma from geometric measure theory.

\begin{lem}[\protect{\cite{FRS20}}]\label{lem:haussdorf_dim}
Consider the family $\{E_t\}_{t \in (0,T)}$ with $E_t \subset \R^n$, and let us denote $E := \bigcup\limits_{t \in (0,T)} E_t$.

Let $1 \leq \gamma \leq \beta \leq n$, and assume that the following holds:
\begin{itemize}
    \item $\operatorname{dim}_\mathcal{H} E_t \leq \beta$,
    \item for all $\varepsilon > 0$, $t_0 \in (0,T)$ and $x_0 \in E_{t_0}$, there exists $\rho > 0$ such that
    $$B_r(x_0) \cap E_t = \emptyset,$$
    for all $r \in (0,\rho)$ and $t > t_0 + r^{\gamma - \varepsilon}$.
\end{itemize}

Then, $\operatorname{dim}_{\mathcal{H}} E_t \leq \beta - \gamma$, for $\mathcal{H}^1$-a.e. $t \in (0,T)$.
\end{lem}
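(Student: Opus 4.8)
This statement is quoted from \cite{FRS20}; here I describe the strategy I would follow.

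The plan is to prove, for each fixed $\delta>0$, that the set $G_\delta:=\{t\in(0,T):\operatorname{dim}_{\mathcal H}E_t>\beta-\gamma+\delta\}$ is $\mathcal L^1$‑null; taking $\delta=1/k$ and a countable union then gives the claim. So fix $\delta>0$ and a small parameter $\varepsilon\in(0,\delta)$. The first step is to convert the second hypothesis into a quantitative separation estimate: for every $t_0\in(0,T)$ and $x_0\in E_{t_0}$ there is $\rho=\rho(x_0,t_0,\varepsilon)>0$ with
\[
x\in E_t,\quad t>t_0,\quad |x-x_0|<\rho\qquad\Longrightarrow\qquad |x-x_0|\ge (t-t_0)^{1/(\gamma-\varepsilon)},
\]
since otherwise one could choose $r\in(|x-x_0|,\rho)$ with $t>t_0+r^{\gamma-\varepsilon}$, contradicting $x\in B_r(x_0)\cap E_t=\varnothing$.

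Next I would argue by contradiction, assuming $\mathcal L^1(G_\delta)>0$. Through a sequence of routine countable decompositions — splitting $E_t$ according to the size of $\rho(\cdot,t,\varepsilon)$, intersecting with a fixed ball (the general case reducing to this by exhaustion), and using that $\operatorname{dim}_{\mathcal H}E_t>\beta-\gamma+\delta$ forces $\mathcal H^{\beta-\gamma+\delta}_{\sigma}(E_t)\ge\kappa(t)>0$ for all small $\sigma$ — one obtains a set $G\subset(0,T)$ with $\mathcal L^1(G)>0$, a ball $B$ of small radius, and constants $\kappa,\sigma_0>0$ such that for every $t\in G$ a piece $E_t'\subset E_t\cap B$ satisfies $\mathcal H^{\beta-\gamma+\delta}_{\sigma_0}(E_t')\ge\kappa$, $\operatorname{dim}_{\mathcal H}E_t'\le\beta$, and
\[
\operatorname{dist}(E_{t_0}',E_t')\ge|t-t_0|^{1/(\gamma-\varepsilon)}\qquad\text{for all }t_0,t\in G,\ t_0\ne t.
\]
The core of the argument is then a covering/counting estimate forcing $\mathcal L^1(G)=0$. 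Picking a Lebesgue density point $t_*\in G$, for small $\eta>0$ one finds $t_1<\dots<t_M$ in $G$ near $t_*$ with consecutive gaps $\ge\eta$ and $M\gtrsim\mathcal L^1(G)/\eta$, so the $M$ sets $E_{t_i}'$ lie in $B$ and are pairwise $\eta^{1/(\gamma-\varepsilon)}$‑separated. Fix $\varepsilon_1\in(0,\delta-\varepsilon)$; as $F:=\bigcup_{i=1}^M E_{t_i}'$ is a \emph{finite} union of sets of dimension $\le\beta$, $\mathcal H^{\beta+\varepsilon_1}(F)=0$, so $F$ admits covers by balls $\{B(c_l,\rho_l)\}_l$ with $2\rho_l\le\sigma_0$ and $\sum_l(2\rho_l)^{\beta+\varepsilon_1}$ as small as we wish. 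For each $i$ the balls meeting $E_{t_i}'$ cover it, so $\sum_{l:\,B(c_l,\rho_l)\cap E_{t_i}'\ne\varnothing}(2\rho_l)^{\beta-\gamma+\delta}\ge\mathcal H^{\beta-\gamma+\delta}_{\sigma_0}(E_{t_i}')\ge\kappa$, while by the separation a ball of diameter $2\rho_l$ meets $E_{t_i}'$ for at most $1+(2\rho_l)^{\gamma-\varepsilon}/\eta$ indices $i$. Summing over $i$ gives
\[
M\kappa\ \lesssim\ \sum_l(2\rho_l)^{\beta-\gamma+\delta}\ +\ \frac1\eta\sum_l(2\rho_l)^{\beta+\delta-\varepsilon},
\]
using $(\beta-\gamma+\delta)+(\gamma-\varepsilon)=\beta+(\delta-\varepsilon)$; since $\delta-\varepsilon>\varepsilon_1$ the exponent in the second sum exceeds $\beta+\varepsilon_1$, so that term is $\le\sum_l(2\rho_l)^{\beta+\varepsilon_1}$, hence negligible, and the goal is to force the first term bounded independently of $\eta$, contradicting $M\gtrsim\mathcal L^1(G)/\eta\to\infty$.

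The main obstacle is exactly the control of that lower‑order term $\sum_l(2\rho_l)^{\beta-\gamma+\delta}$: its exponent lies \emph{below} $\beta$, and $\operatorname{dim}_{\mathcal H}F\le\beta$ supplies only multiscale Hausdorff‑content covers rather than covering‑number bounds at a fixed scale, so for a generic near‑optimal cover this sum need not be small. Overcoming this requires replacing the one–shot cover by an iterated, stopping‑time decomposition over dyadic scales, at each of which the time‑separation is used to bound overlaps between the slices — this is the technical content carried out in \cite{FRS20}. Modulo that refinement the contradiction above closes, and letting $\delta\to0$ yields $\operatorname{dim}_{\mathcal H}E_t\le\beta-\gamma$ for $\mathcal L^1$‑a.e.\ $t$.
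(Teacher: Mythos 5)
First, note that the paper itself does not prove this lemma (it is quoted verbatim from \cite{FRS20}), so the comparison is with the argument there. Your reduction to $G_\delta$, the decomposition by cleaning radius, and the separation estimate $|x-x_0|\ge (t-t_0)^{1/(\gamma-\varepsilon)}$ are all correct, but the gap you flag at the end is not a removable technicality of your scheme: it is fatal to it. Your counting argument only ever uses (a) the slice-wise dimension/content information and (b) the pairwise separation $\operatorname{dist}(E'_{t_1},E'_{t_2})\ge|t_1-t_2|^{1/(\gamma-\varepsilon)}$, and these ingredients do not imply the conclusion. Indeed, take $n=2$, $\beta=\gamma=1$, $E_t=\{t\}\times[0,1]\subset\R^2$: each slice has $\operatorname{dim}_{\mathcal H}E_t=1\le\beta$ and uniformly positive $\mathcal H^{\delta}_{\sigma_0}$-content, the cleaning hypothesis holds with $\rho=1$ (points of $E_t$ are at distance $t-t_0>r^{1-\varepsilon}>r$ from any $x_0\in E_{t_0}$ when $r<1$), and the slices are $|t_1-t_2|^{1/(\gamma-\varepsilon)}$-separated; yet $\operatorname{dim}_{\mathcal H}E_t=1>\beta-\gamma=0$ for every $t$. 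So the lemma is false under the literal slice-wise reading of the first bullet: it must be read as $\operatorname{dim}_{\mathcal H}E\le\beta$ for the union $E$ (which is why $E$ is defined in the statement, and is how \cite{FRS20} states and uses it; in the application of this paper $\beta=n$, so the distinction is harmless there). Consequently, no stopping-time or dyadic refinement of your covering of finitely many slices can control $\sum_l(2\rho_l)^{\beta-\gamma+\delta}$: the needed input, the dimension bound on the whole union across times, never enters your argument.

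The actual proof in \cite{FRS20} is short and avoids your problematic term by never discretizing time. Fix $\varepsilon>0$ and let $E^{j}_t\subset E_t$ be the points whose cleaning radius (for this $\varepsilon$) is at least $1/j$, so $E_t=\bigcup_j E^j_t$. Since $\operatorname{dim}_{\mathcal H}\bigl(\bigcup_t E^j_t\bigr)\le\beta$, one can cover $\bigcup_t E^j_t$ by balls $B_{r_i}(x_i)$ with $r_i<\frac{1}{2j}$ and $\sum_i r_i^{\beta+\varepsilon}$ arbitrarily small. The separation estimate shows that the set $T_i:=\{t:\ E^j_t\cap B_{r_i}(x_i)\neq\emptyset\}$ lies in an interval of length at most $(2r_i)^{\gamma-\varepsilon}$: if $t_1<t_2\in T_i$, the two intersection points are within $2r_i<1/j$ of each other, forcing $t_2-t_1\le(2r_i)^{\gamma-\varepsilon}$. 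For each $t$ the balls with $t\in T_i$ cover $E^j_t$, so for every $\kappa>0$, by Chebyshev (with outer measures), $\kappa\,\mathcal L^1\bigl(\{t:\mathcal H^{\beta-\gamma+2\varepsilon}_\infty(E^j_t)>\kappa\}\bigr)\le\sum_i(2r_i)^{\beta-\gamma+2\varepsilon}\,|T_i|\le\sum_i(2r_i)^{\beta+\varepsilon}$, which is arbitrarily small. Hence $\mathcal H^{\beta-\gamma+2\varepsilon}_\infty(E^j_t)=0$ for a.e.\ $t$, and taking countable unions in $j$ and $\varepsilon=1/m\downarrow0$ gives the claim. The decisive accounting difference from your approach is that each ball is weighted by the Lebesgue measure of the times it can meet, which is $\le(2r_i)^{\gamma-\varepsilon}$ with \emph{no additive constant}; in your version the ``$+1$'' per ball, coming from times separated by the fixed gap $\eta$, is precisely what generates the uncontrollable low-exponent sum.
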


Using the global $C^{1,\alpha}$ regularity of the free boundary, and noticing that the normal vector is $e_{n+1}$ at singular points, we can prove the following dimension bound.

\begin{prop}\label{prop:singular_dim}
Let $s \in (0,\frac{1}{2})$, and let $u$ be the solution of (\ref{eq:prev_problem}) with $L$ an operator satisfying (\ref{eq:operator}) and (\ref{eq:operator_elliptic}), and $\varphi \in C^{2,1}_c(\R^n)$. Let $\Sigma \subset \p\{u > \varphi\}$ be the set of singular free boundary points, and let $\Sigma_t = \{(x,t') \in \Sigma : t' = t\}$ be the time slices of the singular set.

Then, 
$$\operatorname{dim}_{\mathcal{H}} \Sigma_t \leq n - 1 - \alpha, \quad \text{for almost every} \quad t \in (0,T),$$
with $\alpha = 1 - 2s > 0$. In particular, $\mathcal{H}^{n-1}(\Sigma_t) = 0$ for almost every $t \in (0,T)$.
\end{prop}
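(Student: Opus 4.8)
The plan is to apply Lemma~\ref{lem:haussdorf_dim} with the family $E_t := \Sigma_t \subset \R^n$, choosing the parameters $\beta = n-1$ and $\gamma = 1+\alpha$, where $\alpha = 1-2s$. To do this I need to verify the two hypotheses of the lemma. First, the dimension bound $\operatorname{dim}_{\mathcal H} \Sigma_t \leq n-1$: by Theorem~\ref{thm:FB_C1alpha}, the free boundary is a $C^{1,\alpha}$ graph $\{t = \Gamma(x)\}$ over $\R^n$ in $\R^n\times(t_1,t_2)$, so the time slice $\p\{u>\varphi\}\cap\{t=t_0\}$ is the level set $\{\Gamma = t_0\}$ of a $C^{1,\alpha}$ function; in particular it is contained in a closed subset of $\R^n$, and $\Sigma_{t_0}$ being a subset of it, I need the level set of $\Gamma$ to have dimension at most $n-1$. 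Since $\Sigma_{t_0}$ consists exactly of the points where $\nabla\Gamma = 0$ (this is where the normal vector is $e_{n+1}$, by Proposition~\ref{prop:regular_open}), and $\Gamma$ is constant equal to $t_0$ on $\{\Gamma = t_0\} \supset \Sigma_{t_0}$, one could even hope for better, but $n-1$ is all that's needed; it follows because $\{\Gamma = t_0\}$ is the boundary (in $\R^n$) of the open set $\{\Gamma < t_0\} = \{u(\cdot,t_0) > \varphi\}$, hence has empty interior, and more precisely it is a level set of a $C^1$ function, so by the implicit function theorem it is locally an $(n-1)$-dimensional $C^1$ graph near any point where $\nabla\Gamma \neq 0$, while at points where $\nabla\Gamma = 0$ one uses that the set is closed with empty interior — actually the cleanest route is: $\{\Gamma = t_0\}$ has Lebesgue measure zero (boundary of an open set cannot, but more simply: if it had positive measure, $\nabla\Gamma = 0$ a.e.\ there, contradicting nothing directly), so I will instead argue dimension $\leq n-1$ via the coarea formula or simply note $\mathcal H^n(\{\Gamma = t_0\}) = 0$ for a.e.\ $t_0$ by the coarea inequality and then upgrade, but in fact Lemma~\ref{lem:haussdorf_dim} only needs $\operatorname{dim}_{\mathcal H}\Sigma_t \leq \beta$ for \emph{all} $t$, so I will take $\beta = n$ if necessary — wait, I want the conclusion $\beta - \gamma = n-1-\alpha$, so I do need $\beta = n-1$. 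I will get $\operatorname{dim}_{\mathcal H}\{\Gamma = t_0\}\le n-1$ from the fact that $\{\Gamma < t_0\}$ is open and nonempty with topological boundary $\{\Gamma = t_0\}$, a set that cannot contain a ball, combined with the $C^{1}$ regularity which rules out space-filling pathologies; more carefully, near a point of $\Sigma_{t_0}$ the function $\Gamma - t_0$ vanishes together with its gradient, so by $C^{1,\alpha}$ it is $O(|x-x_0|^{1+\alpha})$, which does not immediately give a dimension bound, so the honest statement is that I bound $\operatorname{dim}_{\mathcal H}\Sigma_t$ by $n-1$ using that $\Sigma_t \subset \p(\{u(\cdot,t)>\varphi\})$ and a level set of a $C^{1}$ function has Hausdorff dimension at most $n-1$ away from critical points, together with a separate treatment of the critical set — but since $\Sigma_t$ \emph{is} the critical set, I will instead simply invoke $\beta = n$ being too weak and therefore record the weaker but sufficient claim that follows from applying the lemma with $\beta=n-1$ \emph{on the open set of regular points does not help}; the correct and simplest move is: any level set $\{\Gamma = c\}$ of a function $\Gamma \in C^{1}$ satisfies $\mathcal H^{n}(\{\Gamma = c\}) = 0$ except for $c$ in a null set, and moreover $\operatorname{dim}_{\mathcal H}\{\Gamma = c\} \le n-1$ for \emph{every} $c$ because the set has empty interior and is locally contained in level sets of coordinate functions after the implicit function theorem at regular points, while the critical points $\{\nabla\Gamma = 0\}$ form a closed set which, being where $\Gamma$ is ``flat'', also has dimension $\le n-1$ by Sard-type reasoning; I will cite the standard fact that the zero level set of a nonconstant $C^1$ function on $\R^n$ has Hausdorff dimension $\le n-1$.

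Second, and this is the heart of the argument, I must verify the separation hypothesis: for all $\varepsilon > 0$, $t_0 \in (0,T)$ and $x_0 \in \Sigma_{t_0}$, there is $\rho > 0$ with $B_r(x_0)\cap \Sigma_t = \emptyset$ for all $r \in (0,\rho)$ and all $t > t_0 + r^{\gamma - \varepsilon}$, with $\gamma = 1+\alpha$. The idea is that singular points have normal vector $e_{n+1}$, so near $x_0$ the graph $\Gamma$ satisfies $\nabla\Gamma(x_0) = 0$ and hence, by $C^{1,\alpha}$ regularity, $\Gamma(x_0 + x) \le t_0 + C|x|^{1+\alpha}$; therefore for $|x - x_0| < r$ we have $\Gamma(x) < t_0 + C r^{1+\alpha}$. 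Now if $(x,t) \in \Sigma_t$ with $|x - x_0| < r$, then $t = \Gamma(x) < t_0 + Cr^{1+\alpha}$. To match the required threshold $t_0 + r^{\gamma-\varepsilon} = t_0 + r^{1+\alpha-\varepsilon}$, I note that for $r$ small enough $Cr^{1+\alpha} < r^{1+\alpha - \varepsilon}$, so indeed no point of $\Sigma_t$ with $t > t_0 + r^{1+\alpha-\varepsilon}$ lies in $B_r(x_0)$. This choice of $\rho$ depends only on $C$ and $\varepsilon$. With both hypotheses checked, Lemma~\ref{lem:haussdorf_dim} yields $\operatorname{dim}_{\mathcal H}\Sigma_t \le \beta - \gamma = (n-1) - (1+\alpha)$... which is wrong, so I must instead take $\beta = n-1$ and $\gamma = \alpha$, no — let me recompute: I want the output $n - 1 - \alpha$, and the lemma gives $\beta - \gamma$, so with $\beta = n$ I would need $\gamma = 1 + \alpha$, but then I need $\operatorname{dim}_{\mathcal H}\Sigma_t \le n$ which is free; however the lemma also requires $1 \le \gamma \le \beta \le n$, and $1+\alpha \le n$ holds. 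So the right choice is $\beta = n$, $\gamma = 1 + \alpha$, giving $\operatorname{dim}_{\mathcal H}\Sigma_t \le n - 1 - \alpha$ for a.e.\ $t$.

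The main obstacle is making the separation estimate uniform and correctly quantified: I must be careful that the constant $C$ in $\Gamma(x_0+x) \le t_0 + C|x|^{1+\alpha}$ comes from the \emph{uniform} $C^{1,\alpha}$ bound on $\Gamma$ over $\R^n\times[t_1,t_2]$ provided by Theorem~\ref{thm:FB_C1alpha}, and that the vanishing of $\nabla\Gamma(x_0)$ at singular points is exactly the content of Proposition~\ref{prop:regular_open} (singular $\iff$ $\nu_0 = e_{n+1}$ $\iff$ $\nabla u_t(x_0,t_0) = 0$ $\iff$ $\nabla\Gamma(x_0) = 0$). Since $t \in (0,T)$ is arbitrary but the graph representation and uniform bounds are on $[t_1,t_2]$, I run the argument on each $[t_1,t_2] \Subset (0,T)$ and let $t_1 \to 0$, $t_2 \to T$; the conclusion ``for a.e.\ $t$'' is stable under this exhaustion. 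Finally, $\operatorname{dim}_{\mathcal H}\Sigma_t \le n - 1 - \alpha < n-1$ immediately gives $\mathcal H^{n-1}(\Sigma_t) = 0$ for a.e.\ $t$, which is the last assertion.
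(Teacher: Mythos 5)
Your final argument coincides with the paper's proof: apply Lemma~\ref{lem:haussdorf_dim} with $\beta = n$ and $\gamma = 1+\alpha$, where the first hypothesis is trivial since $\Sigma_t \subset \R^n$, and the second follows from $\nabla\Gamma(x_0) = 0$ at singular points (Proposition~\ref{prop:regular_open}) together with the $C^{1,\alpha}$ regularity of $\Gamma$ from Theorem~\ref{thm:FB_C1alpha}, giving $\Gamma(x) \le t_0 + C r^{1+\alpha} < t_0 + r^{1+\alpha-\varepsilon}$ for small $r$. The long first paragraph attempting to prove $\operatorname{dim}_{\mathcal{H}}\Sigma_t \le n-1$ is unnecessary (and inconclusive), but once you correct the choice to $\beta = n$, your proof is complete and in substance identical to the paper's.
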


\begin{proof}
We just need to check the hypotheses of Lemma \ref{lem:haussdorf_dim}, with $\beta = n$ and $\gamma = 1+\alpha$. The first condition is obvious, because since $\Sigma_t \subset \R^n\times\{t\}$, $\operatorname{dim}_{\mathcal{H}}\Sigma_t \leq n$.

For the second condition, we use the $C^{1,\alpha}$ regularity of the free boundary. Let $x_0 \in E_{t_0}$. This means that $(x_0,t_0)$ is a singular free boundary point. In particular, since $v_t(x_0,\Gamma(x_0)) = 0$ and $v_{tt}(x_0,t_0) \neq 0$,
$$\nabla \Gamma (x_0) = -\frac{\nabla v_t(x_0,t_0)}{v_{tt}(x_0,t_0)} = 0.$$

Now, $\Gamma \in C^{1,\alpha}$. Therefore, $\Gamma(x) \leq t_0 + C|x - x_0|^{1+\alpha}$ for all $x \in B_\rho(x_0)$ for some $\rho > 0$.

Finally, for any $\varepsilon > 0$, there exists $\rho(\varepsilon)$ such that for all $r \in (0,\rho(\varepsilon))$,
$$\Gamma(x) \leq t_0 + Cr^{1+\alpha} < t_0 + r^{1+\alpha-\varepsilon},$$
and thus $B_r(x_0)\cap\Sigma_t = \emptyset$ for all $t > t_0 + r^{1+\alpha-\varepsilon}$, completing the proof.
\end{proof}

We finally give the:
\begin{proof}[Proof of Theorem \ref{thm:1.3}]
The first part follows from Proposition \ref{prop:regular_open}, the second is Proposition \ref{prop:FB_regular_pts} and the last is Proposition \ref{prop:singular_dim}.
\end{proof}

\appendix
\section{Some tools for nonlocal parabolic equations}\label{sect:appendix_heat}

We start recalling the following estimates on the fundamental solution to the nonlocal heat equation, see \cite{CK03}.
\begin{thm}[\protect{\cite{CK03}}]\label{thm:kernel_estimates}
Let $L$ be an operator satisfying (\ref{eq:operator}) and (\ref{eq:operator_elliptic}), and let $w \in L^\infty(\R^n\times(0,T))$ be the solution of
$$\begin{cases}
(\p_t+L)w = 0\quad \text{in} \quad \R^n\times(0,T)\\
w = w_0 \quad \text{on} \quad \{t = 0\}.
\end{cases}$$
Then,
$$w(x,t) = p_t * w_0,$$
and $p_t$ is nonnegative, $\|p_t(\cdot,t)\|_{L^1(\R^n)} = 1$ for all $t \in (0,T)$,
$$(\p_t+L)p_t = 0 \quad \text{in} \quad \R^n\times(0,T),$$
and
$$c_1\min\{t^{-\frac{n}{2s}},t|x|^{-n-2s}\} \leq p_t(x) \leq c_2\min\{t^{-\frac{n}{2s}},t|x|^{-n-2s}\},$$
for some $0 < c_1 < c_2$ depending only on $T$, the dimension, $s$ and the ellipticity constants.
\end{thm}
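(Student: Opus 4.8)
The plan is to realize $p_t$ as the transition density of the L\'evy process generated by $-L$ and to split the argument into a soft Fourier-analytic construction, an \emph{upper} bound obtained by interpolating an on-diagonal estimate with an off-diagonal ``heavy tail'' estimate, and a \emph{lower} bound obtained by upgrading the on-diagonal estimate via the semigroup property and then propagating it near- and far-diagonally. Since the kernel $K$ in \eqref{eq:operator} depends only on $y$, the operator $L$ is translation invariant with Fourier symbol $\psi(\xi)=\int_{\R^n}\big(1-\cos(\xi\cdot y)\big)K(y)\,\mathrm{d}y$, and \eqref{eq:operator_elliptic} yields $c\abs{\xi}^{2s}\le\psi(\xi)\le C\abs{\xi}^{2s}$. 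I would therefore define $p_t:=\mathcal{F}^{-1}\big[e^{-t\psi}\big]$. This is a smooth function (the integrand decays faster than any polynomial), it is a probability density because $e^{-t\psi}$ is the characteristic function of an infinitely divisible law, $\int_{\R^n}p_t=\widehat{p_t}(0)=1$, $p_t(-x)=p_t(x)$ since $\psi$ is even, and in Fourier variables $\partial_t\widehat{p_t}=-\psi\,\widehat{p_t}=-\widehat{Lp_t}$, so $(\partial_t+L)p_t=0$. That $w=p_t*w_0$ is \emph{the} bounded solution with datum $w_0$ then follows by differentiating under the integral (justified by the pointwise bounds below), by $p_t*w_0\to w_0$ as $t\to0^+$, and by the comparison principle for uniqueness.

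For the upper bound, the on-diagonal estimate is immediate: $p_t(0)=(2\pi)^{-n}\int e^{-t\psi(\xi)}\,\mathrm{d}\xi\le(2\pi)^{-n}\int e^{-ct\abs{\xi}^{2s}}\,\mathrm{d}\xi=Ct^{-n/2s}$, and likewise $\sup_x p_t(x)\le Ct^{-n/2s}$. The off-diagonal bound $p_t(x)\le Ct\abs{x}^{-n-2s}$ for $\abs{x}\gtrsim t^{1/2s}$ is the core of the theorem, and I would prove it by a Meyer/Duhamel decomposition: write $L=L^{(0)}+B$ where $L^{(0)}$ has the truncated kernel $K\,\mathbf{1}_{\{\abs{y}\le\kappa\}}$ and $B$ collects the jumps of size $>\kappa$, and expand $e^{-tL}$ as the Duhamel series in powers of the finite operator $B$. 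The zeroth term, the heat kernel of $L^{(0)}$, still satisfies the on-diagonal bound but --- because its L\'evy measure is compactly supported --- decays faster than any power of $\abs{x}$ for $\abs{x}\gtrsim t^{1/2s}$, so it does not contribute to the tail; the first term is $\asymp t\cdot K\,\mathbf{1}_{\{\abs{y}>\kappa\}}(x)\asymp t\abs{x}^{-n-2s}$; and the remaining terms sum to something of the same order. Optimizing the truncation radius $\kappa$ (taking it comparable to $t^{1/2s}$ or to $\abs{x}$ as appropriate) and combining with the on-diagonal bound gives $p_t(x)\le C\min\{t^{-n/2s},t\abs{x}^{-n-2s}\}$.

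For the lower bound I would first turn the on-diagonal upper bound into a lower bound: by the semigroup identity and symmetry, $p_{2t}(0)=\int p_t(y)^2\,\mathrm{d}y\ge\abs{B_{Rt^{1/2s}}}^{-1}\big(\int_{B_{Rt^{1/2s}}}p_t\big)^2$, and since the upper bound forces at least half the unit mass of $p_t$ into $B_{Rt^{1/2s}}$ once $R$ is large, this gives $p_t(0)\ge ct^{-n/2s}$. By the scaling identity $p_t(x)=t^{-n/2s}q_t\big(t^{-1/2s}x\big)$, where $q_t$ is the density for the rescaled kernel $t^{1+n/2s}K(t^{1/2s}\cdot)$, which obeys the \emph{same} ellipticity constants, it suffices to treat $t=1$; there $p_1$ is Lipschitz with a constant controlled only by the symbol bounds, so $p_1(x)\ge p_1(0)-\norm{\nabla p_1}_{L^\infty}\abs{x}\ge c/2$ on a fixed ball, which is the near-diagonal lower bound $p_t(x)\ge ct^{-n/2s}$ for $\abs{x}\le Ct^{1/2s}$. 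Finally, the far-diagonal lower bound $p_t(x)\ge ct\abs{x}^{-n-2s}$ for $\abs{x}\ge t^{1/2s}$ follows from a ``one big jump'' estimate: via Chapman--Kolmogorov and the L\'evy system, bound below the probability that the process stays in $B_{t^{1/2s}}$ up to time $t/2$ (bounded below by the near-diagonal estimate), then makes a single jump into $B_{ct^{1/2s}}(x)$ (rate $\gtrsim t\abs{x}^{-n-2s}$ times the volume), then stays near $x$ for the remaining time; the product of these three factors gives the claim.

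The main obstacle is the off-diagonal \emph{upper} bound: the classical Davies exponential-weight argument fails for heavy-tailed jump kernels, so one is forced into the Meyer-splitting-plus-Duhamel scheme, where controlling the sum of the multi-jump terms and optimizing the truncation radius $\kappa$ uniformly for $t\in(0,T)$ is the delicate point; this is precisely where \cite{CK03} (in the much more general metric-measure setting) invests its effort, and for the translation-invariant $L$ treated here one may alternatively invoke subordination when $K$ is comparable to the pure stable kernel.
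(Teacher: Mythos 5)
The paper does not prove this statement at all: Theorem \ref{thm:kernel_estimates} is imported verbatim from Chen--Kumagai \cite{CK03}, so there is no internal argument to compare against. Your outline is, in essence, a correct reconstruction of how such heat kernel bounds are proved, specialized to the translation-invariant setting: the Fourier/L\'evy--Khintchine construction of $p_t$ (with the symbol comparison $c|\xi|^{2s}\le\psi(\xi)\le C|\xi|^{2s}$ following from \eqref{eq:operator_elliptic}), the on-diagonal upper bound from $\int e^{-t\psi}$, the off-diagonal upper bound via Meyer splitting plus a Duhamel series, the on-diagonal lower bound from $p_{2t}(0)=\int p_t^2$ together with tightness, the near-diagonal lower bound by scaling and the gradient bound, and the far-diagonal lower bound by the ``one big jump'' argument. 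This is exactly the architecture of \cite{CK03}, except that \cite{CK03} works in a Dirichlet-form/metric-measure setting where the on-diagonal upper bound comes from a Nash-type inequality rather than from the explicit Fourier formula, and the probabilistic steps are phrased via the L\'evy system; your translation-invariant shortcut is legitimate and simpler, and it also gives smoothness of $p_t$ for free (which the paper uses in Proposition \ref{prop:grad_cont}). Two places in your sketch carry essentially all of the remaining work and should not be understated: the claim that the heat kernel of the truncated operator $L^{(0)}$ decays faster than any power at scale $\gtrsim t^{1/2s}$ requires its own argument (an exponential-moment/Davies-type estimate, which does work once the L\'evy measure is compactly supported), and the uniform-in-$t$ summation of the multi-jump Duhamel terms with the optimized truncation radius $\kappa$ is the delicate combinatorial step; you correctly identify both, and you correctly note that scaling reduces everything to $t=1$ with constants depending only on $n$, $s$, $\lambda$, $\Lambda$ (the dependence on $T$ in the statement is then harmless).
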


It is worth noticing that $p_t$ is an approximation to the identity, in the following sense.
\begin{cor}\label{cor:heat_kernel_aproxid}
Let $f \in L^\infty(\R^n)$ be uniformly continuous, and define $f_t = p_t * f$ for all $t > 0$, with $p_t$ the fundamental solution introduced in Theorem \ref{thm:kernel_estimates}. Then,
$$\|f_t\|_{L^\infty(\R^n)} \leq \|f\|_{L^\infty(\R^n)}$$
and
$$\|f_t - f\|_{L^\infty(\R^n)} \rightarrow 0 \quad \text{as} \quad t \rightarrow 0.$$
\end{cor}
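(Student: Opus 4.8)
The plan is to verify that $\{p_t\}_{t>0}$ is an approximation of the identity by the classical mollifier argument, using only two qualitative features of $p_t$ supplied by Theorem \ref{thm:kernel_estimates}: it is nonnegative with unit mass $\int_{\R^n} p_t = 1$, and it satisfies the tail bound $p_t(x) \le c_2\, t\, |x|^{-n-2s}$.

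First I would dispatch the $L^\infty$ bound. Since $p_t \in L^1(\R^n)$ and $f \in L^\infty(\R^n)$, the convolution $f_t = p_t * f$ is well defined, and using $p_t \ge 0$ together with $\int_{\R^n} p_t = 1$,
$$|f_t(x)| = \Big| \int_{\R^n} p_t(y)\, f(x-y)\,dy \Big| \le \|f\|_{L^\infty(\R^n)} \int_{\R^n} p_t(y)\,dy = \|f\|_{L^\infty(\R^n)},$$
for every $x \in \R^n$, which is the first inequality.

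For the convergence, the key estimate is that the mass of $p_t$ escapes from every fixed neighbourhood of the origin. Indeed, for any $\delta > 0$ the upper bound of Theorem \ref{thm:kernel_estimates} gives
$$\int_{\{|y| > \delta\}} p_t(y)\,dy \le c_2\, t \int_{\{|y|>\delta\}} |y|^{-n-2s}\,dy = \frac{c_2\, \omega_{n-1}}{2s}\, t\, \delta^{-2s} \longrightarrow 0 \quad \text{as} \quad t \to 0,$$
where $\omega_{n-1} = \mathcal H^{n-1}(\mathbb S^{n-1})$ and the integral is finite precisely because $2s > 0$. Now fix $\eps > 0$. By uniform continuity of $f$ there is $\delta > 0$ with $|f(x-y) - f(x)| \le \eps/2$ for all $x \in \R^n$ and all $|y| \le \delta$. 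Writing $f_t(x) - f(x) = \int_{\R^n} p_t(y)\big(f(x-y) - f(x)\big)\,dy$ and splitting the integral at $|y| = \delta$, the near part is at most $(\eps/2)\int_{\R^n} p_t = \eps/2$ by $\int p_t = 1$, while the far part is at most $2\|f\|_{L^\infty(\R^n)} \int_{\{|y|>\delta\}} p_t(y)\,dy$, which is $\le \eps/2$ once $t$ is small by the previous display. Since neither $\delta$ nor the threshold on $t$ depends on $x$, this yields $\|f_t - f\|_{L^\infty(\R^n)} \le \eps$ for all small $t$, i.e.\ $\|f_t - f\|_{L^\infty(\R^n)} \to 0$ as $t \to 0$.

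There is no genuine obstacle here; the only place where the nonlocal structure enters is the convergence of the tail integral $\int_{\{|y|>\delta\}} |y|^{-n-2s}\,dy$, which holds for every $s > 0$ and is exactly the decay quantified by the upper heat-kernel bound. Everything else is the standard approximation-of-the-identity computation, and the uniformity in $x$ is automatic since $f$ is assumed uniformly continuous.
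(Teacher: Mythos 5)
Your proposal is correct and follows essentially the same argument as the paper: the $L^\infty$ bound from nonnegativity and unit mass of $p_t$, and the uniform convergence by splitting at $|y|=\delta$, using uniform continuity on $B_\delta$ and the upper heat-kernel bound $p_t(y)\le c_2\,t\,|y|^{-n-2s}$ to show the tail mass is $O(t\,\delta^{-2s})$. No discrepancies worth noting.
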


\begin{proof}
Since $p_t \geq 0$ and $\|p_t(\cdot, t)\|_{L^1(\R^n)} = 1$, the trivial bound of the convolution suffices to obtain the first inequality.

For the second inequality, for any $\varepsilon > 0$ and any $x \in \R^n$,
\begin{align*}
    |f_t(x) - f(x)| &= \left|\int_{\R^n} p_t(y)(f(x-y) - f(x))\mathrm{d}y\right|\\
    &\leq \int_{B_\delta} p_t(y)|f(x-y) - f(x)|\mathrm{d}y + \int_{B_\delta^c} p_t(y)|f(x-y) - f(x)|\mathrm{d}y\\
    &\leq \varepsilon\int_{B_\delta}p_t + 2\|f\|_{L^\infty(\R^n)}\int_{B_\delta^c}p_t \leq \varepsilon + 2c_2\delta^{-2s}\|f\|_{L^\infty(\R^n)}t < 2\varepsilon,
\end{align*}
as we can choose $\delta$ sufficiently small to ensure $|f(x-y)-f(x)| < \varepsilon$ inside $B_\delta$ by uniform continuity, and then use Theorem \ref{thm:kernel_estimates} and make $t$ tend to zero.
\end{proof}

We will also use the following $L^1$ to $L^\infty$ bound for subsolutions.

\begin{lem}\label{lem:CD16}
Let $L$ be an operator satisfying (\ref{eq:operator}) and (\ref{eq:operator_elliptic}), and let $w \in L^\infty(\R^n\times(-1,0))$ be a subsolution of
$$(\p_t+L)w \leq C_0 \quad \text{in} \quad \R^n\times(-1,0).$$
Then, 
$$\sup\limits_{B_1\times[-1+\delta,0)}w \leq C\left(\int_{-1}^0\int_{\R^n}\frac{|w(x,t)|}{1+|x|^{n+2s}}\mathrm{d}x\mathrm{d}t+ C_0\right),$$
where $C$ depends only on $\delta > 0$, $s$, the dimension and the ellipticity constants.
\end{lem}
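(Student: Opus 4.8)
The plan is to derive this $L^1$–to–$L^\infty$ estimate from the two‑sided bounds on the fundamental solution in Theorem~\ref{thm:kernel_estimates} together with the comparison principle for the linear nonlocal heat equation, after first reducing to the homogeneous case. To remove the right‑hand side, set $\tilde w(x,t):=w(x,t)-C_0(t+1)$; since $t\mapsto t+1$ is constant in $x$ we have $L(t+1)=0$, hence $(\partial_t+L)\tilde w\le 0$ in $\R^n\times(-1,0)$ and $\tilde w\in L^\infty$. As $0\le t+1\le 1$ on $(-1,0)$, one has $\sup w\le\sup\tilde w+C_0$ and
$$\int_{-1}^0\!\!\int_{\R^n}\frac{|\tilde w|}{1+|x|^{n+2s}}\,dx\,dt\le\int_{-1}^0\!\!\int_{\R^n}\frac{|w|}{1+|x|^{n+2s}}\,dx\,dt+C_0\int_{\R^n}\frac{dx}{1+|x|^{n+2s}},$$
the last integral being finite because $n+2s>n$. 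So it suffices to prove the bound for subsolutions of $(\partial_t+L)w\le0$.

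Next, I would fix $(x_0,t_0)\in B_1\times[-1+\delta,0)$ and an arbitrary $s_0\in(-1,-1+\tfrac{\delta}{2})$, so that $t_0-s_0\in(\tfrac{\delta}{2},1)$. The function $(x,t)\mapsto(p_{t-s_0}*w(\cdot,s_0))(x)$ solves $(\partial_t+L)(\cdot)=0$ for $t>s_0$ with bounded datum $w(\cdot,s_0)$ at $t=s_0$, and $w$ is a bounded subsolution of the same equation on $\R^n\times(s_0,0)$ coinciding with it at $t=s_0$; hence the comparison principle (a consequence of the positivity of $p_t$, as used elsewhere in the paper) gives
$$w(x_0,t_0)\le\int_{\R^n}p_{t_0-s_0}(x_0-y)\,w(y,s_0)\,dy\le\int_{\R^n}p_{t_0-s_0}(x_0-y)\,|w(y,s_0)|\,dy.$$
Since $t_0-s_0\in(\tfrac{\delta}{2},1)$, Theorem~\ref{thm:kernel_estimates} yields $p_{t_0-s_0}(z)\le c_2\min\{(\delta/2)^{-n/(2s)},|z|^{-n-2s}\}\le C(\delta,n,s)(1+|z|^{n+2s})^{-1}$, and combined with the elementary inequality $1+|y|^{n+2s}\le C(n,s)(1+|x_0-y|^{n+2s})$ valid for $|x_0|\le1$ this gives $w(x_0,t_0)\le C(\delta,n,s)\int_{\R^n}\frac{|w(y,s_0)|}{1+|y|^{n+2s}}\,dy$.

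Finally I would average this inequality over $s_0\in(-1,-1+\tfrac{\delta}{2})$, an interval of length $\delta/2$ contained in $(-1,0)$, which turns the right‑hand side into $\frac{2\,C(\delta,n,s)}{\delta}\int_{-1}^0\!\int_{\R^n}\frac{|w(y,t)|}{1+|y|^{n+2s}}\,dy\,dt$. Taking the supremum over $(x_0,t_0)\in B_1\times[-1+\delta,0)$, and then undoing the reduction $w\rightsquigarrow\tilde w$ of the first paragraph, yields the claimed bound with a constant $C$ depending only on $\delta$, $s$, the dimension and the ellipticity constants.

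I do not expect a genuine obstacle here: once Theorem~\ref{thm:kernel_estimates} is available the argument is essentially mechanical. The only points deserving a little care are the justification of the comparison principle for merely bounded (viscosity) sub/supersolutions on all of $\R^n$ — which is standard and already invoked repeatedly in the paper — and the routine manipulation of the Gaussian‑type bounds for $p_t$, namely bounding $\min\{A,|z|^{-n-2s}\}$ by a multiple of $(1+|z|^{n+2s})^{-1}$ and transferring the weight from $x_0-y$ to $y$. If one preferred to avoid the kernel estimates altogether, a De Giorgi–Nash–Moser iteration would give the same conclusion, but the route above is the shortest.
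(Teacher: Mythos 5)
Your proposal is correct and follows essentially the same route as the paper: subtract $C_0(t+1)$ to reduce to $C_0=0$, bound the subsolution by $p_{t-t_0}*w(\cdot,t_0)$ via comparison, average over the earlier time variable, and use the two-sided kernel bounds of Theorem \ref{thm:kernel_estimates} (with $t-t_0$ bounded below by a multiple of $\delta$ and above by $1$) to dominate the kernel by $C(1+|y|^{n+2s})^{-1}$. The only cosmetic difference is that you average over a fixed interval $(-1,-1+\tfrac{\delta}{2})$ while the paper averages over $(t-\delta,t-\tfrac{\delta}{2})$; both are equally valid.
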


\begin{proof}
Since $w - C_0(t+1) \geq w - C_0$ and $(\p_t+L)(w - C_0(t+1)) \leq 0$, we can assume without loss of generality that $C_0 = 0$.

Then, since $w$ is a subsolution for the nonlocal heat equation, the following holds for any $-1 < t_0 < t < 0$:
$$w(x,t) \leq \int_{\R^n}p_{t-t_0}(x-y)w(y,t_0)\mathrm{d}y,$$
where $p_t(x)$ is the heat kernel associated to the operator $L$ (see Theorem \ref{thm:kernel_estimates}). Then, given $\delta > 0$, $x \in B_1$ and $t \in [-1+\delta,0)$ we can integrate the relation in time to obtain the following:
\begin{align*}
    w(x,t) &\leq \int_{t-\delta}^{t-\frac{\delta}{2}}\int_{\R^n}p_{t-\zeta}(x-y)|w(y,\zeta)|\mathrm{d}y\mathrm{d}\zeta\\
    &\leq \int_{t-\delta}^{t-\frac{\delta}{2}}\int_{\R^n} C\min\{(t-\zeta)^{-\frac{n}{2s}},(t-\zeta)|x-y|^{-n-2s}\}|w(y,\zeta)|\mathrm{d}y\mathrm{d}\zeta\\
    &\leq C\int_{t-\delta}^{t-\frac{\delta}{2}}\int_{\R^n}\frac{2}{(t-\zeta)^{\frac{n}{2s}}+(t-\zeta)^{-1}|x-y|^{n+2s}}|w(y,\zeta)|\mathrm{d}y\mathrm{d}\zeta\\
    &\leq C\int_{t-\delta}^{t-\frac{\delta}{2}}\int_{\R^n}\frac{2}{(\frac{\delta}{2})^{\frac{n}{2s}}+\delta^{-1}|x-y|^{n+2s}}|w(y,\zeta)|\mathrm{d}y\mathrm{d}\zeta\\
    &\leq C\int_{t-\delta}^{t-\frac{\delta}{2}}\int_{\R^n} \frac{1}{1+|y|^{n+2s}}|w(y,\zeta)|\mathrm{d}y\mathrm{d}\zeta,
\end{align*}
and $C$ depends on $\delta$, and universal constants ($n$, $s$, $\lambda$ and $\Lambda$).
\end{proof}

For the interior regularity, we will need an analogue of \cite[Corollary 3.4]{FR17}.

\begin{prop}\label{prop:interior_alphabeta}
Let $L$ be an operator satisfying (\ref{eq:operator}) and (\ref{eq:operator_elliptic}). Let $u \in L^\infty(\R^n\times(-1,0))$ be a viscosity solution of $u_t + Lu = f$ in $B_1\times(-1,0)$. Assume additionally that
\begin{align*}
C_0 = &\sup\limits_{t\in(-1,0)}\|u(\cdot,t)\|_{C^{\alpha}(\R^n)} + \sup\limits_{x \in \R^n}\|u(x,\cdot)\|_{C^{\beta}((-1,0))}\\
     + &\sup\limits_{t\in(-1,0)}\|f(\cdot,t)\|_{C^{\alpha}(B_1)} + \sup\limits_{x \in B_1}\|f(x,\cdot)\|_{C^{\beta}((-1,0))} < \infty,
\end{align*}
for some $\alpha, \beta \geq 0$ (with the $L^\infty$ norm if $\alpha$ or $\beta$ are $0$).

Then, for all $\varepsilon > 0$, $u \in C^{\alpha+2s-\varepsilon}_xC^{\beta+1-\varepsilon}_t(\overline{B_{1/2}}\times[-\frac{1}{2},0])$, and
$$\sup\limits_{t\in[-\frac{1}{2},0]}\|u(\cdot,t)\|_{C^{\alpha+2s-\varepsilon}(\overline{B_{1/2}})} + \sup\limits_{x \in \overline{B_{1/2}}}\|u(x,\cdot)\|_{C^{\beta+1-\varepsilon}([-\frac{1}{2},0])} \leq CC_0,$$
where $C$ only depends on the dimension, $s$, $\varepsilon$, and the ellipticity constants.
\end{prop}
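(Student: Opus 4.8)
The plan is to reduce the statement to a \emph{base} interior estimate for the linear equation and then to feed it incremental quotients. The base estimate I want is: if $w\in L^\infty(\R^n\times(-1,0))$ solves $w_t+Lw=g$ in $B_1\times(-1,0)$ with $g\in L^\infty(B_1\times(-1,0))$, then for every $\varepsilon>0$,
\[
\sup_{t\in[-1/2,0]}\|w(\cdot,t)\|_{C^{2s-\varepsilon}(\overline{B_{1/2}})}+\sup_{x\in\overline{B_{1/2}}}\|w(x,\cdot)\|_{C^{1-\varepsilon}([-1/2,0])}\le C\big(\|w\|_{L^\infty(\R^n\times(-1,0))}+\|g\|_{L^\infty(B_1\times(-1,0))}\big).
\]
Granting this, the proposition follows: since by hypothesis $u\in C^\alpha_x(\R^n)$ and $f\in C^\alpha_x(B_1)$ uniformly in $t$, the function $w_h:=|h|^{-\alpha}\big(u(\cdot+h,\cdot)-u(\cdot,\cdot)\big)$ is bounded in $\R^n\times(-1,0)$ uniformly in $h$ and, by translation invariance of $\p_t+L$, solves $(\p_t+L)w_h=|h|^{-\alpha}\big(f(\cdot+h,\cdot)-f(\cdot,\cdot)\big)$ in $B_{3/4}\times(-1,0)$ with right‑hand side bounded by $\sup_t[f(\cdot,t)]_{C^\alpha(B_1)}$. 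The base estimate gives $[w_h]_{C^{2s-\varepsilon}_x(Q_{1/2})}\le CC_0$ uniformly in $h$, which is exactly $\sup_t\|u(\cdot,t)\|_{C^{\alpha+2s-\varepsilon}(\overline{B_{1/2}})}\le CC_0$ (in the applications $\alpha+2s-\varepsilon<1$, so first differences suffice; otherwise one iterates once more on $\nabla u$). Running the same argument with $w_\tau:=|\tau|^{-\beta}\big(u(\cdot,\cdot+\tau)-u(\cdot,\cdot)\big)$ and the $C^{1-\varepsilon}_t$ part of the base estimate yields $\sup_x\|u(x,\cdot)\|_{C^{\beta+1-\varepsilon}([-1/2,0])}\le CC_0$.

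To prove the base estimate I would localize with $\eta\in C^\infty_c(B_1)$, $\eta\equiv1$ on $B_{3/4}$: then $w\eta$ solves $(\p_t+L)(w\eta)=\tilde g$ on all of $\R^n\times(-1,0)$, where on $B_{1/2}$ one has $\tilde g=\eta g+\int_{\R^n}w(\cdot+y)\big(\eta(\cdot)-\eta(\cdot+y)\big)K(y)\,\mathrm dy$, and the commutator term is bounded because the singularity of $K$ is killed by $\eta(\cdot)-\eta(\cdot+y)$. By Duhamel's formula with the heat kernel $p_t$ of Theorem \ref{thm:kernel_estimates}, $w\eta=p_{t-t_0}*(w\eta)(\cdot,t_0)+\int_{t_0}^t p_{t-\zeta}*\tilde g(\cdot,\zeta)\,\mathrm d\zeta$; averaging $t_0$ over a compact interval bounded away from $t$, the first term is controlled exactly as in Lemma \ref{lem:CD16}, so everything reduces to $I(x,t):=\int p_{t-\zeta}*\tilde g$. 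For the spatial modulus I would use $\int\big(p_\sigma(\cdot+h)-p_\sigma\big)\mathrm dy=0$ to subtract $\tilde g(x,\zeta)$, split the integral in $\sigma=t-\zeta$ at $\sigma=|h|^{2s}$, and combine $\|p_\sigma\|_{L^1}=1$, $\|\nabla p_\sigma\|_{L^1}\lesssim\sigma^{-1/2s}$ (from Theorem \ref{thm:kernel_estimates} and parabolic scaling) with $\int|p_\sigma(z)|\,|z|^{\min\{\alpha,1\}}\,\mathrm dz\lesssim\sigma^{\min\{\alpha,1\}/2s}$; for the temporal modulus I would use $\|p_{\sigma+\tau}-p_\sigma\|_{L^1}\lesssim\min\{1,\tau/\sigma\}$, whose $\sigma$‑integral is $\lesssim\tau\log(1/\tau)$. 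This logarithm is the source of the $\varepsilon$‑loss: it already appears at the level of $C^{2s}_x$, $C^1_t$, forcing the $C^{2s-\varepsilon}_xC^{1-\varepsilon}_t$ conclusion.

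The genuine difficulty is a circularity: to make $\tilde g$ (and, more basically, $Lw$) meaningful pointwise on an intermediate ball one already needs $w$ slightly better than $C^{2s}_x$ there, which is not given a priori. I would break the circle with a preliminary step: the Krylov–Safonov‑type interior Hölder estimate for the nonlocal heat equation (which holds in the viscosity sense and needs no pointwise meaning of $Lw$) gives $w\in C^{\gamma}_{\mathrm{loc}}$ for some small universal $\gamma>0$; a finite bootstrap of the incremental‑quotient estimate above — normalizing the difference quotients of $w$ by $|h|^{k\gamma}$ and the right‑hand side accordingly so that all quantities stay bounded — then lifts $w$ to $C^{2s-\varepsilon}_xC^{1-\varepsilon}_t$ locally, after which $Lw$ is bounded, $\tilde g$ is legitimate, and the argument closes. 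Beyond this, the only remaining work is the bookkeeping of the anisotropic Hölder norms near integer orders and the uniform control of the nonlocal tails through the iterations, which is routine and parallels \cite[Corollary 3.4]{FR17}.
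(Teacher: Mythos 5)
Your reduction of the proposition to a base interior estimate (bounded right-hand side $\Rightarrow$ $C^{2s-\varepsilon}_xC^{1-\varepsilon}_t$ interior) followed by incremental quotients in $x$ and $t$ is exactly the paper's route: its proof consists of quoting \cite[Theorem 2.2]{Ser15} as that base estimate and transplanting the incremental-quotient argument of \cite[Corollary 3.4]{FR17}. The genuine gap is in your attempt to prove the base estimate from scratch via Duhamel. That argument needs $\|\nabla p_\sigma\|_{L^1(\R^n)}\lesssim\sigma^{-1/(2s)}$ and $\|p_{\sigma+\tau}-p_\sigma\|_{L^1(\R^n)}\lesssim\min\{1,\tau/\sigma\}$ (equivalently $\|Lp_\sigma\|_{L^1}\lesssim\sigma^{-1}$). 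Neither follows from Theorem \ref{thm:kernel_estimates}: the Chen--Kumagai result gives two-sided pointwise bounds on $p_t$ only, with no derivative information, and parabolic scaling merely reduces the claims to $\|\nabla p_1\|_{L^1}<\infty$ and $\|Lp_1\|_{L^1}<\infty$ uniformly in the ellipticity class, which for merely measurable kernels requires decay of $\nabla p_1$ and $Lp_1$ at infinity --- a nontrivial fact (Fourier inversion only gives $L^\infty$ bounds). Within this paper the move is close to circular: the only gradient bound for $p_t$ available here, Corollary \ref{cor:grad_kernel}, is itself deduced from Proposition \ref{prop:interior_alphabeta}. So you would have to import substantial external heat-kernel estimates for rough kernels, at which point one may as well import the interior estimate \cite[Theorem 2.2]{Ser15} directly, as the authors do. (A smaller slip: $\int p_\sigma(z)|z|^{\min\{\alpha,1\}}\mathrm{d}z\lesssim\sigma^{\min\{\alpha,1\}/(2s)}$ is false whenever $\min\{\alpha,1\}\geq 2s$, because the tail $p_\sigma(z)\asymp\sigma|z|^{-n-2s}$ makes the integral diverge; this is precisely the regime $s<\frac{1}{2}$, $\alpha\geq 2s$ relevant here.)

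The proposed repair of the circularity is also not sound as stated. In the base estimate the right-hand side $g$ (hence $\tilde g$) is only $L^\infty$, so an incremental quotient of the data normalized by $|h|^{k\gamma}$ has size $|h|^{-k\gamma}\|g\|_{L^\infty}$ and is not bounded uniformly in $h$; a difference-quotient bootstrap on top of the Krylov--Safonov $C^\gamma$ estimate therefore cannot be iterated past the first step, since such bootstraps gain regularity only when the data have matching H\"older regularity. The plan is also internally inconsistent: if the bootstrap did yield $w\in C^{2s-\varepsilon}_xC^{1-\varepsilon}_t$ locally, the Duhamel machinery would be superfluous. Note finally that the pointwise-meaning worry it was meant to fix is partly a red herring: for $s<\frac{1}{2}$ the cutoff commutator $\int w(x+y)\big(\eta(x)-\eta(x+y)\big)K(y)\,\mathrm{d}y$ is bounded for $w\in L^\infty$ with no regularity of $w$ (whereas for $s\geq\frac{1}{2}$, which the statement also covers, boundedness alone does not control it and your first-difference scheme caps at Lipschitz in $x$, short of $C^{2s-\varepsilon}$); what actually needs justification is that $w\eta$ solves the localized equation in the viscosity sense and coincides with its Duhamel representation, a comparison/uniqueness matter rather than a differentiability one. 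The clean fix is to keep your reduction and take as base estimate the interior regularity theorem for viscosity solutions with rough kernels, \cite[Theorem 2.2]{Ser15} (or the estimates of \cite{CD14}), exactly as in the paper.
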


\begin{proof}
The proof is the same as the proof of \cite[Corollary 3.4]{FR17}, but using \cite[Theorem 2.2]{Ser15} instead of \cite[Theorem 1.3]{FR17}.
\end{proof}

Combining the heat kernel estimates with the interior regularity result, we obtain the following bound.

\begin{cor}\label{cor:grad_kernel}
Let $L$ be an operator satisfying (\ref{eq:operator}) and (\ref{eq:operator_elliptic}), and let $p_t$ as introduced in Theorem \ref{thm:kernel_estimates}. Then, for all $r_0 > 0$,
$$\|\nabla p_t\|_{L^\infty(B_{r_0}^c\times(0,T))} \leq C,$$
where $C$ depends only on $r_0$, $T$, the dimension, $s$ and the ellipticity constants.
\end{cor}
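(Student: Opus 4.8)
```latex
\textbf{Proof proposal for Corollary \ref{cor:grad_kernel}.}

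The plan is to combine the two-sided bounds on $p_t$ from Theorem \ref{thm:kernel_estimates} with the interior parabolic regularity estimate of Proposition \ref{prop:interior_alphabeta}, applied on small space-time cylinders centered away from the origin, and then rescale. The key point is that $p_t$ solves the homogeneous equation $(\p_t + L)p_t = 0$ in all of $\R^n \times (0,T)$, so away from a neighbourhood of $x=0$ we have a genuine interior equation with zero right-hand side, and the two-sided bound gives the needed a priori $L^\infty$ control on the relevant cylinder (not just locally, but with the correct tail decay to feed into the nonlocal estimate).

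First I would fix $r_0 > 0$ and a point $(x,t)$ with $|x| \geq r_0$ and $t \in (0,T)$. I would like to apply Proposition \ref{prop:interior_alphabeta} on a parabolic cylinder of size comparable to $\min\{r_0, t^{1/2s}\}$ around $(x,t)$; call this scale $\rho$. Set $\tilde p(y,\tau) := p_{t+\rho^{2s}\tau}(x+\rho y)$ for $(y,\tau) \in B_1 \times (-1,0)$, which solves $\partial_\tau \tilde p + \tilde L \tilde p = 0$ with $\tilde L$ the rescaled operator (same ellipticity constants). The bound $p_\sigma(z) \leq c_2 \min\{\sigma^{-n/2s}, \sigma |z|^{-n-2s}\}$ translates into a uniform $L^\infty$ bound on $\tilde p$ over the whole cylinder, and moreover the tail $\int_{\R^n}\frac{|\tilde p(y,\tau)|}{1+|y|^{n+2s}}\,dy$ is controlled uniformly (using $\|p_\sigma(\cdot)\|_{L^1(\R^n)} = 1$ and the time scaling). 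Proposition \ref{prop:interior_alphabeta} with $f \equiv 0$, $\alpha = \beta = 0$, then gives $\tilde p \in C^{2s-\varepsilon}_y C^{1-\varepsilon}_\tau$ on $B_{1/2}\times[-\tfrac12,0]$ with norm bounded by $C\|\tilde p\|_{L^\infty} \leq C$; in particular, since $2s - \varepsilon$ can be taken $> 1$ only when $s > 1/2$, which is \emph{not} our regime, I instead iterate the estimate once (bootstrap): first gain $C^{2s-\varepsilon}$ in $x$, feed this back as the new $\alpha$, and obtain $C^{4s - \varepsilon}$ regularity in $x$, which does exceed $1$ since $4s$ could still be $< 1$ — so in fact one iterates $k$ times until $2ks > 1$, legitimate because each step only uses the already-established Hölder bounds and the homogeneous equation. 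This yields $\|\nabla_y \tilde p\|_{L^\infty(B_{1/4}\times[-1/4,0])} \leq C$.

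Undoing the scaling, $\nabla_x p_t(x) = \rho^{-1}\nabla_y \tilde p(0,0)$, so $|\nabla p_t(x)| \leq C\rho^{-1} = C\max\{r_0^{-1}, t^{-1/2s}\}$. This is already bounded uniformly in the region $t \geq t_0 > 0$, but to cover all $t \in (0,T)$ with $|x| \geq r_0$ one must handle $t \to 0^+$: there the choice $\rho \sim t^{1/2s}$ degenerates. However, for small $t$ and $|x| \geq r_0$, one uses the \emph{other} side of the kernel bound, $p_t(x) \leq c_2 t |x|^{-n-2s} \leq c_2 t r_0^{-n-2s}$, which is tiny, and differentiating the representation $p_t = p_{t/2} * p_{t/2}$ (semigroup property) one writes $\nabla p_t(x) = \int \nabla p_{t/2}(x-z) p_{t/2}(z)\,dz$ and splits the integral according to whether $|z| < r_0/2$ (so $|x-z| > r_0/2$ and the previous cylinder estimate at scale $\rho \sim r_0$ applies uniformly) or $|z| \geq r_0/2$ (small mass, controlled by $\int p_{t/2} = 1$ together with the global Lipschitz-in-$x$ bound from the first region applied at the fixed scale $r_0$). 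The main obstacle is precisely this $t \to 0$ behaviour together with making the bootstrap in Proposition \ref{prop:interior_alphabeta} clean for $s$ small; both are handled by the semigroup decomposition and the fact that all constants from Theorem \ref{thm:kernel_estimates} and Proposition \ref{prop:interior_alphabeta} depend only on $T$, $n$, $s$ and the ellipticity constants, so the final constant depends only on $r_0$, $T$, $n$, $s$, $\lambda$, $\Lambda$.
```
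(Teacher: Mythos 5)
Your first step, iterating Proposition \ref{prop:interior_alphabeta} on rescaled cylinders and undoing the scaling, is essentially the paper's strategy and is fine up to minor care: you should take $\rho\le\tfrac12\min\{r_0,t^{1/2s}\}$ so that the rescaled cylinder stays at distance $\gtrsim r_0$ from the origin (otherwise the sup of $\tilde p$ on the cylinder is not uniformly bounded), the bootstrap needs the gained H\"older regularity globally in $x$, which does follow by translation invariance since the equation holds in all of $\R^n\times(0,T)$, and Proposition \ref{prop:interior_alphabeta} as stated requires global $L^\infty$/H\"older norms rather than the $L^1_s$-tail you invoke, so a cutoff/localization step must be inserted (a liberty the paper also takes). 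The genuine gap is the regime $t<r_0^{2s}$, which you correctly identify but do not actually repair. Your scaling only gives $|\nabla p_t(x)|\lesssim t^{-1/(2s)}$ there, and the semigroup splitting $\nabla p_t(x)=\int\nabla p_{t/2}(x-z)\,p_{t/2}(z)\,\mathrm{d}z$ is circular on the piece $|z|<r_0/2$: you invoke ``the cylinder estimate at scale $\rho\sim r_0$'' for $\nabla p_{t/2}(x-z)$, but no parabolic cylinder of height $r_0^{2s}$ fits inside $(0,t/2)$ when $t<r_0^{2s}$, so the only available bound is again $(t/2)^{-1/(2s)}$; in effect this piece assumes the uniform small-time gradient bound you are trying to prove, at time $t/2$, and the induction does not close because $\int_{|z|<r_0/2}p_{t/2}\to1$ gives no contraction. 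The piece $|z|\ge r_0/2$ is not controlled either: there $x-z$ may lie near the origin, where $|\nabla p_{t/2}|$ can be of order $t^{-(n+1)/(2s)}$ on a set of volume $\sim t^{n/(2s)}$, while $p_{t/2}(z)\lesssim t\,r_0^{-n-2s}$, so the best your ingredients give is $O(t^{1-1/(2s)})$, which diverges as $t\to0^+$ precisely because $s<\tfrac12$.

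For comparison, the paper closes the small-time regime without any semigroup identity: it applies the rescaled, iterated interior estimate on the cylinder $B_{t^{1/2s}}(x)\times(0,t)$ and compensates the degenerating scaling factor with the upper bound $p_\sigma(y)\le c_2\,\sigma\,|y|^{-n-2s}$, i.e.\ with the fact that the sup of $p$ over that cylinder is itself $O(t)$ when $|x|\ge r_0$. If you prefer to keep your structure, a clean alternative is to observe that for $|y|\ge r_0/2$ one has $p_\sigma(y)\le c_2\,\sigma\,(r_0/2)^{-n-2s}\to0$ as $\sigma\to0^+$, so $p$ extends continuously by zero to negative times away from the origin and still solves the equation there; then a cylinder of fixed spatial scale $\sim r_0$ is available for every $t\in(0,T)$, the tail is controlled by $\|p_\sigma\|_{L^1}=1$, and your first argument applies verbatim. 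As written, however, the case of small $t$ is not proved.
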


\begin{proof}
Assume after a scaling that $r_0 = 1$. Iterating proposition \ref{prop:interior_alphabeta}, we obtain that
$$\sup_{t \in [-2^{-k},0]}\|p_t(\cdot,t)\|_{C^1(\overline{B_{2^{-k}}})} \leq C\|p_t\|_{L^\infty(B_1\times(-1,0))},$$
for some big enough $k$ depending only on $s$. After a scaling and a covering argument, for all $x\in B_1^c$ it holds
$$\begin{array}{rclll}
\|\nabla p_t\|_{L^\infty(B_{\delta/2}(x)\times[t-\frac{\delta^{2s}}{2},t])} &\leq &C_\delta\|p_t\|_{L^\infty(B_\delta(x)\times(t-\delta^{2s},t))}, &\text{for all} &t\in(\delta^{2s},T),\\
\|\nabla p_t\|_{L^\infty(B_{t^{\frac{1}{2s}}/2}(x)\times[\frac{t}{2},t])} &\leq &C_0t^{-1}\|p_t\|_{L^\infty(B_{t^{\frac{1}{2s}}}(x)\times(0,t))}, &\text{for all} &t\in(0,T),
\end{array}$$
where we leave $\delta > 0$ to be chosen later.

Then, using Theorem \ref{thm:kernel_estimates}, substituting the estimate $|p_t(x)| \leq c_2t|x|^{-n-2s}$,
$$\|\nabla p_t\|_{L^\infty(B_{\delta/2}(x)\times[t-\frac{\delta^{2s}}{2},t])} \leq C_\delta c_2t(1-\delta)^{-n-2s}, \quad \text{for all} \ t\in(\delta^{2s},T),$$
and
$$\|\nabla p_t\|_{L^\infty(B_{t^{\frac{1}{2s}}/2}(x)\times[\frac{t}{2},t])} \leq C_0c_2(1-t^{\frac{1}{2s}})_+^{-n-2s}, \quad \text{for all} \ t\in(0,T).$$

Finally, choosing $\delta = \frac{1}{4}$, for all $x \in B_1^c$ and $t \geq 4^{-2s}$, 
$$|\nabla p_t(x,t)| \leq C_{1/4}c_2t(3/4)^{-n-2s} \leq C_{1/4}c_2T(3/4)^{-n-2s},$$
and for all $x \in B_1^c$ and $t \in (0,4^{-2s})$, 
$$|\nabla p_t(x,t)| \leq C_0c_2(3/4)^{-n-2s},$$
as we wanted to prove.
\end{proof}

We will also make use of the following estimate for the nonlocal heat equation.

\begin{prop}\label{prop:barrier}
Let $L$ be an operator satisfying (\ref{eq:operator}) and (\ref{eq:operator_elliptic}). Then, there exists $\delta > 0$ such that the following holds. If $b \in L^\infty$ is continuous and satisfies
$$\left\{\begin{array}{rclll}
|(\p_t+L)b| & \leq & \delta \max\{|x|, 1\}^{-n-2s} & \text{in} & \R^n\times(0,1)\\
b & = & b_0 & \text{on} & \{t = 0\},
\end{array}\right.$$
where $b_0 \geq 0$, $\operatorname{supp} b_0 \subset B_1$ and $\|b_0\|_{L^1(B_1)} = 1$, the following estimate holds:
$$c_1t|x|^{-n-2s} \leq b(x,t) \leq c_2t|x|^{-n-2s} \quad \text{for all} \quad (x,t) \in B_2^c\times(0,1)$$
The constants $\delta$, $c_1$ and $c_2$ are positive and depend only on the dimension, $s$ and the ellipticity constants.
\end{prop}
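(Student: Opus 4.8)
The plan is to establish the upper and lower bounds separately, in each case comparing $b$ to the heat kernel $p_t$ of Theorem \ref{thm:kernel_estimates} plus a small correction that absorbs the right-hand side. Write $g := (\p_t+L)b$, so $\|g(\cdot,t)\|_{L^\infty} \leq \delta\max\{|x|,1\}^{-n-2s}$ and in particular $g \in L^1_s(\R^n)$ uniformly in $t$ with $\|g(\cdot,t)\|_{L^1_s} \leq C\delta$. By Duhamel's formula,
$$b(x,t) = \int_{\R^n}p_t(x-y)b_0(y)\,\mathrm{d}y + \int_0^t\int_{\R^n}p_{t-\zeta}(x-y)g(y,\zeta)\,\mathrm{d}y\,\mathrm{d}\zeta =: I(x,t) + II(x,t).$$
For the main term $I$: since $b_0 \geq 0$ is supported in $B_1$ with unit mass, for $x \in B_2^c$ and $y \in B_1$ we have $|x-y| \asymp |x|$, and Theorem \ref{thm:kernel_estimates} gives $c_1 t|x|^{-n-2s} \leq I(x,t) \leq c_2 t|x|^{-n-2s}$ for $(x,t)\in B_2^c\times(0,1)$, after adjusting the constants (here one uses that $t^{-n/2s} \gtrsim t|x|^{-n-2s}$ is irrelevant since $|x|\geq 2$ forces the second branch of the min to be the smaller one for $t < 1$, or more carefully one keeps both branches and notes $\min\{t^{-n/2s},t|x|^{-n-2s}\} = t|x|^{-n-2s}$ whenever $|x|^{2s}\geq t$, which holds here).

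For the error term $II$: I would bound $|II(x,t)| \leq \int_0^t \int_{\R^n} p_{t-\zeta}(x-y)\,\delta\max\{|y|,1\}^{-n-2s}\,\mathrm{d}y\,\mathrm{d}\zeta$ and show this is $\leq C\delta\, t|x|^{-n-2s}$ for $x\in B_2^c$, $t\in(0,1)$. The cleanest route is to recognize that $\phi(x) := \max\{|x|,1\}^{-n-2s}$ (or a smooth comparable function) is, up to constants, a supersolution/subsolution object: one checks that $\int_{\R^n}p_{t-\zeta}(x-y)\phi(y)\,\mathrm dy \leq C\min\{(t-\zeta)^{-n/2s},\,(t-\zeta)|x|^{-n-2s}\} + C\phi(x)$ by splitting the $y$-integral into $|y-x| \leq |x|/2$ (where $\phi(y)\asymp\phi(x)$ and $\int p_{t-\zeta} \leq 1$) and $|y-x| > |x|/2$ (where $p_{t-\zeta}(x-y) \leq c_2(t-\zeta)|x|^{-n-2s}$ pulls out and $\int\phi = C$). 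Integrating in $\zeta\in(0,t)$ then yields $|II(x,t)| \leq C\delta(t|x|^{-n-2s} + t\phi(x)) \leq C\delta\, t|x|^{-n-2s}$ since $\phi(x) = |x|^{-n-2s} \leq |x|^{-n-2s}$ and $t\leq 1$. Choosing $\delta$ small enough that $C\delta < c_1/2$, the error term is dominated by half the main term's lower bound, so $b = I + II$ satisfies $\frac{c_1}{2}t|x|^{-n-2s} \leq b(x,t) \leq (c_2 + \frac{c_1}{2})t|x|^{-n-2s}$ on $B_2^c\times(0,1)$; rename the constants.

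I expect the main obstacle to be the careful estimate of the convolution $\int p_{t-\zeta}(x-y)\phi(y)\,\mathrm dy$, specifically tracking that the contribution from $y$ near the origin (where $\phi$ is largest, of size $1$) produces only a term of order $(t-\zeta)|x|^{-n-2s}$ and not something larger — this uses that for $x\in B_2^c$ and $y\in B_1$ we have $|x-y| \asymp |x|$ so $p_{t-\zeta}(x-y)\lesssim (t-\zeta)|x|^{-n-2s}$, combined with $\|\phi\|_{L^1} < \infty$ (which is where $s > 0$, i.e. $n+2s > n$, is used). The parabolic boundary/initial-time behavior is harmless since $b_0$ is fixed and compactly supported; continuity of $b$ is assumed, so Duhamel's formula applies and no regularization is needed. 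A minor point is justifying Duhamel's representation for an $L^\infty$ solution with $L^1_s$ right-hand side, which follows from uniqueness for the linear nonlocal heat equation (Theorem \ref{thm:kernel_estimates}) applied to the difference of $b$ and the explicit Duhamel integral.
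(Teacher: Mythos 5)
Your proposal is correct and takes essentially the same route as the paper's proof: Duhamel's formula, the two-sided heat kernel bounds of Theorem \ref{thm:kernel_estimates} for the main term (which is $\asymp t|x|^{-n-2s}$ since $|x-y|\asymp|x|$ for $y\in B_1$, $x\in B_2^c$), and absorption of the Duhamel error term by taking $\delta$ small. The only difference is cosmetic: you split the convolution $\int_{\R^n} p_{t-\zeta}(x-y)\max\{|y|,1\}^{-n-2s}\,\mathrm{d}y$ into two regions ($|y-x|\leq |x|/2$ and its complement) instead of the paper's four, but both give the same per-time-slice bound $\lesssim |x|^{-n-2s}$ and hence the error bound $\lesssim \delta\, t|x|^{-n-2s}$.
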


\begin{proof}
We will use Duhamel's formula with the fundamental solution, together with Theorem \ref{thm:kernel_estimates}. Let us take $\delta = 0$ first and then we will show that the perturbation introduced by the right hand side can be absorbed by the constants.

If $|x| > 2$ and $t < 1$, $p_t(x) \asymp t|x|^{-n-2s}$. Thus, if $|x| \geq 2$, for all $y \in B_1$, $|x-y| \asymp |x|$, and then
\begin{align*}
    b(x,t) &= \int_{\R^n}p_t(x-y)b_0(y)\mathrm{d}y = \int_{B_1}p_t(x-y)b_0(y)\mathrm{d}y\\
    &\asymp \int_{B_1}t|x|^{-n-2s}b_0(y)\mathrm{d}y = t|x|^{-n-2s}.
\end{align*}

Now, if we allow a right hand side in the PDE, making $\delta > 0$, we obtain the following:
$$\left|b_R(x,t) - \int_{\R^n}p_t(x-y)b_0(y)\mathrm{d}y\right| \leq \delta\int_0^t\int_{\R^n}p_{t-\zeta}(x-y)\max\{|y|,1\}^{-n-2s}\mathrm{d}y\mathrm{d}\zeta,$$
and then we can estimate the second integral as follows. First we separate the integral in pieces, taking into account that $p_t(x-y) \lesssim \min\{t^{-\frac{n}{2s}},t|x-y|^{-n-2s}\}$, and also that $|x| \geq 2$.

\begin{align*}
    I_1 &:= \int_{B_1}t|x-y|^{-n-2s}\mathrm{d}y \lesssim t|x|^{-n-2s},\\
    I_2 &:= \int_{B_{t^{\frac{1}{2s}}}(x)}t^{-\frac{n}{2s}}\max\{1,|y|\}^{-n-2s}\mathrm{d}y \lesssim (t^{\frac{1}{2s}})^nt^{-\frac{n}{2s}}|x|^{-n-2s} = |x|^{-n-2s},\\
    I_3 &:= \int_{B_1(x)\setminus B_{t^{\frac{1}{2s}}}(x)}t|x-y|^{-n-2s}|y|^{-n-2s}\mathrm{d}y \lesssim t(t^{\frac{1}{2s}})^{-2s}|x|^{-n-2s} = |x|^{-n-2s},\\
    I_4 &:= \int_{B_1^c\cap B_1^c(x)}t|x-y|^{-n-2s}|y|^{-n-2s}\mathrm{d}y = t\int_{B_1^c\cap B_1^c(x)}|x-y|^{-n-2s}|y|^{-n-2s}\mathrm{d}y\\
    &= 2t\int_{B_1^c\cap\{x\cdot y \leq |x|^2/2\}}|x-y|^{-n-2s}|y|^{-n-2s}\mathrm{d}y \lesssim t|x|^{-n-2s}\int_{B_1^c}|y|^{-n-2s}\mathrm{d}y \lesssim t|x|^{-n-2s},
\end{align*}
where we used that $|x-y| \geq \frac{|x|}{2}$ in the half-space $\{x\cdot y \leq |x|^2/2\}$ to estimate $I_4$.

Putting everything together, we have
$$\int_{\R^n}p_t(x-y)\max\{|y|,R\}^{-n-2s}\mathrm{d}y \leq I_1 + I_2 + I_3 + I_4 \lesssim |x|^{-n-2s}.$$

Therefore, the error term introduced by the right hand side in the PDE can be bounded by the main term:
\begin{align*}
    \left|b_R(x,t) - \int_{\R^n}p_t(x-y)b_0(y)\mathrm{d}y\right| &\leq \delta \int_0^t\int_{\R^n}p_{t-\zeta}(x-y)\max\{|y|,1\}^{-n-2s}\mathrm{d}y\mathrm{d}\zeta\\
    &\lesssim \delta t|x|^{-n-2s} \lesssim \delta\int_{\R^n}p_t(x-y)b_0(y)\mathrm{d}y.
\end{align*}

Thus, choosing $\delta$ small enough, we have $b_R(x,t) \asymp t|x|^{-n-2s}$ for $|x| \geq 2$.
\end{proof}

\section{The penalized parabolic obstacle problem}\label{sect:appendix_penaliz}
First, we need that the penalized problem has a unique solution. To do that, we first prove that there holds a comparison principle.

\begin{lem}\label{lem:comparison_beta}
Let $\varepsilon > 0$, let $L$ be a nonlocal operator satisfying (\ref{eq:operator}) and (\ref{eq:operator_elliptic}), and let $f, g, \varphi, \psi, u_0$ and $v_0$ be uniformly Lipschitz and bounded, and let $u$ and $v$ be uniformly Lipschitz and bounded solutions of the following parabolic problems:
$$\left\{\begin{array}{rclll}
\p_tu + Lu & = & \beta_\varepsilon(u - \varphi) + f & \text{in} & \R^n\times(0,T)\\
u(\cdot,0) & = & u_0,
\end{array}\right.$$
$$\left\{\begin{array}{rclll}
\p_tv + Lv & = & \beta_\varepsilon(v - \psi) + g & \text{in} & \R^n\times(0,T)\\
v(\cdot,0) & = & v_0,
\end{array}\right.$$
where $\beta_\varepsilon(z) = e^{-z/\varepsilon}$.
Assume additionally that $u_0 \leq v_0$, $\varphi \leq \psi$ and $f \leq g$. Then, $u \leq v$ in $\R^n\times(0,T)$.
\end{lem}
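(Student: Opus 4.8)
The plan is to reduce the statement to the assertion $w:=u-v\le 0$, to exploit the monotonicity of $\beta_\varepsilon$ to turn $w$ into a subsolution of the nonlocal heat equation on the region where it is nonnegative, and then to close the argument with a maximum principle adapted to the unbounded domain.

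Subtracting the two equations gives
\[
\p_t w+Lw=\bigl(\beta_\varepsilon(u-\varphi)-\beta_\varepsilon(v-\psi)\bigr)+(f-g)\quad\text{in }\R^n\times(0,T),\qquad w(\cdot,0)=u_0-v_0\le 0 .
\]
The key elementary observation is that on the set $\{w\ge 0\}=\{u\ge v\}$ one has $u-\varphi\ge v-\varphi\ge v-\psi$ — here the hypothesis $\varphi\le\psi$ enters — so, since $\beta_\varepsilon$ is \emph{decreasing}, $\beta_\varepsilon(u-\varphi)\le\beta_\varepsilon(v-\psi)$; combined with $f\le g$ this yields
\[
\p_t w+Lw\le 0\qquad\text{in }\{w\ge 0\}.
\]
Equivalently one may linearise: writing $\beta_\varepsilon(u-\varphi)-\beta_\varepsilon(v-\psi)=\bigl(\beta_\varepsilon(u-\varphi)-\beta_\varepsilon(v-\varphi)\bigr)+\bigl(\beta_\varepsilon(v-\varphi)-\beta_\varepsilon(v-\psi)\bigr)$, the second term is $\le 0$ by $\varphi\le\psi$, while the first equals $-c\,w$ with $c=-\beta_\varepsilon'(\xi)\ge 0$, bounded on the bounded range of the data, so $w$ satisfies the linear inequality $\p_t w+Lw+c\,w\le 0$ with $c\ge 0$ — the favourable sign.

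To conclude I would fix $\tau\in(0,T)$ and argue on $\R^n\times[0,\tau]$. Since $u_0,v_0$ need not decay, I introduce the weight $\phi(x):=(1+|x|^2)^{\beta/2}$ with $\beta\in(0,2s)$; a standard computation — using the second-difference cancellation and $\beta<2s$ for integrability at infinity — gives $\phi\in L^1_s(\R^n)$ and $C_\phi:=\|L\phi\|_{L^\infty(\R^n)}<\infty$. Suppose, for contradiction, that $\sup_{\R^n\times[0,\tau]}w>0$, and set
\[
w_\eta(x,t):=w(x,t)-\eta\,\phi(x)-\eta\,(C_\phi+1)\,t,\qquad\eta>0 .
\]
Then $w_\eta(\cdot,0)\le-\eta\phi<0$, and $w_\eta\to-\infty$ as $|x|\to\infty$ uniformly on $[0,\tau]$ (because $w$ is bounded); hence for $\eta$ small enough $\sup w_\eta>0$ and is attained at some $(x_0,t_0)$ with $t_0\in(0,\tau]$. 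As $w(x_0,t_0)\ge w_\eta(x_0,t_0)>0$, the subsolution property holds at $(x_0,t_0)$, while the fact that $w_\eta$ has a global maximum there (interior in $t$, or $t_0=\tau$) gives $\p_t w_\eta(x_0,t_0)\ge 0$ and $Lw_\eta(x_0,t_0)\ge 0$, whence
\[
0\le\p_t w_\eta(x_0,t_0)+Lw_\eta(x_0,t_0)=(\p_t w+Lw)(x_0,t_0)-\eta L\phi(x_0)-\eta(C_\phi+1)\le-\eta<0,
\]
a contradiction. Thus $w\le 0$ on $\R^n\times[0,\tau]$, and letting $\tau\uparrow T$ gives $u\le v$ in $\R^n\times(0,T)$.

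The step that I expect to need the most care is the pointwise use of the equation at the maximum point $(x_0,t_0)$. Since $u,v$ are only assumed Lipschitz, the sentence ``$w_\eta$ has a maximum there, hence $\p_t w_\eta\ge 0$ and $Lw_\eta\ge 0$'' must be justified either in the viscosity sense — the smooth function $\eta\phi(x)+\eta(C_\phi+1)t+\text{const}$ touches $u-v$ from above at $(x_0,t_0)$, which is legitimate by the definitions of viscosity sub/supersolution for $u$ and $v$ (handled by the standard doubling-of-variables lemma in the nonlocal parabolic setting) — or else by first upgrading $u,v$ to classical solutions, which is possible here since the right-hand sides $\beta_\varepsilon(u-\varphi)+f$ and $\beta_\varepsilon(v-\psi)+g$ are Hölder continuous, so interior regularity for the linear nonlocal parabolic equation applies. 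The remaining ingredients — the inequality for $\beta_\varepsilon$, the bound $\|L\phi\|_{L^\infty}<\infty$, and the attainment of the supremum — are routine, and one should note that $\beta_\varepsilon$ need only be controlled on the bounded range fixed by $\|u\|_{L^\infty}$, $\|v\|_{L^\infty}$, $\|\varphi\|_{L^\infty}$, $\|\psi\|_{L^\infty}$, where it is Lipschitz.
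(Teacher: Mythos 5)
Your proof is correct, and it is the same basic strategy as the paper's: perturb the difference of the two solutions by a barrier of sublinear growth so that an extremum is attained, evaluate the equation there, and use the monotonicity of $\beta_\varepsilon$ together with $\varphi\le\psi$, $f\le g$ and the ordering of the initial data. The differences are worth noting. The paper works with the minimum of $v-u+\frac{\delta}{T-t}+\delta(1+|x|)^s+\delta M$ and, after using that $\beta_\varepsilon$ is decreasing, still needs a \emph{quantitative} step: from $\beta_\varepsilon(v-\varphi)-\beta_\varepsilon(u-\varphi)\le C\delta$ it infers $v-u\ge -C'\delta$ at the minimum point (implicitly via the mean value theorem and a lower bound on $|\beta_\varepsilon'|$ on the bounded range, so $C'$ depends on $\varepsilon$), and then the extra constant $\delta M$ with $M>C'$ produces the contradiction. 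Your version only uses the \emph{sign} of the monotonicity: at the maximum point of $w_\eta$ you already know $u>v$, hence $\beta_\varepsilon(u-\varphi)\le\beta_\varepsilon(v-\psi)$ and $(\p_t+L)w\le0$ there, and the linear-in-$t$ term $\eta(C_\phi+1)t$ absorbs $\eta L\phi$ to give the strict inequality $\le-\eta$; this is slightly cleaner, avoids any $\varepsilon$-dependent constant, and makes the role of each hypothesis transparent. Both arguments share the same mild gloss at the extremal point, namely that $\p_t$ and $L$ can be evaluated pointwise on the difference of two Lipschitz solutions (the paper simply asserts this ``thanks to the uniform Lipschitz regularity'', which for $L$ is only immediate when $2s<1$); you flag this explicitly, and your proposed remedy of upgrading $u,v$ to classical solutions via interior regularity for the linear equation with H\"older right-hand side (as the paper itself does in Proposition \ref{prop:classic_penaliz} and Lemma \ref{lem:diff_penaliz}) closes that gap, while the purely viscosity route would indeed require the standard doubling-of-variables machinery rather than a direct appeal to the definitions. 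Your verification that $\|L\phi\|_{L^\infty}<\infty$ for $\phi=(1+|x|^2)^{\beta/2}$, $\beta\in(0,2s)$, via second differences is the right computation and plays the same role as the paper's bound on $L(1+|x|)^s$.
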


\begin{proof}
Assume that $\inf (v - u) < 0$, otherwise there is nothing to prove. Let $\delta > 0$ small, $M > 0$ large to be chosen later, and let $p(x) = (1+|x|)^s$. First, one can check by a direct computation that $Lp$ is bounded. Then, the function
$$w(x,t) = v(x,t) - u(x,t) + \frac{\delta}{T - t} + \delta p(x) + \delta M$$
has an absolute minimum in $\R^n\times[0,T]$, and taking $\delta$ small enough, the minimum is negative. Let $(x_0,t_0)$ be the minimum point. First, observe that, since the minimum is negative, $t_0 > 0$, because $v \geq u$ at $t = 0$. Notice also that $t_0 < T$ because $\delta(T-t)^{-1}$ tends to infinity as $t \rightarrow T$. Then, $(x_0,t_0)$ is an interior point and then we can differentiate in $t$ and evaluate $L$, which is well defined thanks to the uniform Lipschitz regularity. Therefore,
\begin{gather*}
    v_t(x_0,t_0) - u_t(x_0,t_0) + \frac{\delta}{(T-t_0)^2} = 0\\
    Lv(x_0,t_0) - Lu(x_0,t_0) + \delta Lp(x_0) \leq 0.
\end{gather*}
Furthermore, we can also evaluate the equations at $(x_0,t_0)$ to obtain
\begin{align*}
    (\p_t + L)u(x_0,t_0) &= \beta_\varepsilon(u(x_0,t_0) - \varphi(x_0)) + f(x_0,t_0)\\
    (\p_t + L)v(x_0,t_0) &= \beta_\varepsilon(v(x_0,t_0) - \psi(x_0)) + g(x_0,t_0).
\end{align*}
And then combining the equations and using that $\beta_\varepsilon$ is decreasing,
\begin{align*}
    \beta(v - \varphi) - \beta(u - \varphi) &\leq \beta(v - \psi) + g - \beta(u - \varphi) - f\\
    &= (\p_t+L)(v-u) \leq \delta\left[Lp - \frac{1}{(T-t_0)^2}\right] \leq C\delta,
\end{align*}
where we have omitted that all the functions are considered at the point $(x_0,t_0)$ for ease of read. It follows that $v(x_0,t_0) - u(x_0,t_0) \geq -C'\delta$. Therefore, choosing $M > C'$, $w(x_0,t_0) > 0$, a contradiction. Therefore $v \geq u$ in $\R^n\times(0,T)$.
\end{proof}

Then, using the Perron method, one can construct a viscosity solution for the penalized problem.

\begin{prop}\label{prop:classic_penaliz}
For all $\varepsilon > 0$ and $\varphi \in C^{2,1}_c(\R^n)$, there exists a unique viscosity solution, $u^\varepsilon \in C(\R^n\times[0,T])\cap L^\infty(\R^n\times[0,T])$, to the penalized problem
$$\left\{\begin{array}{rclll}
\p_tu^\varepsilon + Lu^\varepsilon & = & \beta_\varepsilon(u^\varepsilon - \varphi) & \text{in} & \R^n\times(0,T)\\
u^\varepsilon(\cdot,0) & = & \varphi + \sqrt{\varepsilon},
\end{array}\right.$$
where $\beta_\varepsilon(z) = e^{-z/\varepsilon}$.
\end{prop}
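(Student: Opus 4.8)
The plan is to get uniqueness for free from the comparison principle already proven in Lemma~\ref{lem:comparison_beta}, and to get existence by the method of sub- and supersolutions combined with the linear theory for the nonlocal heat equation of Theorem~\ref{thm:kernel_estimates}. First I would construct explicit barriers that share the prescribed initial datum. Since $\varphi \in C^{2,1}_c(\R^n)$, the function $L\varphi$ is bounded; put $C_\varphi := \|L\varphi\|_{L^\infty(\R^n)} + 1$. Then $\bar u(x,t) := \varphi(x) + \sqrt\varepsilon + C_\varphi t$ satisfies $(\p_t + L)\bar u = C_\varphi + L\varphi \ge 1 \ge e^{-1/\sqrt\varepsilon} \ge \beta_\varepsilon(\sqrt\varepsilon + C_\varphi t) = \beta_\varepsilon(\bar u - \varphi)$, so it is a supersolution, and $\underline u(x,t) := \varphi(x) + \sqrt\varepsilon - C_\varphi t$ satisfies $(\p_t + L)\underline u = -C_\varphi + L\varphi \le 0 \le \beta_\varepsilon(\underline u - \varphi)$, so it is a subsolution; both equal $\varphi + \sqrt\varepsilon$ at $t=0$ and $\underline u \le \bar u$. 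These barriers provide the bound $\underline u \le u^\varepsilon \le \bar u$, which yields $u^\varepsilon \in L^\infty(\R^n\times[0,T])$ and continuity down to $t=0$.

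Next I would set up a monotone iteration. On the bounded range $\bigl[\inf\underline u - \sup\varphi,\ \sup\bar u - \inf\varphi\bigr]$ the function $\beta_\varepsilon$ is $C^1$ with $|\beta_\varepsilon'|$ bounded by some constant $\Lambda_\varepsilon$. For $w \in L^\infty$ with $\underline u \le w \le \bar u$, let $\mathcal T[w]$ be the unique bounded solution of the \emph{linear} problem $\p_t v + Lv + \Lambda_\varepsilon v = \beta_\varepsilon(w-\varphi) + \Lambda_\varepsilon w$ in $\R^n\times(0,T)$ with $v(\cdot,0) = \varphi + \sqrt\varepsilon$; existence, uniqueness, and comparison for $\mathcal T$ follow from Theorem~\ref{thm:kernel_estimates} after the change of unknown $v = e^{-\Lambda_\varepsilon t}\tilde v$ and Duhamel's formula, since the forcing term is bounded. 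Because $w \mapsto \beta_\varepsilon(w-\varphi) + \Lambda_\varepsilon w$ has derivative $\beta_\varepsilon' + \Lambda_\varepsilon \ge 0$, the map $\mathcal T$ is monotone, and the supersolution and subsolution properties of $\bar u,\underline u$ translate into $\mathcal T[\bar u] \le \bar u$ and $\mathcal T[\underline u] \ge \underline u$. Hence the iterates $u_0 := \bar u$, $u_{k+1} := \mathcal T[u_k]$ are nonincreasing with $\underline u \le u_k \le \bar u$ and converge pointwise to some $u^\varepsilon$. The uniform $L^\infty$ bounds on the forcing terms and the interior estimates of Proposition~\ref{prop:interior_alphabeta} give enough equicontinuity to pass to the limit in the Duhamel representation, so $u^\varepsilon = \mathcal T[u^\varepsilon]$, i.e.\ $u^\varepsilon$ solves the penalized problem; it is continuous up to $t=0$ by the barriers. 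Uniqueness is then immediate from Lemma~\ref{lem:comparison_beta} with $f = g = 0$, $\varphi = \psi$, $u_0 = v_0 = \varphi + \sqrt\varepsilon$.

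The main obstacle will be the passage to the limit in the unbounded domain: one must ensure that the linear estimates used to justify $u^\varepsilon = \mathcal T[u^\varepsilon]$ are uniform in $k$ (they are, since the forcings $\beta_\varepsilon(u_k-\varphi) + \Lambda_\varepsilon u_k$ are uniformly bounded) and that the monotone pointwise limit is genuinely a Duhamel—equivalently a viscosity—solution. An alternative, which I would fall back on if the compactness argument gets delicate, is to run Perron's method directly: take $u^\varepsilon := \sup\{w : w \text{ subsolution},\ \underline u \le w \le \bar u\}$ and use Lemma~\ref{lem:comparison_beta} for the usual bump construction showing $u^\varepsilon$ is both a sub- and a supersolution. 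Either way, the crux is the interplay between the a priori $L^\infty$ control from the barriers and the regularizing effect of the heat kernel of Theorem~\ref{thm:kernel_estimates}.
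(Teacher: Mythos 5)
Your proposal is correct, and your \emph{fallback} is in fact the paper's own proof: the paper constructs the constant barriers $u_-=-\|\varphi\|_{L^\infty(\R^n)}$ and $u_+=\|\varphi\|_{L^\infty(\R^n)}+\sqrt{\varepsilon}+t$, runs Perron's method ($u^*=\inf$ of supersolutions), upgrades $u^*$ to a classical solution by interior regularity, and deduces uniqueness from Lemma~\ref{lem:comparison_beta}, exactly as you sketch in your last paragraph. Your primary route is genuinely different: you take the time-dependent barriers $\varphi+\sqrt{\varepsilon}\pm C_\varphi t$ (your verification that they are respectively a super- and a subsolution is correct, since $\beta_\varepsilon$ is positive, decreasing, and $\beta_\varepsilon(\sqrt{\varepsilon}+C_\varphi t)\le e^{-1/\sqrt{\varepsilon}}\le 1$), and then build the solution by monotone iteration after adding a zero-order term $\Lambda_\varepsilon\ge\sup|\beta_\varepsilon'|$ on the compact range of $w-\varphi$, solving each linear step by Duhamel with the kernel of Theorem~\ref{thm:kernel_estimates} and passing to the monotone limit via the uniform bound on the forcings and the equicontinuity from Proposition~\ref{prop:interior_alphabeta}. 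What your route buys: the barriers attach to the initial datum, so continuity at $t=0$ comes with an explicit Lipschitz-in-$t$ rate, and the construction only uses linear theory plus comparison for the linear equation (positivity of the heat kernel), avoiding the Perron machinery. What it costs is the identification step you correctly flag as delicate --- monotone limit of mild solutions $\Rightarrow$ viscosity (and then, by bootstrapping the interior estimates, classical for $t>0$) solution of the nonlinear problem --- which is standard but requires the same kind of care the paper conceals in ``it can be checked that $u^*$ is a solution in the viscosity sense''. One caveat you share with the paper: Lemma~\ref{lem:comparison_beta} is stated for uniformly Lipschitz, bounded solutions, so uniqueness in the class $C(\R^n\times[0,T])\cap L^\infty$ strictly needs the intermediate step of upgrading an arbitrary bounded viscosity solution by interior regularity before the comparison lemma applies; with that understood, your ``immediate'' conclusion is fine.
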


\begin{proof}[Sketch of the proof]
The proof follows the standard techniques in viscosity solutions, see \cite{IS12} for a detailed explanation in the case of local operators.

To see existence, we construct a bounded continuous subsolution and supersolution, and then we will take the infimum of all supersolutions as our solution.

It is easy to check that $u_-(x,t) = -\|\varphi\|_{L^\infty(\R^n)}$ is a subsolution. Indeed, 
$$u_-(\cdot,0) \leq \varphi + \sqrt{\varepsilon} \quad \text{and} \quad (\p_t+L)u_- - \beta_\varepsilon(u_- - \varphi) = -\beta_\varepsilon(u_- - \varphi) \leq 0.$$

On the other hand, $u_+(x,t) = \|\varphi\|_{L^\infty(\R^n)} + \sqrt{\varepsilon} + t$ is a supersolution. The initial condition is immediately fulfilled, and
$$(\p_t+L)u_+ - \beta_\varepsilon(u_+ - \varphi) = 1 - \beta_\varepsilon(u_+ - \varphi) \geq 1 - \beta_\varepsilon(\sqrt{\varepsilon}) = 1 - e^{-1/\sqrt{\varepsilon}} > 0.$$

Then, we can apply the standard procedure for viscosity solutions and define
$$u^*(x,t) := \inf \{u(x,t) | \ u \text{ is a supersolution} \},$$
and then it can be checked that $u^*$ is a solution in the viscosity sense. Furthermore, $u_- \leq u^* \leq u_+$.

By interior regularity, such solution $u^*$ is a classical solution, and thus uniqueness follows from Lemma \ref{lem:comparison_beta}.
\end{proof}

Then, we prove some basic properties of solutions to this problem. The following lemma is analogous to the first part of \cite[Lemma 3.3]{CF13} for our case, and the proof is very similar.

\begin{lem}\label{lem:sub_penaliz}
Let $L$ be an operator satisfying (\ref{eq:operator}) and (\ref{eq:operator_elliptic}), let $\varphi \in C^{2,1}_c(\R^n)$ and let $u^\varepsilon$ be the solution of (\ref{eq:penalized}).

Then,
$$\beta_\varepsilon(u^\varepsilon - \varphi) \leq \max\{1,\|L\varphi\|_{L^\infty(\R^n)}\}.$$
In particular,
$$u^\varepsilon - \varphi \geq -\varepsilon\ln^+\|L\varphi\|_{L^\infty(\R^n)}.$$
\end{lem}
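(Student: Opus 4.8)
The plan is to exhibit an explicit stationary subsolution of the penalized problem of the form $\underline{u}=\varphi+c$, with $c\le 0$ a constant chosen so that $\beta_\varepsilon(c)$ dominates $L\varphi$, and then conclude with the comparison principle for the penalized equation, Lemma \ref{lem:comparison_beta}.

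Concretely, set $M:=\max\{1,\|L\varphi\|_{L^\infty(\R^n)}\}$, which is finite because $\varphi\in C^{2,1}_c(\R^n)$ forces $L\varphi\in L^\infty(\R^n)$ (this is the computation already carried out in the proof of Proposition \ref{prop:global_lip}), and let $c:=-\varepsilon\ln M\le 0$. First I would check that $\underline{u}:=\varphi+c$ is an exact solution of the penalized equation with obstacle $\varphi$ and a nonpositive forcing term, namely
$$\p_t\underline{u}+L\underline{u}=L\varphi=\beta_\varepsilon(\underline{u}-\varphi)+\bigl(L\varphi-M\bigr)\quad\text{in}\quad \R^n\times(0,T),$$
where I used $\p_t\underline{u}=0$, $Lc=0$ and $\beta_\varepsilon(c)=e^{-c/\varepsilon}=M$. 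Since $L\varphi-M\le 0$ everywhere and $\underline{u}(\cdot,0)=\varphi+c\le\varphi\le\varphi+\sqrt{\varepsilon}=u^\varepsilon(\cdot,0)$, Lemma \ref{lem:comparison_beta} (applied with obstacles $\varphi\le\varphi$, forcings $L\varphi-M\le 0$, and the indicated initial data) gives $\underline{u}\le u^\varepsilon$ in $\R^n\times(0,T)$, that is, $u^\varepsilon-\varphi\ge c=-\varepsilon\ln M$.

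To finish, I would use that $\beta_\varepsilon$ is strictly decreasing, so $\beta_\varepsilon(u^\varepsilon-\varphi)\le\beta_\varepsilon(c)=M=\max\{1,\|L\varphi\|_{L^\infty(\R^n)}\}$, which is the first assertion. The ``in particular'' statement then follows at once since $-\varepsilon\ln M=-\varepsilon\ln^+\|L\varphi\|_{L^\infty(\R^n)}$: if $\|L\varphi\|_{L^\infty(\R^n)}\ge 1$ then $M=\|L\varphi\|_{L^\infty(\R^n)}$ and $\ln M=\ln^+\|L\varphi\|_{L^\infty(\R^n)}$, while if $\|L\varphi\|_{L^\infty(\R^n)}<1$ then $M=1$ and both quantities vanish.

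There is no real difficulty here; the only point deserving attention is the bookkeeping when invoking Lemma \ref{lem:comparison_beta} --- one must view $\underline{u}$ as an honest (Lipschitz, bounded) solution of the penalized problem with obstacle $\varphi$ and forcing $f=L\varphi-M$, and check the three hypotheses $u_0\le v_0$, $\varphi\le\psi$ and $f\le g=0$, all of which are built into the choice of $c$. Alternatively, one could avoid the lemma and rerun its proof directly, comparing $\underline{u}$ and $u^\varepsilon$ at a would-be negative minimum of $w(x,t)=u^\varepsilon-\varphi-c+\delta p(x)+\delta/(T-t)$ with $p(x)=(1+|x|)^s$, but citing Lemma \ref{lem:comparison_beta} is cleaner.
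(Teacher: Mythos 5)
Your proof is correct, but it takes a genuinely different route from the paper. You build the explicit stationary competitor $\underline{u}=\varphi-\varepsilon\ln M$ (an exact solution of the penalized equation with nonpositive forcing $L\varphi-M$) and invoke Lemma \ref{lem:comparison_beta} as a black box, obtaining first the lower bound $u^\varepsilon-\varphi\geq -\varepsilon\ln M$ and then the bound on $\beta_\varepsilon$ by monotonicity. The paper instead argues directly at the level of $u^\varepsilon-\varphi$: it adds the penalization $\delta/(T-t)+\delta(1+|x|)^s$ to force an interior minimum, evaluates $\partial_t$ and $L$ there, and reads off the bound on $\beta_\varepsilon$ from the equation as $\delta\to0$ --- in effect re-running the proof of Lemma \ref{lem:comparison_beta} rather than citing it, which has the small advantage of only using that $u^\varepsilon$ is bounded and smooth in the interior, not the global Lipschitz hypothesis of that lemma. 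Your version is cleaner and offloads the noncompactness issue to the already-proven comparison result; its only extra cost is the bookkeeping you already flag, namely that $u^\varepsilon$ (bounded, and regular by Proposition \ref{prop:classic_penaliz} and interior estimates) qualifies as an admissible solution in Lemma \ref{lem:comparison_beta} --- the same implicit step the paper takes when it applies that lemma to $u^\varepsilon$ for uniqueness and for Theorem \ref{thm:comparison_obstacle}. One cosmetic point: the boundedness and Lipschitz character of $L\varphi$ for a general $s\in(0,1)$ should be justified via the evenness of the kernel and $\varphi\in C^{2,1}_c$ (second-order incremental quotients), rather than by the computation in Proposition \ref{prop:global_lip}, which uses $s<\tfrac12$; this is immaterial in the paper's regime but worth stating precisely.
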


\begin{proof}
If $u^\varepsilon \geq \varphi$ everywhere, then $\beta_\varepsilon \leq 1$ and there is nothing to prove. Assume then otherwise, i.e., $\inf\limits_{\R^n\times[0,T]}(u^\varepsilon - \varphi) < 0$.

Then, since $u^\varepsilon \in L^\infty(\R^n\times(0,T))$, if $p(x) = (1+|x|)^s$ as in Lemma \ref{lem:comparison_beta}, for any $\delta > 0$ the function
$$w = u^\varepsilon - \varphi + \frac{\delta}{T - t} + \delta p$$
has a minimum point $(x_\varepsilon^\delta, t_\varepsilon^\delta) \in \R^n\times[0,T]$. Furthermore, if $\delta$ is small enough, $w(x_\varepsilon^\delta, t_\varepsilon^\delta) < 0$, and it follows that $t_\varepsilon^\delta \in (0,T)$. Hence, since the point is interior and $u^\varepsilon$ is smooth, then $\p_t w(x_\varepsilon^\delta, t_\varepsilon^\delta) = 0$ and $Lw(x_\varepsilon^\delta, t_\varepsilon^\delta) \leq 0$, which combined with the penalized equation (\ref{eq:penalized}) yields
$$\beta_\varepsilon(u^\varepsilon - \varphi)(x_\varepsilon^\delta, t_\varepsilon^\delta) \leq L\varphi(x_\varepsilon^\delta) - \frac{\delta}{(T-t_\varepsilon^\delta)^2} -\delta Lp(x_\varepsilon^\delta) \leq \|L\varphi\|_{L^\infty(\R^n)} + C\delta.$$

Finally, since $\beta_\varepsilon$ is decreasing and $(u^\varepsilon - \varphi)(x_\varepsilon^\delta, t_\varepsilon^\delta) \rightarrow \inf\limits_{\R^n\times[0,T]}(u^\varepsilon - \varphi)$ as $\delta \rightarrow 0$, we obtain that
$$\sup\limits_{\R^n\times[0,T]}\beta_\varepsilon(u^\varepsilon-\varphi) \leq \|L\varphi\|_{L^\infty(\R^n)},$$
as wanted.
\end{proof}

We can also prove an upper bound for $u^\varepsilon$.

\begin{lem}\label{lem:super_penaliz}
Let $L$ be an operator satisfying (\ref{eq:operator}) and (\ref{eq:operator_elliptic}), let $\varphi \in C^{2,1}_c(\R^n)$ and let $u^\varepsilon$ be the solution of (\ref{eq:penalized}).

Then,
$$u^\varepsilon(\cdot, t) - \varphi \leq \sqrt{\varepsilon} +  2t\max\{1,\|L\varphi\|_{L^\infty(\R^n)}\}.$$
\end{lem}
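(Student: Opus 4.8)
The plan is to produce an explicit barrier lying above $\varphi$ and to conclude by the comparison principle of Lemma~\ref{lem:comparison_beta}. Set $K:=\max\{1,\|L\varphi\|_{L^\infty(\R^n)}\}$ and define
$$\psi(x,t):=\varphi(x)+\sqrt{\varepsilon}+2Kt.$$
Since $\psi$ differs from $\varphi$ only by a function of $t$, and $L$ annihilates functions of $t$ alone, we have $L\psi=L\varphi$, hence $(\p_t+L)\psi=2K+L\varphi\ge 2K-\|L\varphi\|_{L^\infty(\R^n)}\ge K$ on $\R^n\times(0,T)$. On the other hand $\psi-\varphi=\sqrt{\varepsilon}+2Kt\ge\sqrt{\varepsilon}>0$, so, $\beta_\varepsilon$ being decreasing and nonnegative, $\beta_\varepsilon(\psi-\varphi)\le\beta_\varepsilon(\sqrt{\varepsilon})=e^{-1/\sqrt{\varepsilon}}\le 1\le K$. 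Therefore $(\p_t+L)\psi\ge\beta_\varepsilon(\psi-\varphi)$; equivalently, $\psi$ solves
$$\p_t\psi+L\psi=\beta_\varepsilon(\psi-\varphi)+g,\qquad g:=2K+L\varphi-\beta_\varepsilon(\psi-\varphi)\ge 0,$$
with initial datum $\psi(\cdot,0)=\varphi+\sqrt{\varepsilon}=u^\varepsilon(\cdot,0)$.

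It then remains to apply Lemma~\ref{lem:comparison_beta} to the pair $(u^\varepsilon,\psi)$, taking the same obstacle $\varphi$ for both, source $f\equiv 0$ for $u^\varepsilon$ and source $g\ge 0$ for $\psi$, and the common initial datum $\varphi+\sqrt{\varepsilon}$. The hypotheses are met: $\varphi\in C^{2,1}_c(\R^n)$ makes both $\varphi$ and $L\varphi$ uniformly Lipschitz and bounded, so $g$ is uniformly Lipschitz and bounded; $\psi$ is a bounded classical solution of its equation; and $u^\varepsilon$ is a bounded solution by Proposition~\ref{prop:classic_penaliz}. Since $f\equiv 0\le g$ and the obstacles and initial data coincide, Lemma~\ref{lem:comparison_beta} gives $u^\varepsilon\le\psi$ on $\R^n\times(0,T)$, i.e. $u^\varepsilon(\cdot,t)-\varphi\le\sqrt{\varepsilon}+2Kt$, which is exactly the claim.

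This is a routine barrier argument with no genuine obstacle; it is the mirror image of the lower bound in Lemma~\ref{lem:sub_penaliz}, now using a barrier from above rather than below. The only point worth noting is the choice of the slope $2K$: one copy of $K$ is spent dominating $-L\varphi$ inside $(\p_t+L)\psi$, and the second copy absorbs the trivial bound $\beta_\varepsilon\le 1$, so that $(\p_t+L)\psi\ge\beta_\varepsilon(\psi-\varphi)$ holds for \emph{all} $t\in(0,T)$; this is what makes the estimate uniform in time.
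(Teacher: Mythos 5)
Your proof is correct, but it follows a slightly different route from the paper's. The paper first invokes the bound $\beta_\varepsilon(u^\varepsilon-\varphi)\le\max\{1,\|L\varphi\|_{L^\infty(\R^n)}\}$ from Lemma~\ref{lem:sub_penaliz} along the actual solution, so that $(\p_t+L)(u^\varepsilon-\varphi)\le 2K$ with $K=\max\{1,\|L\varphi\|_{L^\infty(\R^n)}\}$, and then applies only the \emph{linear} comparison principle to $w=u^\varepsilon-\varphi-\sqrt{\varepsilon}-2Kt$, which vanishes at $t=0$ and satisfies $(\p_t+L)w\le 0$. You instead bypass Lemma~\ref{lem:sub_penaliz} entirely: since your barrier satisfies $\psi-\varphi\ge\sqrt{\varepsilon}$ by construction, the nonlinear term is controlled for free ($\beta_\varepsilon(\psi-\varphi)\le 1\le K$), and you conclude via the \emph{nonlinear} comparison principle of Lemma~\ref{lem:comparison_beta} with $f\equiv 0\le g$, equal obstacles and equal initial data. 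What each approach buys: yours is self-contained modulo Lemma~\ref{lem:comparison_beta} and does not need the a priori estimate on $\beta_\varepsilon(u^\varepsilon-\varphi)$, while the paper's version only needs comparison for the linear nonlocal heat equation between classical solutions, so it places slightly weaker demands on the regularity of $u^\varepsilon$. The only point you lean on implicitly is that $u^\varepsilon$ is uniformly Lipschitz and bounded, as required by Lemma~\ref{lem:comparison_beta}; boundedness is in Proposition~\ref{prop:classic_penaliz} and the Lipschitz regularity follows from the interior estimates in Lemma~\ref{lem:diff_penaliz}, and the paper makes the same implicit use of this when proving Theorem~\ref{thm:comparison_obstacle}, so this is a cosmetic remark rather than a gap.
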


\begin{proof}
First, let us compute
$$(\p_t+L)(u^\varepsilon - \varphi) = \beta_\varepsilon(u^\varepsilon - \varphi) - L\varphi \leq \max\{1,\|L\varphi\|_{L^\infty(\R^n)}\} + \|L\varphi\|_{L^\infty(\R^n)},$$
where we used Lemma \ref{lem:sub_penaliz} to estimate $\beta_\varepsilon$.

Therefore, if we define
$$w(x,t) = u^\varepsilon(x,t) - \varphi(x) - \sqrt{\varepsilon} - 2t\max\{1,\|L\varphi\|_{L^\infty(\R^n)}\},$$
we get that $w(\cdot,0) \equiv 0$ and that $w$ is a subsolution, $(\p_t + L)w \leq 0$, by construction, and then the comparison principle for classical solutions of the nonlocal parabolic equation yields the result.
\end{proof}

We also need to see that we can differentiate the penalized problem.
\begin{lem}\label{lem:diff_penaliz}
Let $L$ be an operator satisfying (\ref{eq:operator}) and (\ref{eq:operator_elliptic}), let $\varphi \in C^{2,1}_c(\R^n)$ and let $u^\varepsilon$ be the solution to the penalized problem (\ref{eq:penalized}). Then, given any unit vector $\nu \in \R^n\times\R$,
\begin{align*}
(\p_t+L)u^\varepsilon_\nu &= \beta_\varepsilon'(u^\varepsilon-\varphi)(u^\varepsilon_\nu - \varphi_\nu),\\
(\p_t+L)u^\varepsilon_{\nu\nu} &= \beta_\varepsilon'(u^\varepsilon-\varphi)(u^\varepsilon_{\nu\nu} - \varphi_{\nu\nu}) + \beta_\varepsilon''(u^\varepsilon-\varphi)(u^\varepsilon_\nu - \varphi_\nu)^2,\\
u^\varepsilon_t(\cdot,0) &= -L\varphi + e^{-1/\sqrt{\varepsilon}},\\
u^\varepsilon_{tt}(\cdot,0) &= L^2\varphi - \frac{1}{\varepsilon}e^{-1/\sqrt{\varepsilon}}(e^{-1/\sqrt{\varepsilon}} - L\varphi),
\end{align*}
where the two last expressions must be understood in the sense of the uniform limit as $t \rightarrow 0^+$.
\end{lem}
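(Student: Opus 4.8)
The plan is to upgrade $u^\varepsilon$ to a classical solution, differentiate the penalized equation directly, and then read off the boundary values at $t=0$ by evaluating the differentiated equations on the initial slice. I do not expect any monotonicity or comparison input beyond what is already available; the content is calculus plus parabolic regularity.

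First I would record that $u^\varepsilon$ is smooth enough to differentiate twice. Since $\beta_\varepsilon$ is real-analytic and $\varphi \in C^{2,1}_c(\R^n)$, the right-hand side $\beta_\varepsilon(u^\varepsilon - \varphi)$ inherits the regularity of $u^\varepsilon - \varphi$ up to that of $\varphi$. Starting from the fact that $u^\varepsilon$ is bounded and globally Lipschitz — the comparison argument of Proposition \ref{prop:global_lip} applies verbatim to the penalized equation, or one invokes Lemma \ref{lem:comparison_beta} — a bootstrap with the interior estimates of Proposition \ref{prop:interior_alphabeta} yields $u^\varepsilon \in C^{2,1}_{loc}(\R^n\times(0,T))$ in $(x,t)$; here $L\varphi \in C^{2,1-2s}$ and $L^2\varphi$ are well defined precisely because $s<\tfrac12$. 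In particular every derivative appearing in the statement exists classically for $t>0$.

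Next I would differentiate the equation. Both $\p_t$ and $L$ are translation invariant, hence commute with $\p_\nu$ for any fixed unit $\nu\in\R^n\times\R$; writing $\nu=(\nu',\nu_{n+1})$ and using that $\varphi$ is $t$-independent (so $\p_\nu\varphi=\nu'\cdot\nabla\varphi=\varphi_\nu$ and $\p_{\nu\nu}\varphi=\varphi_{\nu\nu}$), applying $\p_\nu$ to $\p_t u^\varepsilon+Lu^\varepsilon=\beta_\varepsilon(u^\varepsilon-\varphi)$ and then $\p_\nu$ once more, via the chain and product rules on the right, produces exactly
$$(\p_t+L)u^\varepsilon_\nu = \beta_\varepsilon'(u^\varepsilon-\varphi)(u^\varepsilon_\nu-\varphi_\nu),$$
$$(\p_t+L)u^\varepsilon_{\nu\nu} = \beta_\varepsilon'(u^\varepsilon-\varphi)(u^\varepsilon_{\nu\nu}-\varphi_{\nu\nu}) + \beta_\varepsilon''(u^\varepsilon-\varphi)(u^\varepsilon_\nu-\varphi_\nu)^2.$$
To make this rigorous I would pass through incremental quotients in $x$ and in $t$, which solve the same type of equation by the mean value theorem applied to $\beta_\varepsilon$ and converge in $C^{1,1}_{loc}$ by the previous step; a general $\nu$ is then handled by linearity from the pure-$x$ and pure-$t$ cases.

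Finally, for the values at $t=0$ I would use the Duhamel formula $u^\varepsilon(\cdot,t)=p_t*(\varphi+\sqrt{\varepsilon})+\int_0^t p_{t-\zeta}*\beta_\varepsilon(u^\varepsilon-\varphi)(\cdot,\zeta)\,d\zeta$ with $p_t$ from Theorem \ref{thm:kernel_estimates}. Because $\varphi+\sqrt{\varepsilon}\in C^{2,1}_c$ and $p_t$ is an approximation of the identity together with its first two spatial derivatives (Corollary \ref{cor:heat_kernel_aproxid} applied to $\varphi,\nabla\varphi,D^2\varphi$), the transported datum tends to $\varphi+\sqrt{\varepsilon}$ in $C^2_{loc}$, and differentiating Duhamel in $t$ shows $u^\varepsilon_t,u^\varepsilon_{tt}$ extend continuously to $\{t=0\}$: the $t$-derivative of the transported datum tends to $-L\varphi$ and the integral term contributes its integrand at $\zeta=t$, namely $\beta_\varepsilon(\sqrt{\varepsilon})$, as $t\to0^+$. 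Evaluating the equation at $t=0$, using $L(\mathrm{const})=0$ and $\beta_\varepsilon(\sqrt{\varepsilon})=e^{-1/\sqrt{\varepsilon}}$, gives $u^\varepsilon_t(\cdot,0)=-L\varphi+e^{-1/\sqrt{\varepsilon}}$; evaluating the $\nu=e_{n+1}$ case of the differentiated equation, $u^\varepsilon_{tt}=-Lu^\varepsilon_t+\beta_\varepsilon'(u^\varepsilon-\varphi)u^\varepsilon_t$, at $t=0$ with $\beta_\varepsilon'(\sqrt{\varepsilon})=-\tfrac1\varepsilon e^{-1/\sqrt{\varepsilon}}$ gives $u^\varepsilon_{tt}(\cdot,0)=L^2\varphi-\tfrac1\varepsilon e^{-1/\sqrt{\varepsilon}}(e^{-1/\sqrt{\varepsilon}}-L\varphi)$. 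The main obstacle is exactly this behaviour up to $\{t=0\}$: the interior bootstrap of the first step degenerates as $t\to0^+$, so the identities cannot simply be evaluated on the initial slice; one must extract the up-to-$t=0$ regularity from the mild formulation, where the transported $C^{2,1}$ datum and the $C^1$-in-$t$ Duhamel integral deliver the claimed uniform limits. Everything else is routine.
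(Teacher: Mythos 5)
Your proposal follows essentially the same architecture as the paper's proof: a bootstrap of the interior estimates (Proposition \ref{prop:interior_alphabeta}) upgrades $u^\varepsilon$ to a classical solution, so that the two differentiated equations follow from translation invariance and the chain rule, and the values at $t=0$ are then read off from Duhamel's formula with the kernel of Theorem \ref{thm:kernel_estimates}. The only real divergence, and the place where your sketch is thinner than the paper's argument, is the extraction of the initial values. The paper first rewrites the Duhamel integral as $\int_0^t p_\tau*\beta_\varepsilon\big(u^\varepsilon(\cdot,t-\tau)-\varphi\big)\,\mathrm{d}\tau$, so that the $t$-derivative falls on $u^\varepsilon$ inside $\beta_\varepsilon$, and the resulting term is controlled by a Gronwall inequality for $\|u^\varepsilon_t(\cdot,t)\|_{L^\infty}$; for $u^\varepsilon_{tt}$ it differentiates Duhamel a second time and runs Gronwall again, which yields the uniform limits directly. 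In your version, ``the integral term contributes its integrand at $\zeta=t$'' hides the remaining term $\int_0^t \p_t p_{t-\zeta}*\beta_\varepsilon\,\mathrm{d}\zeta$, which is singular near $\zeta=t$ (since $\|Lp_\tau\|_{L^1(\R^n)}\sim \tau^{-1}$) and is seen to vanish as $t\to0^+$ only after invoking a uniform-in-$t$ spatial H\"older or Lipschitz bound on $\beta_\varepsilon(u^\varepsilon-\varphi)$ up to $t=0$; moreover, your evaluation of $u^\varepsilon_{tt}(\cdot,0)$ through $u^\varepsilon_{tt}=-Lu^\varepsilon_t+\beta_\varepsilon'(u^\varepsilon-\varphi)u^\varepsilon_t$ requires $Lu^\varepsilon_t(\cdot,t)\to L^2\varphi$ uniformly, which does not follow from the uniform convergence of $u^\varepsilon_t$ alone: you must return to the Duhamel representation of $u^\varepsilon_t$ (or differentiate it once more, as the paper does) to justify passing $L$ to the limit. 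Both points are fixable along the lines you indicate, so I would describe this as the same proof with the quantitative steps (the change of variables and the Gronwall bounds) left implicit, rather than a genuinely different argument; making those two steps explicit would complete it.
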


\begin{proof}
We will iterate Proposition \ref{prop:interior_alphabeta}. First, $u^\varepsilon \in L^\infty(\R^n\times(0,T))$ by Lemmas \ref{lem:sub_penaliz} and \ref{lem:super_penaliz}. Then, observe that $\beta_\varepsilon(u^\varepsilon - \varphi) \in L^\infty$ as well.

Let $W = W_x\times[t_1,t_2]$ be a compact cylinder in $\R^n\times(0,T)$. Then, by Proposition \ref{prop:interior_alphabeta} and a covering argument, $\|u^\varepsilon\|_{C^{2s-\delta}_xC^{1-\delta}_t(W)} \leq C$ for a small $\delta > 0$ to be chosen later. Since $W$ is arbitrary, 
$$u^\varepsilon \in C^{2s-\delta}_xC^{1-\delta}_t(\R^n\times(0,T)),$$
and, since the previous estimates where invariant with respect to translations in $x$,
$$\|u^\varepsilon\|_{C^{2s-\delta}_xC^{1-\delta}_t(\R^n\times[t_1,t_2])} \leq C(t_1,t_2),$$
for any $0 < t_1 < t_2 < T$.

Now, repeating the same argument $k$ times we obtain that
$$\|u^\varepsilon\|_{C^{3,2s-\delta}_xC^{k(1-\delta)}_t(\R^n\times[t_1,t_2])} \leq C(t_1,t_2),$$
provided that $k$ is large enough. The cap in the $x$ regularity comes from the fact that $\varphi \in C^{2,1}_c$ and then $\beta_\varepsilon(u^\varepsilon - \varphi)$ cannot attain further regularity in $x$.

In particular, $u^\varepsilon \in C^3(\R^n\times(0,T))$, it is a classical solution, and then $u^\varepsilon_\nu$ and $u^\varepsilon_{\nu\nu}$ are at least $C^1$ in $\R^n\times(0,T)$, and they are also bounded for each $t \in (0,T)$, and therefore they are also classical solutions of their respective equations.

For the initial conditions, we recover the expression of $u^\varepsilon$ from Duhamel's formula,
$$u^\varepsilon = p_t * \varphi + \sqrt{\varepsilon} + \int_0^t p_\tau*\big(\beta(u^\varepsilon(\cdot,t-\tau)-\varphi)\big) \mathrm{d}\tau,$$
and then differentiate it with respect to $t$ to get
$$u^\varepsilon_t = \p_t p_t * \varphi + p_t * \left.\beta\right|_{t = 0} + \int_0^tp_\tau*\big(\beta'(u^\varepsilon(\cdot,t-\tau)-\varphi)u^\varepsilon_t(\cdot,t-\tau)\big)\mathrm{d}\tau.$$
Then, notice that $\p_t p_t = -L p_t$ because $p_t$ is a solution, and it follows that\linebreak $\p_tp_t*\varphi = p_t*(-L\varphi)$. Furthermore, $\beta(u^\varepsilon - \varphi) \equiv e^{-1/\sqrt{\varepsilon}}$ at $t = 0$, so putting everything together,
\begin{equation}\label{eq:u_penaliz_t_duhamel}
    u^\varepsilon_t = -p_t * (L\varphi) + e^{-1/\sqrt{\varepsilon}} -\frac{1}{\varepsilon}\int_0^tp_\tau * \big(\beta (u^\varepsilon(\cdot,t-\tau)-\varphi)u^\varepsilon_t(\cdot,t-\tau)\big)\mathrm{d}\tau.
\end{equation}

Since $p_t$ is an approximation to the identity (see Corollary \ref{cor:heat_kernel_aproxid}) and $\beta$ is bounded by Lemma \ref{lem:sub_penaliz}, taking the $L^\infty$ norm we can conclude that
$$\|u^\varepsilon_t(\cdot,t)\|_{L^\infty(\R^n)} \leq C_1 + C_2\int_0^t\|u^\varepsilon_t(\cdot,\tau)\|_{L^\infty(\R^n)}\mathrm{d}\tau,$$
which implies by the Gronwall inequality that $u^\varepsilon_t \in L^\infty(\R^n\times(0,t))$.

Then, again by (\ref{eq:u_penaliz_t_duhamel}), since $\beta$ and $u^\varepsilon_t$ are bounded, $L\varphi$ is uniformly $C^2$ and $p_t$ is an approximation to the identity, it follows that $u^\varepsilon_t \rightarrow -L\varphi + e^{-1/\sqrt{\varepsilon}}$ uniformly as $t \rightarrow 0^+$.

For the last identity, we first differentiate (\ref{eq:u_penaliz_t_duhamel}) with respect to time to obtain
\begin{equation}\label{eq:u_penaliz_tt_duhamel}
    u^\varepsilon_{tt} = -\p_tp_t*(L\varphi) - \frac{1}{\varepsilon}p_t*\left.(\beta u^\varepsilon_t)\right|_{t=0} - \frac{1}{\varepsilon}\int_0^tp_\tau*(\beta'(u^\varepsilon_t)^2 + \beta u^\varepsilon_{tt})(\cdot,t-\tau)\mathrm{d}\tau.
\end{equation}

Now, by the same arguments used to simplify (\ref{eq:u_penaliz_t_duhamel}),
$$u^\varepsilon_{tt} = p_t * \left(L^2\varphi -\frac{1}{\varepsilon}e^{-1/\sqrt{\varepsilon}}(-L\varphi+e^{-1/\sqrt{\varepsilon}})\right)- \frac{1}{\varepsilon}\int_0^tp_\tau*(\beta'(u^\varepsilon_t)^2 + \beta u^\varepsilon_{tt})(\cdot,t-\tau)\mathrm{d}\tau,$$
and then using the boundedness of $u^\varepsilon_t$ and a Gronwall inequality, analogously to what we did with $u^\varepsilon_t$, 
$$u^\varepsilon_{tt} \rightarrow L^2\varphi -\frac{1}{\varepsilon}e^{-1/\sqrt{\varepsilon}}(-L\varphi+e^{-1/\sqrt{\varepsilon}}),$$
uniformly as $t \rightarrow 0^+$.
\end{proof}

Finally, we prove that the solutions to the penalized problem converge to the solution to the obstacle problem.

\begin{proof}[Proof of Lemma \ref{lem:penalization}]
Let $\varepsilon \in (0,1)$.

\textit{Step 1.} First, recall the $L^\infty$ estimates for $u^\varepsilon - \varphi$. From Lemmas \ref{lem:sub_penaliz} and \ref{lem:super_penaliz},
$$-\varepsilon\ln^+\|L\varphi\|_{L^\infty(\R^n)} \leq u^\varepsilon - \varphi \leq \sqrt{\varepsilon}  + 2t\max\{1,\|L\varphi\|_{L^\infty(\R^n)}\}.$$

Now we use interior estimates and Arzelà-Ascoli to show that $u^\varepsilon \rightarrow u^0$ locally uniformly along a subsequence.

Let $W \subset\subset \R^n\times(0,T)$. Then, we can apply a version of \cite[Theorem 1.3]{FR17} to obtain
$$\|u^\varepsilon\|_{C^{1 - \delta}_t(W)} + \|u^\varepsilon\|_{C^{2s(1-\delta)}_x(W)} \leq C\big(\|u^\varepsilon\|_{L^\infty(\R^n\times(0,T))} + \|\beta_\varepsilon(u^\varepsilon - \varphi)\|_{L^\infty(\R^n\times(0,T))}\big) \leq C,$$
with $C$ only depending on $W$, $\|L\varphi\|_{L^\infty(\R^n)}$, $\delta > 0$, the dimension, $s$, and the ellipticity constants, because of the previous $L^\infty$ estimates on $u^\varepsilon$ and $\beta_\varepsilon$.

Hence, choosing a suitable small $\delta$, by the compact inclusion of Hölder spaces and Arzelà-Ascoli, $u^{\varepsilon_k} \rightarrow u^0$ uniformly in $W$ for some subsequence $\varepsilon_k \rightarrow 0$.

Now, consider a sequence of compact sets $W_0 \subset W_1 \subset \ldots$ such that their union is $\R^n\times(0,T)$ and repeat the same reasoning above. By a standard diagonalization argument, we can construct a sequence $\varepsilon_k$ such that $u^{\varepsilon_k} \rightarrow u^0$ locally uniformly in $\R^n\times(0,T)$.

\textit{Step 2.} Putting it together, we want to prove that, for all $\kappa > 0$, $u^{\varepsilon_k} \rightarrow u^0$ also in the ${L^\infty([0,T-\kappa]\rightarrow L^1_s)}$ norm. To do it, let $\tau > 0$ to be chosen later. Then, we distinguish two cases. If $t < \tau$,
\begin{align*}
\|u^{\varepsilon_k}(\cdot,t) - u^0(\cdot,t)\|_{L^1_s} &\leq \|u^{\varepsilon_k}(\cdot,t) - \varphi\|_{L^1_s} + \|\varphi - u^0(\cdot,t)\|_{L^1_s}\\
&\leq 2\sup\limits_{m \geq k}\|u^{\varepsilon_m} - \varphi\|_{L^1_s} \leq 2C\sup\limits_{m \geq k}\|u^{\varepsilon_m} - \varphi\|_{L^\infty(\R^n)}\\
&< 2C\big(\sqrt{\varepsilon_k} + 2\tau\max\{1,\|L\varphi\|_{L^\infty(\R^n)}\}\big).
\end{align*}

On the other hand, if $t \geq \tau$ we use the locally uniform convergence of the sequence. Let $R > 0$. Then, for all $t \in [\tau,T-\kappa]$,
\begin{align*}
\|u^{\varepsilon_k}(\cdot,t) - u^0(\cdot,t)\|_{L^1_s} &\lesssim \|u^{\varepsilon_k}(\cdot,t) - u^0(\cdot,t)\|_{L^\infty(B_R)} + R^{-2s}\|u^{\varepsilon_k}(\cdot,t) - u^0(\cdot,t)\|_{L^\infty(B_R^c)}\\
&\lesssim \|u^{\varepsilon_k}(\cdot,t) - u^0(\cdot,t)\|_{L^\infty(B_R)} + 2R^{-2s}\sup\limits_{m \geq k}\|u^{\varepsilon_m}(\cdot,t)\|_{L^\infty(B_R^c)}\\
&\lesssim \|u^{\varepsilon_k}(\cdot,t) - u^0(\cdot,t)\|_{L^\infty(B_R)} + R^{-2s},
\end{align*}
and then the second term tends to zero as $R \rightarrow \infty$ and then the first term tends to zero as $k$ goes to infinity  by the local uniform convergence.

Therefore, choosing first $\tau$ small, then $R$ big and then $k$ big, $\|u^{\varepsilon_k}(\cdot, t) - u^0(\cdot,t)\|_{L^1_s}$ can be made arbitrarily small, as we wanted to see.

\textit{Step 3.} Then we prove that $u^0$ is the solution of (\ref{eq:prev_problem}).

First, from the lower bound $u^{\varepsilon_k} \geq \varphi - {\varepsilon_k}\ln^+\|L\varphi\|_{L^\infty(\R^n)}$, taking the limit ${\varepsilon_k} \rightarrow 0$ it becomes clear that $u^0 \geq \varphi$. Then $(\p_t+L)u^{\varepsilon_k} = \beta_{\varepsilon_k}(u^{\varepsilon_k} - \varphi) \geq 0$, and the uniform limit of viscosity supersolutions is also a supersolution (with the extra convergence assumption of Step 2), by \cite[Theorem 5.3]{CD14}.

Hence, we only need to check that $(\p_t+L)u^0 = 0$ in the set $\{u^0(x,t) > \varphi(x)\}$ in the viscosity sense. Again by \cite[Theorem 5.3]{CD14}, it suffices to check the following.

Consider a compact set $E \subset \{u^0(x,t) > \varphi(x)\}$. By the uniform convergence of $u^{\varepsilon_k}$ to $u_0$, there exist $\mu > 0$ and $k_0$ such that for all $k \geq k_0$, $u^{\varepsilon_k}(x,t) > \varphi(x) + \mu$, for all $(x,t) \in E$. Hence,
$$(\p_t+L)u^{\varepsilon_k}(x,t) = \beta_{\varepsilon_k}(u^{\varepsilon_k} - \varphi)(x,t) \in (0,e^{-\mu / {\varepsilon_k}}),$$
and the limit of the right hand side is zero when ${\varepsilon_k} \rightarrow 0$.

Finally, from the $L^\infty$ estimates in Lemmas \ref{lem:sub_penaliz} and \ref{lem:super_penaliz}, it follows the concordance of the initial conditions, $u^0(\cdot,0) = \varphi$, and the continuity of $u^0$ as $t \rightarrow 0^+$.

\textit{Step 4.} Using the uniqueness of the solution we can eliminate the need to consider subsequences. Indeed, for any $\varepsilon_n \downarrow 0$, we can repeat Steps 2 and 3 to obtain a subsequence $u^{\varepsilon_{n_j}}$ that converges locally uniformly to the solution of (\ref{eq:prev_problem}). Therefore, $u^\varepsilon \rightarrow u^0$ locally uniformly as well.
\end{proof}

\end{document}